\documentclass[12pt]{amsart}

\usepackage{amssymb, amsmath, xcolor}
\usepackage{import}
\usepackage{tikz}
\usepackage{array}
\tikzset{filled/.style={minimum width=5pt,inner sep=0pt,circle,fill=black}}
\usepackage{subcaption}
\usepackage{mathtools}
\usepackage[utf8]{inputenc}
\usepackage{float}
\usepackage{longtable}
\usepackage{todonotes}
\usepackage[square,numbers]{natbib}
\setcounter{MaxMatrixCols}{15}

\newtheorem{theorem}{Theorem}[section]
\newtheorem{lemma}[theorem]{Lemma}

\newtheorem{question}[theorem]{Question}

\theoremstyle{definition}
\newtheorem{definition}[theorem]{Definition}

\theoremstyle{remark}
\newtheorem{remark}[theorem]{Remark}

\numberwithin{equation}{section}
\numberwithin{figure}{section}

\renewcommand{\mod}{\operatorname{mod}}
\newcommand{\N}{\mathbb{N}}
\newcommand{\Z}{\mathbb{Z}}

\title{Neighborhood Balanced 3-Coloring}
\author[Minyard, Sepanski]{Mitchell Minyard and Mark R. Sepanski}
\date{July 2024}
\begin{document}

\keywords{graph colorings, neighborhood balanced coloring, generalized Petersen graphs, generalized Pappus graphs, regular graphs, cubic graphs, Tait coloring, forbidden subgraphs, coupon coloring, injective coloring, dominating set, domatic number, perfect matching}
\subjclass[2020]{Primary: 05C15; Secondary: 05C69, 05C70, 05C75}

\begin{abstract}
    A graph is said to be neighborhood 3-balanced if there exists a vertex labeling with three colors so that each vertex has an equal number of neighbors of each color.
    We give order constraints on 3-balanced graphs, determine which generalized Petersen and Pappus graphs are 3-balanced, discuss when being 3-balanced is preserved under various graph constructions, give two general characterizations of cubic 3-balanced graphs, and classify cubic 3-balanced graphs of small order.
\end{abstract}

\maketitle
\tableofcontents
\section{Introduction}

Freyberg and Marr, \cite{FreybergMarr}, recently introduced the notion of a neighborhood balanced coloring, 
which can be thought of as a two coloring so that each vertex has an equal number of neighbors of each color.
We call graphs that admit such a coloring 2-balanced. In this paper, we study the analogue for three colors and call graphs that admit such a coloring 3-balanced, Definition \ref{def: 3-balanced}.  

The notion of 3-balanced is closely linked to a number of related concepts. 
In \cite{Coupon}, Chen \emph{et.~ al.~}~ introduced a \emph{k-coupon coloring} of a graph, which is 
a $k$-color vertex labeling so that the open neighborhood of each vertex contains all colors.
As a result, every 3-balanced graph gives a 3-coupon coloring, though the converse is false. 

As in \cite{InjectiveChromaticNumber} and \cite{InjectiveGP}, an \emph{injective k-coloring} of a graph is k-color vertex labeling so that no vertices connected by a path of length 2 share the same color. Denote by $\chi_i(G)$ the smallest $k$ such that a injective $k$-coloring exists. As a result, a cubic graph is 3-balanced if and only if $\chi_i(G)= 3$.


The graph coloring problem can also be thought of in the framework of dominating sets. 
A \emph{dominating set} of a graph is a subset of vertices whose neighbors include all other vertices. 
A \emph{total $k$-dominating set} is a dominating set such that every vertex is adjacent to at least $k$ vertices from the set. 
If $G$ is $3k$-regular, then $G$ is 3-balanced if and only $V(G)$ can be partitioned into 3 total $k$-dominating sets. 
Closely related, and studied in \cite{DomaticNumber} and \cite{TotalDomaticNumber}, are the \emph{domatic number} and \emph{total domatic number} of a graph, respectively. The total dominating number, $d_t(G)$, is the maximum
number of total dominating sets into which the vertex set of $G$ can be
partitioned. In particular, a cubic graph is 3-balanced if and only if $d_t(G)=3$.

In \S\ref{sec: notation}, we begin with basic notation and definitions.
Then in \S\ref{sec: order constraints}, we give edge and vertex order constraints for 3-balanced graphs in Theorem \ref{thm: edge number} and additional constraints for regular 3-balanced graphs in Theorem \ref{thm: regular vertex number}.
In \S\ref{sec: Gen Pet} we determine which generalized Petersen graphs are 3-balanced, Theorem \ref{thm: 3-bal col of gen Petersen}, and \S\ref{sec: gen Pappus} does the same for generalized Pappus graphs, Definition \ref{def: gen pappus graph} and Theorem \ref{thm: gen papp graph 3-bal}.
In \S\ref{sec: constructions} we give methods of constructing 3-balanced graphs and study the preservation of the 3-balanced property under certain graph products.
In \S\ref{sec: cubic} we show that cubic 3-balanced graphs admit a Tait coloring in Theorem \ref{def: induced edge coloring}. Then Theorems \ref{thm: cubic sum characterization} and \ref{thm: cubic bijection construction} give characterizations of cubic 3-balanced graphs in terms of summations of edge labels over cycles and bijections between sets, respectively.
In \S\ref{sec: large and small cubic 3-bal} we give forbidden subgraphs and classify small cubic 3-balanced graphs, Theorems \ref{thm: order 6 class} and \ref{thm: cubic on 12} and Lemma \ref{lem: Forbidden Subgraphs}, as well as noting that
the relative frequency of large 3-balanced graphs tends to zero.
We end in \S\ref{sec: concluding remarks} with comments and questions for future work.

\section{Notation and Definitions} \label{sec: notation}

We write $\N$ for the nonnegative integers and $\Z^+$ for the positive integers. When no confusion will arise and when convenient, we allow conflating notation between $\Z$ and $\Z_n$.

We write $G=(V,E)$ for a finite simple undirected graph.
For $v\in V$, we let $N(v)$ denote the \emph{open neighborhood} of $v$ and write $d(v)$ for the \emph{degree} of $v$.

\begin{definition} \label{def: V_i and E_ij}
    Let $\ell:V\rightarrow \Z_3$ be a coloring of a graph, $G$. We write
    \[ V_i = \ell^{-1}(i) \]
    for $i\in\Z_3$ and
    \[ E_{ij} = \{vw\in E \,|\,  \{\ell(v),\ell(w)\} = \{i,j\} \} \]
    for $i,j \in \Z_3$. Note that $E_{ij} = E_{ji}$.
\end{definition}

Motivated by \cite{FreybergMarr}, we have the following definition.
\begin{definition} \label{def: 3-balanced}
    A coloring $\ell:V\rightarrow \Z_3$ of the vertices of a graph, $G$, is said to be \emph{neighborhood 3-balanced}, or \emph{3-balanced} for short, if, for all $v\in V$, 
    \[ |N(v) \cap V_i| = |N(v) \cap V_j| \]
    for all $i,j\in \Z_3$, so that each open neighborhood has an equal number of vertices colored with each color.
\end{definition}

An example of a 3-balanced graph is given in Figure \ref{fig: 3-balanced graph}.
Note that an isolated vertex is trivially 3-balanced. Also, as a graph is 3-balanced if and only if all of its connected components are, so we often restrict our study to connected graphs.

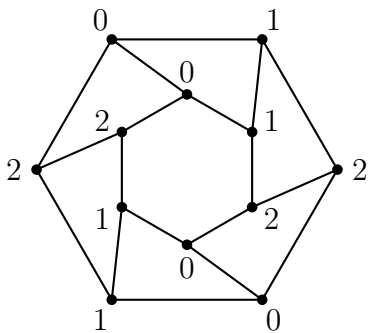
\begin{figure}[H]
    \centering
    
    \begin{tikzpicture}
        
        \coordinate (A) at (0,0);
        \foreach \i/\label in {1/1, 2/0, 3/2, 4/1, 5/0, 6/2} {
          \coordinate (outer\i) at ({60*\i}:2);
          \node at ({60*\i}:2.3) {\label};
        }
        \draw[thick] (outer1) -- (outer2) -- (outer3) -- (outer4) -- (outer5) -- (outer6) -- cycle;
        
        \foreach \i/\label in {1/1, 2/0, 3/2, 4/1, 5/0, 6/2} {
          \coordinate (inner\i) at ({60*\i - 30}:1);
          \node at ({60*\i - 30}:1.3) {\label};
        }
        \draw[thick] (inner1) -- (inner2) -- (inner3) -- (inner4) -- (inner5) -- (inner6) -- cycle;
        
        \foreach \i in {1,...,6} {
          \fill (outer\i) circle (2pt); 
        }
        
        \foreach \i in {1,...,6} {
          \fill (inner\i) circle (2pt); 
        }
        
        \foreach \i in {1,...,6} {
          \draw[thick] (outer\i) -- (inner\i);
        }
    
    \end{tikzpicture}

    \caption{A 3-Balanced Graph}
    \label{fig: 3-balanced graph}
\end{figure}

Note that 3-balanced colorings are not unique. At the very least, the following symmetries exist.
\begin{theorem} \label{thm: easy modification of a 3-bal coloring}
    Let $\ell$ be a 3-balanced coloring of $G$. Then so is $\epsilon \ell + i_0$ for $\epsilon\in\{\pm 1\}$ and $i_0\in\Z_3$.
\end{theorem}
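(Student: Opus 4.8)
The plan is to verify directly from Definition \ref{def: 3-balanced} that applying the affine map $i \mapsto \epsilon i + i_0$ on $\Z_3$ to the values of a 3-balanced coloring preserves the balance condition at every vertex. The key observation is that such a map is a \emph{bijection} of $\Z_3$ onto itself: for $\epsilon = 1$ it is a cyclic shift, and for $\epsilon = -1$ it is a reflection composed with a shift, and in both cases it permutes the three color classes.

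First I would set $\ell' = \epsilon\ell + i_0$ and let $\sigma:\Z_3\to\Z_3$ be the map $\sigma(i) = \epsilon i + i_0$, so that $\ell' = \sigma\circ\ell$. Since $\sigma$ is a bijection, for each vertex $v$ and each color $k\in\Z_3$ we have $N(v)\cap \ell'^{-1}(k) = N(v)\cap \ell^{-1}(\sigma^{-1}(k))$, hence $|N(v)\cap V'_k| = |N(v)\cap V_{\sigma^{-1}(k)}|$ where $V'_k = \ell'^{-1}(k)$. Because $\ell$ is 3-balanced, the three quantities $|N(v)\cap V_0|$, $|N(v)\cap V_1|$, $|N(v)\cap V_2|$ are all equal; as $k$ ranges over $\Z_3$, $\sigma^{-1}(k)$ also ranges over all of $\Z_3$, so the three quantities $|N(v)\cap V'_k|$ are a permutation of the former three and are therefore also all equal. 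Thus $\ell'$ satisfies the 3-balanced condition at $v$, and since $v$ was arbitrary, $\ell'$ is a 3-balanced coloring of $G$.

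There is essentially no obstacle here; the only thing to be careful about is confirming that $i\mapsto \epsilon i + i_0$ is genuinely a bijection of $\Z_3$ for $\epsilon\in\{\pm 1\}$, which holds since $\epsilon$ is a unit in $\Z_3$ (indeed $-1 \equiv 2$ and $\gcd(2,3)=1$), so the map is invertible with inverse $k\mapsto \epsilon^{-1}(k - i_0) = \epsilon(k-i_0)$. One could alternatively phrase the argument even more briefly by noting that precomposing the bijection $\sigma$ with $\ell$ simply relabels the color classes, and the defining condition of 3-balancedness is visibly symmetric under relabeling of colors; I would likely include the short computation above for completeness rather than appeal to symmetry informally.
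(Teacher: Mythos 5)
Your argument is correct: since $\epsilon$ is a unit in $\Z_3$, the map $i\mapsto\epsilon i+i_0$ is a bijection that merely permutes the color classes, so the equal-count condition at each vertex is preserved. The paper states this result without proof, treating it as immediate, and your verification is exactly the routine argument it has in mind.
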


\section{Order Constraints} \label{sec: order constraints}


From Definition \ref{def: 3-balanced} and the Handshaking lemma, we immediately have the following.

\begin{theorem} \label{thm: 3|deg}
    If a graph, $G$, is 3-balanced, then the degree of each vertex is divisible by 3. 
    In particular, $3 \mid |E|$.
\end{theorem}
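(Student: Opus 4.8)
The plan is to unwind the definition of 3-balanced at a single vertex and then invoke the Handshaking lemma. First I would fix a 3-balanced coloring $\ell:V\to\Z_3$ and an arbitrary vertex $v\in V$. By Definition \ref{def: 3-balanced}, the three quantities $|N(v)\cap V_0|$, $|N(v)\cap V_1|$, $|N(v)\cap V_2|$ are all equal; call their common value $k_v\in\N$. Since $V_0,V_1,V_2$ partition $V$, the sets $N(v)\cap V_0$, $N(v)\cap V_1$, $N(v)\cap V_2$ partition $N(v)$, so
\[
 d(v) = |N(v)| = \sum_{i\in\Z_3} |N(v)\cap V_i| = 3k_v,
\]
which gives $3\mid d(v)$ for every $v\in V$.

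For the ``in particular'' clause, I would sum this over all vertices and apply the Handshaking lemma: $\sum_{v\in V} d(v) = 2|E|$. The left-hand side is a sum of multiples of $3$, hence divisible by $3$, so $3 \mid 2|E|$. Since $\gcd(2,3)=1$, this forces $3\mid |E|$, completing the proof.

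There is no real obstacle here; the statement is an immediate corollary of the definition together with the Handshaking lemma, which is presumably why the authors phrase it as following ``immediately.'' The only point requiring a word of care is the coprimality step $3\mid 2|E| \Rightarrow 3\mid |E|$ at the very end, and even that is routine.
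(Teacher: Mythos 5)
Your proposal is correct and is exactly the argument the paper alludes to: the paper gives no written proof, stating only that the theorem follows immediately from Definition \ref{def: 3-balanced} and the Handshaking lemma, which is precisely your two-step argument (each neighborhood splits into three equal parts, then $3\mid 2|E|$ with $\gcd(2,3)=1$ gives $3\mid |E|$). Nothing further is needed.
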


Note that 3-balanced may certainly have $3\nmid |V|$, see Figure \ref{fig: Non-Regular 3-balanced graph}.

In addition to a constraint on the degree of each vertex, we have the following restriction on the number of edges of each color.

\begin{theorem} \label{thm: edge number}
    Suppose that $G$ is a 3-balanced graph. Then 
    \[|E_{ij}|=\frac{2|E|}{9} \text{ and } |E_{ii}|=\frac{|E|}{9}\]
    for $i,j \in \Z_3$ with $i\not\equiv j$. In particular, $9 \mid |E|$.
\end{theorem}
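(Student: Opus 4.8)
The plan is a two–way edge count organized by color classes. By Definition \ref{def: 3-balanced} and Theorem \ref{thm: 3|deg}, for each vertex $v$ we may set $k_v = d(v)/3 \in \N$, and then $v$ has exactly $k_v$ neighbors in each of $V_0,V_1,V_2$. For a color $i\in\Z_3$, introduce the quantity $S_i = \sum_{v\in V_i} k_v$; the whole argument reduces to evaluating $S_i$.

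First I would count the edges joining two distinct classes. Fix $i\neq j$. Counting the edges of $E_{ij}$ from the $V_i$ side, each $v\in V_i$ contributes its $k_v$ neighbors of color $j$, so $|E_{ij}| = \sum_{v\in V_i} k_v = S_i$; counting from the $V_j$ side gives likewise $|E_{ij}| = S_j$. Hence $S_0 = S_1 = S_2 =: S$, and $|E_{ij}| = S$ for all $i\neq j$. Next I would count the monochromatic edges: for $v\in V_i$ the $k_v$ neighbors of color $i$ are the other endpoints of edges of $E_{ii}$ at $v$, and $\sum_{v\in V_i} k_v$ counts each edge of $E_{ii}$ exactly twice, so $2|E_{ii}| = S_i = S$, i.e.\ $|E_{ii}| = S/2$.

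Finally, since every edge lies in exactly one $E_{ij}$ (with $i\le j$), summing over the three monochromatic classes and the three bichromatic classes gives
\[
|E| \;=\; \sum_{i\in\Z_3} |E_{ii}| \;+\; \sum_{\{i,j\},\, i\neq j} |E_{ij}| \;=\; 3\cdot \frac{S}{2} + 3S \;=\; \frac{9S}{2},
\]
so $S = 2|E|/9$. Substituting back yields $|E_{ij}| = 2|E|/9$ for $i\not\equiv j$ and $|E_{ii}| = S/2 = |E|/9$. Since $|E_{ii}|$ is a nonnegative integer, $9\mid |E|$.

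The argument is essentially routine double counting; the only place to be careful is the factor of $2$ when counting edges inside a single class (versus between two classes), and in recording that the total edge count decomposes into three bichromatic classes and three monochromatic classes with no overcounting. I expect no genuine obstacle beyond this bookkeeping.
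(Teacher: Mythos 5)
Your proposal is correct and follows essentially the same double-counting argument as the paper: counting $E_{ij}$ from either side to see the cross-class counts agree, using the factor of $2$ for monochromatic edges, and closing with a global edge count (your decomposition $|E| = 3\cdot\frac{S}{2} + 3S$ is just the paper's Handshaking step $2|E| = 9x$ in slightly different clothing). No gaps.
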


\begin{proof}
    Assume $G$ has a neighborhood balanced 3-coloring. Then
    \[ |E_{ij}| = \sum_{v\in V_i} \frac{1}{3} d(v)
                =\sum_{v\in V_j} \frac{1}{3} d(v) \]
    for $i,j\in \Z_3$ with $i\not\equiv j$.
    It follows that $|E_{12}|=|E_{13}|=|E_{23}|$. Write $x$ for this common number. 

    By the Handshaking lemma, we see that 
    \[ 2|E| = \sum_{i\in\Z_3} \sum_{v\in V_i} d(v)
            = \sum_{i\in\Z_3} 3x
            = 9x \]
    so that $|E_{ij}| = x = \frac{2|E|}{9}$.
    On the other hand,
    \[ 3x = \sum_{v\in V_i} d(v)
            = 2|E_{ii}| + |E_{i,i+1}| + |E_{i,i+2}|
            = 2|E_{ii}| +2x \]
    so that $|E_{ii}| = \frac{x}{2} = \frac{|E|}{9}$.    
\end{proof}


If $G$ is $r$-regular, we will have additional restrictions. Note that for $r$-regular graphs,
\begin{equation} \label{eqn: r-regular relation}
    |E| = \frac{r |V|}{2}.
\end{equation}
As a result, if $G$ is 3-balanced and $r$-regular, then Theorem \ref{thm: edge number} shows that
\[ 9 \mid (r |V|). \]
More importantly, we have the following relations.

 \begin{theorem} \label{thm: regular vertex number}
    Suppose $G$ is an $r$-regular graph that admits a neighborhood balanced 3-coloring. 
    Then $3\mid r$ 
    and, for $i,j\in\Z_3$ with $i\not\equiv j$,
    \[|E_{ij}|=\frac{r|V|}{9} \text{ and } |E_{ii}|=\frac{r|V|}{18}. \]
    Moreover,
    \[ |V_i| = \frac{|V|}{3}.\]
    Therefore, $3\mid |V|$ and $2 \mid (r|V|)$.
    
    In particular, if $G$ is cubic, \emph{i.e.,} $r = 3$, 
    then 
    \[|E_{ij}|=\frac{|V|}{3} \text{, \,\,} |E_{ii}|=\frac{|V|}{6} 
        \text{, \,\,and \,\,} |V_i| = \frac{|V|}{3} \]
    so that $6\mid |V|$.
\end{theorem}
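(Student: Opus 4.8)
The plan is to build on Theorem \ref{thm: edge number}, which already supplies $|E_{ij}| = 2|E|/9$ and $|E_{ii}| = |E|/9$ for a general 3-balanced graph, and to specialize everything using the $r$-regularity relation $|E| = r|V|/2$ from \eqref{eqn: r-regular relation}.

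\medskip

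\textbf{Step 1: Divisibility $3 \mid r$.} First I would observe that in an $r$-regular graph every vertex has degree $r$, so Theorem \ref{thm: 3|deg} immediately gives $3 \mid r$. (Alternatively, this also follows from Definition \ref{def: 3-balanced}: each neighborhood of size $r$ splits into three equal pieces.)

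\medskip

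\textbf{Step 2: Edge counts.} Substituting $|E| = r|V|/2$ into the formulas of Theorem \ref{thm: edge number} gives $|E_{ij}| = \frac{2}{9}\cdot\frac{r|V|}{2} = \frac{r|V|}{9}$ and $|E_{ii}| = \frac{1}{9}\cdot\frac{r|V|}{2} = \frac{r|V|}{18}$, as claimed.

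\medskip

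\textbf{Step 3: Vertex class sizes $|V_i| = |V|/3$.} This is the one genuinely new ingredient, and I expect it to be the main point of the argument. I would count the number of edges incident to the color class $V_i$ in two ways. On the one hand, since $G$ is $r$-regular, the number of edge-endpoints in $V_i$ is exactly $r|V_i|$. On the other hand, each edge inside $E_{ii}$ contributes $2$ endpoints to $V_i$, and each edge in $E_{ij}$ for $j \neq i$ contributes exactly one endpoint to $V_i$; hence
\[
r|V_i| = 2|E_{ii}| + |E_{i,i+1}| + |E_{i,i+2}| = 2\cdot\frac{r|V|}{18} + \frac{r|V|}{9} + \frac{r|V|}{9} = \frac{r|V|}{3}.
\]
Dividing by $r$ (which is nonzero; the case $r=0$ of isolated vertices is trivial) yields $|V_i| = |V|/3$.

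\medskip

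\textbf{Step 4: Divisibility conclusions.} Since $|V_i|$ is an integer and $\sum_i |V_i| = |V|$, the identity $|V_i| = |V|/3$ forces $3 \mid |V|$. Combined with $|E| = r|V|/2$ being an integer we get $2 \mid (r|V|)$. For the cubic case $r=3$, plugging in gives $|E_{ij}| = |V|/3$, $|E_{ii}| = |V|/6$, and $|V_i| = |V|/3$; the integrality of $|E_{ii}| = |V|/6$ forces $6 \mid |V|$ (note $3 \mid |V|$ was already known, and $|V|/6 \in \Z$ upgrades this to $6 \mid |V|$).

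\medskip

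The only place requiring any care is Step 3, and even there the ``count endpoints in $V_i$ two ways'' idea is the natural move; the subtlety is just remembering that $E_{ii}$-edges are double-counted while cross-edges are single-counted. Everything else is substitution into Theorem \ref{thm: edge number} and elementary divisibility bookkeeping. One should also note at the outset that isolated vertices / the empty graph are trivially handled, so that dividing by $r$ is legitimate in all cases of interest.
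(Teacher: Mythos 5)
Your proposal is correct and takes essentially the same route as the paper: specialize Theorem \ref{thm: edge number} using $|E| = r|V|/2$, and obtain $|V_i| = |V|/3$ from an elementary incidence count, then read off the divisibility statements. The only cosmetic difference is that the paper counts just the cross edges of one color, $\frac{r}{3}|V_i| = |E_{i,i+1}| = \frac{r|V|}{9}$, whereas you use the full degree sum $r|V_i| = 2|E_{ii}| + |E_{i,i+1}| + |E_{i,i+2}|$; both are the same double-counting idea and give the same conclusion.
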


\begin{proof}
    The claim that $3\mid r$ follows from Theorem \ref{thm: 3|deg}. The first pair of identities follow from Theorem \ref{thm: edge number} and Equation \ref{eqn: r-regular relation}. The next displayed identity follows from the observation that
    \[ \frac{r}{3} \, |V_i| = |E_{i,i+1}| = \frac{r|V|}{9}. \]
    Divisibility by 3 follows from this and divisibility by 2 from Equation \ref{eqn: r-regular relation}.
\end{proof}


We note that the equidistribution of vertex colors in Theorem \ref{thm: regular vertex number} need not hold for non-regular 3-balanced graphs. See Figure \ref{fig: Non-Regular 3-balanced graph} for an example.
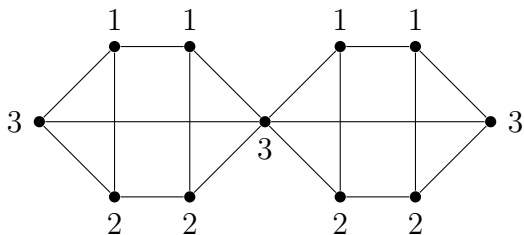
\begin{figure}[H]
    \centering
    
    \begin{tikzpicture}        
        \node[circle, fill, inner sep=1.5pt, label=below:3] (v1) at (0,0) {};
        \node[circle, fill, inner sep=1.5pt, label=above:1] (v2) at (-1,1) {};
        \node[circle, fill, inner sep=1.5pt, label=above:1] (v3) at (-2,1) {};
        \node[circle, fill, inner sep=1.5pt, label=left:3] (v4) at (-3,0) {};
        \node[circle, fill, inner sep=1.5pt, label=below:2] (v5) at (-2,-1) {};
        \node[circle, fill, inner sep=1.5pt, label=below:2] (v6) at (-1,-1) {};
        \node[circle, fill, inner sep=1.5pt, label=above:1] (u2) at (1,1) {};
        \node[circle, fill, inner sep=1.5pt, label=above:1] (u3) at (2,1) {};
        \node[circle, fill, inner sep=1.5pt, label=right:3] (u4) at (3,0) {};
        \node[circle, fill, inner sep=1.5pt, label=below:2] (u5) at (2,-1) {};
        \node[circle, fill, inner sep=1.5pt, label=below:2] (u6) at (1,-1) {};

        \draw (v1) -- (v2);        \draw (v2) -- (v3);        \draw (v3) -- (v4);        \draw (v4) -- (v5);        \draw (v5) -- (v6);        \draw (v6) -- (v1);        \draw (v1) -- (v4);        \draw (v2) -- (v6);        \draw (v3) -- (v5);        \draw (v1) -- (u2);        \draw (u2) -- (u3);        \draw (u3) -- (u4);        \draw (u4) -- (u5);        \draw (u5) -- (u6);        \draw (u6) -- (v1);        \draw (v1) -- (u4);        \draw (u2) -- (u6);        \draw (u5) -- (u3);
        \end{tikzpicture}

    \caption{Non-Regular 3-Balanced Graph With Non-Equidistribution of Colors }
    \label{fig: Non-Regular 3-balanced graph}
\end{figure}

\section{3-Balanced Generalized Petersen Graphs} \label{sec: Gen Pet}

Write $G(m,j)$ for the \emph{generalized Petersen graph} where $m,j\in\N$ with $m\geq 5$ and $1\leq j < \frac{m}{2}$. We will use the notation $\{v_i, u_i \,|\, i\in \Z_m\}$ for the set of vertices with edges
\[ v_i v_{i+1}, \,\, v_i u_i, \text{ and } u_i u_{i+j}. \]
We may refer to the $\{v_i\}$ as the \emph{exterior vertices} and the $\{u_i\}$ as the \emph{interior vertices.}
Observe that the interior vertices break up into $(j,m)$ cycles of size $\frac{m}{(m,j)}$.

We begin with minimal constraints for $G(m,j)$ to admit a 3-balanced coloring.

\begin{lemma} \label{lem: basic Petersen 3-balanced easy result}
    Let $m,j\in\N$ with $m\geq 5$ and $1\leq j < \frac{m}{2}$. 
    \begin{enumerate}
        \item If $G(m,j)$ admits a balanced 3-coloring, then $3\mid m$.
        \item If $3\mid m$, but $3\nmid j$, then $G(m,j)$ admits a balanced 3-coloring.
    \end{enumerate}
\end{lemma}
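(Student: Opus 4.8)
The plan is to get part (1) for free from the regular order constraints already established, and to prove part (2) by exhibiting a single explicit periodic coloring.

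\emph{Part (1).} The graph $G(m,j)$ is cubic with $|V| = 2m$. If it admits a balanced $3$-coloring, then Theorem \ref{thm: regular vertex number} forces $6 \mid |V| = 2m$, hence $3 \mid m$. That is the entire argument.

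\emph{Part (2).} Assume $3 \mid m$ and $3 \nmid j$. Since $3 \mid m$, reduction modulo $3$ is well-defined on $\Z_m$, so I would define $\ell \colon V \to \Z_3$ by $\ell(v_i) = \ell(u_i) = i \bmod 3$ for all $i \in \Z_m$, and check that this is $3$-balanced. Because $G(m,j)$ is cubic, being $3$-balanced is equivalent to every open neighborhood receiving each color exactly once, so only two verifications are needed. For an exterior vertex $v_i$, its neighbors $v_{i-1}, v_{i+1}, u_i$ get colors $i-1, i+1, i$ modulo $3$, which exhaust $\Z_3$. For an interior vertex $u_i$, its neighbors $u_{i-j}, u_{i+j}, v_i$ get colors $i-j, i+j, i$ modulo $3$; here $3 \nmid j$ guarantees $j \not\equiv 0$ and $j \not\equiv -j \pmod 3$ (the latter since $\gcd(2,3)=1$), so these three colors again exhaust $\Z_3$. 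Hence $\ell$ is a balanced $3$-coloring.

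I do not expect any real obstacle: (1) is a one-line corollary of earlier work, and (2) reduces to two short checks against Definition \ref{def: 3-balanced} on a cubic graph. The genuine difficulty of the section lies in the complementary case $3 \mid m$ and $3 \mid j$, where the residue coloring above breaks down at the interior vertices (all of $u_{i-j}, u_{i+j}, v_i$ then receive color $i$) and one must instead build a more delicate coloring using the structure of the $(m,j)$-cycles among the interior vertices — or prove that none exists. That is precisely what Theorem \ref{thm: 3-bal col of gen Petersen} settles.
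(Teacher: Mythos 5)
Your proposal is correct and follows essentially the same route as the paper: part (1) is deduced from Theorem \ref{thm: regular vertex number} applied to the cubic graph on $2m$ vertices, and part (2) uses the same residue coloring $\ell(v_i)=\ell(u_i)=i \bmod 3$, with the same check that $3\nmid j$ makes the three neighbor colors of each interior vertex distinct. Your verification is in fact slightly more explicit than the paper's (noting $j\not\equiv -j$ via $\gcd(2,3)=1$, and that $3\mid m$ makes the coloring well defined), but the argument is the same.
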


\begin{proof}
    As $G(m,j)$ is 3-regular and has $2m$ vertices, the first result follows from Theorem \ref{thm: regular vertex number}.

    For the second result, use the coloring 
    $\ell(v_i)=\ell(u_i) =i$. This coloring results in $N(v_i)$ being colored with 
    $\{i-1,i,i+1\}$, which is 3-balanced. It also results in $N(u_i)$ being colored with 
    $\{i-j,i,i+j\}$. These colors overlap if and only if $j \equiv 0$. As $3\nmid j$, this does not happen and the neighborhood is 3-balanced. 
\end{proof}

In fact, we will see that Lemma \ref{lem: basic Petersen 3-balanced easy result} gives a complete description of the generalized Petersen graphs that admit a neighborhood balanced 3-coloring. In \cite{InjectiveGP}, Li, Shao, and Zhu studied \emph{injective k-colorings} and showed that $\chi_i(G(m,j))=3$ if and only if $3 \mid m$ and $3 \nmid j$. As noted in the Introduction, a cubic graph $G$ is 3-balanced if and only if $\chi_i(G)=3$. Below, we present a more direct proof for the classification of 3-balanced generalized Petersen graphs that generalizes nicely to other graph families, see \S\ref{sec: gen Pappus} below for a similar result on generalized Pappus graphs.

\begin{theorem} \label{thm: 3-bal col of gen Petersen}
    Let $m,j\in\N$ with $m\geq 5$ and $1\leq j < \frac{m}{2}$.
    Then $G(m,j)$ admits a balanced 3-coloring if and only if $3\mid m$ and $3\nmid j$.
\end{theorem}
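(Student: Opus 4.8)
The forward implication---$3\mid m$ and $3\nmid j$ imply $G(m,j)$ is $3$-balanced---together with the necessity of $3\mid m$ are exactly the two parts of Lemma~\ref{lem: basic Petersen 3-balanced easy result}. So the only thing left is: \emph{if $G(m,j)$ admits a balanced $3$-coloring and $3\mid m$, then $3\nmid j$.} I would argue by contradiction, assuming $3\mid j$ and fixing a balanced $3$-coloring $\ell$, and I would first convert the coloring condition into a recurrence. Write $a_i=\ell(v_i)$ and $b_i=\ell(u_i)$ for $i\in\Z_m$, with values in $\Z_3$. Since $N(v_i)=\{v_{i-1},v_{i+1},u_i\}$ and $N(u_i)=\{u_{i-j},u_{i+j},v_i\}$ must each meet all three colors, one gets for every $i$: $a_{i-1}\ne a_{i+1}$, $b_i\equiv-(a_{i-1}+a_{i+1})$, $b_{i-j}\ne b_{i+j}$, and $a_i\equiv-(b_{i-j}+b_{i+j})$ (mod $3$). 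Substituting the formula for $b$ eliminates the interior colors and leaves the single ``master recurrence''
\[ a_i\ \equiv\ a_{i-j-1}+a_{i-j+1}+a_{i+j-1}+a_{i+j+1}\pmod 3\qquad(\star) \]
valid for all $i$, together with the two non-degeneracy conditions $a_{i-1}\ne a_{i+1}$ and $a_{i-j-1}+a_{i-j+1}\ne a_{i+j-1}+a_{i+j+1}$. Conversely these data reconstruct a balanced $3$-coloring, so the goal becomes: show the exterior cycle carries no $\Z_3$-labeling satisfying $(\star)$ and both inequalities when $3\mid m$ and $3\mid j$.

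Since $\Z_3$ is a field, the natural setting is the group algebra $R:=\Z_3[x]/(x^m-1)$, encoding the exterior labeling as $a(x)=\sum_i a_ix^i$. Then $(\star)$ becomes $\widetilde f(x)\,a(x)=0$ in $R$ with $\widetilde f(x)=(x^{2j}+1)(x^2+1)-x^{j+1}$; the first non-degeneracy condition says $(x^2-1)\,a(x)$ has no zero coefficient; and, after multiplying by a monomial unit, the second says $(x^2+1)(x^{2j}-1)\,a(x)$ has no zero coefficient. So one must prove: when $3\mid m$ and $3\mid j$, no element of $\operatorname{Ann}_R(\widetilde f)=\bigl((x^m-1)/\gcd(\widetilde f(x),x^m-1)\bigr)$ makes both $(x^2-1)a$ and $(x^2+1)(x^{2j}-1)a$ coefficientwise nonzero. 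The hypotheses enter through characteristic-$3$ Frobenius identities: $x^m-1=(x^{m/3}-1)^3$, so every root of $x^m-1$ occurs with multiplicity a multiple of $3$, whereas $\widetilde f$ vanishes at $x=1$ only to order $2$; and a common root $\zeta$ of $\widetilde f$ and $x^m-1$ satisfies $\zeta^m=1$ and, rewriting $\widetilde f(\zeta)=0$, the relation $(\zeta+\zeta^{-1})(\zeta^j+\zeta^{-j})=1$, forcing $\zeta+\zeta^{-1}\ne0$. These constraints pin down $\operatorname{Ann}_R(\widetilde f)$ tightly. (One can also avoid generating functions and argue directly on the cyclic word $a$: the first inequality forces $a$, read in steps of $2$, to be a proper coloring of the underlying cycle(s), and then $(\star)$ and the step-$2j$ inequality clash. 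In the smallest case this is already transparent: when the interior cycles are triangles---that is, for $G(9k,3k)$---rainbow neighborhoods at a triangle $u_t,u_{t+j},u_{t+2j}$ force the three interior colors $b_t,b_{t+j},b_{t+2j}$ to be pairwise distinct, while $(\star)$ forces $a$, hence also $b$, to be $3$-periodic, so these three colors coincide.)

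The main obstacle is that the precise shape of $\operatorname{Ann}_R(\widetilde f)$---equivalently, which $m$-th roots of unity (in an algebraic closure of $\Z_3$) are roots of $\widetilde f$---varies with $\gcd(m,j)$, i.e.\ with the lengths of the interior cycles, and with the $2$- and $3$-parts of $m$, which decide whether the step-$2$ and step-$2j$ subgraphs of a cycle are connected. Moreover, as examples like $G(24,3)$ show, neither non-degeneracy inequality need fail for \emph{every} solution of $(\star)$, so the two inequalities really have to be played off against each other rather than handled separately. Packaging this into one case-free argument is the delicate point. Once it is in place, the same passage---from the neighborhood conditions to a short recurrence plus non-degeneracy inequalities, then to an annihilator in a group algebra over $\Z_3$---applies verbatim to the generalized Pappus graphs of \S\ref{sec: gen Pappus}, which is why this route is worth setting up carefully.
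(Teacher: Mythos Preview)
Your setup is correct: eliminating the interior labels gives the recurrence $(\star)$ over $\Z_3$ together with the two inequalities, and encoding this in $R=\Z_3[x]/(x^m-1)$ is natural. But the proposal is not a proof---it is a plan. You yourself identify the ``main obstacle'' and the ``delicate point'' (the shape of $\operatorname{Ann}_R(\widetilde f)$ depends on $\gcd(m,j)$ and on the $2$- and $3$-parts of $m$, and the two non-degeneracy conditions must be combined) and then simply assert ``once it is in place''. Nothing in what you wrote shows how to carry this out, and your illustrative case $G(9k,3k)$ is not argued either: you claim $(\star)$ forces $a$ to be $3$-periodic, but $(\star)$ alone (a linear recurrence) has many non-periodic solutions in $R$, so that step already needs the inequalities and is exactly the kind of interaction you flagged as delicate. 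As written, the argument has a genuine gap at its core.

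The paper avoids this difficulty by an entirely different mechanism: it never works mod $3$ with the colors themselves. Instead, for each $n=3^a$ dividing $m$ it \emph{counts} over $\Z$ the number of exterior and interior vertices of a fixed color in each residue class mod $n$, obtaining a linear system $M\nu=(m/n)\mathbf{1}$ with $M$ a $2n\times 2n$ integer matrix built from circulants. Nonsingularity of $M$ (proved via the circulant eigenvalues and Mann's theorem on vanishing sums of roots of unity, which is where the hypothesis $3\mid j$ enters) forces the unique solution $\nu=(m/(3n))\mathbf{1}$; integrality of the counts then gives $3^{a+1}\mid m$, contradicting finiteness of $m$. This infinite-descent-on-powers-of-$3$ idea sidesteps any case analysis on $\gcd(m,j)$ or on parity, and it is what actually closes the argument. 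If you want to pursue your group-algebra route, you would need a replacement for this step---either a direct computation of $\gcd(\widetilde f(x),x^m-1)$ in characteristic $3$ valid for all $m,j$ with $3\mid m$, $3\mid j$, or some other uniform obstruction---and that is precisely the part your proposal leaves open.
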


The proof of Theorem \ref{thm: 3-bal col of gen Petersen} will given in Lemmas \ref{lem: matrix inverse}, \ref{lem: soln to M eqn}, and \ref{lem: final Petersen result} below. By Lemma \ref{lem: basic Petersen 3-balanced easy result}, it remains to show that when $3\mid m$ and $3\mid j$, then $G(m,j)$ does not admit a 3-balanced coloring. For example, $G(24,6)$ in Figure \ref{fig: G(24,6) not 3-balanced}, will not admit a balanced 3-coloring. 

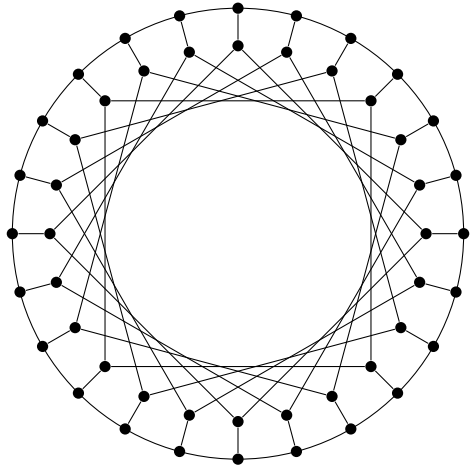
\begin{figure}[H]
    \centering    
    \begin{tikzpicture}
          \def\n{24} 
          \def\m{24} 
          \def\radius{3cm} 
          \def\innerRadius{2.5cm} 
          
          \draw (0,0) circle (\radius);
          \foreach \i in {1,...,\n} {
            \node[circle, fill, inner sep=1.5pt] (outer\i) at ({360/\n * (\i - 1)}:\radius) {};
          }
          
          \foreach \j in {1,...,\m} {
            \node[circle, fill, inner sep=1.5pt] (inner\j) at ({360/\m * (\j - 1)}:\innerRadius) {};
          }
          
          \foreach \k in {1,...,\m} {
            \draw (inner\k) -- (outer\k);
          }
          
          \foreach \k in {1,...,\m} {
            \pgfmathtruncatemacro{\nextdot}{mod(\k + 6 - 1, \m) + 1}
            \draw (inner\k) -- (inner\nextdot);
          }
    \end{tikzpicture}
    
    \caption{$G(24,6)$ Is Not 3-Balanced}
    \label{fig: G(24,6) not 3-balanced}
\end{figure}

Throughout the remainder of this section, suppose $m,j\in\N$ with $m\geq 5$ , $1\leq j < \frac{m}{2}$, and
\[ 3\mid m \text{ and } 3\mid j. \]
Fix 
\[ n = 3^a, \]
with $a\in\Z^+$, so that 
\[  n \mid m.\]

Consider the circulant matrices $A,B\in M_{n\times n}(\Z)$ given by
\[  A = \begin{pmatrix} 
        0 & 1 & 0 & 0 &0 &\ldots & 0& 0 & 1 \\
        1 & 0 & 1 & 0 &0 &\ldots & 0& 0 & 0\\
        0 & 1 & 0 & 1 &0 &\ldots & 0& 0 & 0\\
        &&&&&\vdots&&& \\
        1 & 0 & 0  &0 &0 &\ldots  & 0& 1 & 0
    \end{pmatrix}\]
and 
\[  B = \begin{pmatrix} 
        0 &  \ldots & 0 & 1& 0 &0&0&\ldots & 0 & 1 & 0 & 0 & 0 \ldots & 0\\
        0 &  \ldots & 0 & 0& 1 &0&0&\ldots & 0 & 0 & 1 & 0 & 0 \ldots & 0\\
        0 &  \ldots & 0 & 0& 0 &1&0&\ldots & 0 & 0 & 0 & 1 & 0 \ldots & 0\\
        &&&&&&&\vdots&&&&&&
    \end{pmatrix}\]
with 1's in the first row of $B$ occur in columns $(j+1)$ and $(n-j+1)$. When appropriate, view the column indexing mod $n$ so that, 
for $n=3$ and $j=3$, $B$ degenerates to
\[ \begin{pmatrix}
    2 & 0 & 0\\
    0 & 2 & 0\\
    0 & 0 & 2
\end{pmatrix}.\]
Finally, let $M\in M_{2n\times 2n}(\Z)$ be given by
\[  M = \begin{pmatrix}
        A & I_n \\
        I_n & B
\end{pmatrix}\]
where $I_n$ is the identity matrix of size $n$.

We will need the following result from Mann, \cite{Mann}.
\begin{theorem}[\cite{Mann}, Theorem 1] \label{thm: Mann}
    Let $r,a_1,\ldots,a_r\in\Z^+$ and set 
    \[ m= \prod_{p \text{ prime, } p\leq r} p.\]
    If $\zeta_1, \dots, \zeta_r$ are roots of unity so that 
    \[\sum_{i=1}^r a_i\zeta_i =0, \] 
    but no proper subset, $S$, of $\{1, \dots, r\}$ satisfies 
    $\sum_{i\in S} a_i\zeta_i =0$, 
    then 
    \[ \left( \frac{\zeta_i}{\zeta_j} \right)^m =1 \] 
    for all $1\leq i,j \leq r$.
\end{theorem}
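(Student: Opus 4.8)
The plan is to reduce the statement to a combinatorial fact handled one prime at a time. Set $n := \operatorname{lcm}\{\operatorname{ord}(\zeta_i/\zeta_j) : 1 \le i, j \le r\}$; since both $n$ and the desired conclusion are unchanged if every $\zeta_i$ is replaced by $\zeta_i/\zeta_r$, I would first normalize $\zeta_r = 1$, so that $n = \operatorname{lcm}\{\operatorname{ord}(\zeta_i)\}$ and each $\zeta_i \in \mu_n$. The conclusion $(\zeta_i/\zeta_j)^m = 1$ is exactly the assertion that $n \mid m = \prod_{p \le r} p$, that is, that $n$ is squarefree and every prime dividing $n$ is at most $r$. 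So it suffices to prove those two facts.

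Fix a prime $p \mid n$ and write $n = p^e q$ with $p \nmid q$ and $e \ge 1$. Since $\gcd(p^e, q) = 1$, each $\zeta_i$ factors uniquely as $\zeta_i = \xi_i \rho_i$ with $\xi_i = \zeta_{p^e}^{s_i} \in \mu_{p^e}$ and $\rho_i \in \mu_q$. Grouping $\sum_i a_i \zeta_i = 0$ by the value of $\xi_i$ gives $\sum_{\xi \in \mu_{p^e}} c_\xi\, \xi = 0$, where $c_\xi := \sum_{\{i\,:\,\xi_i = \xi\}} a_i \rho_i \in \mathbb{Z}[\zeta_q]$. The cyclotomic input is that the minimal polynomial of $\zeta_{p^e}$ over $\mathbb{Q}(\zeta_q)$ is still $\Phi_{p^e}(x) = \sum_{k=0}^{p-1} x^{k p^{e-1}}$ (because $[\mathbb{Q}(\zeta_{p^e q}) : \mathbb{Q}(\zeta_q)] = \varphi(p^e)$); dividing $\sum_s c_s x^s$, a polynomial of degree less than $p^e$, by $\Phi_{p^e}$ then forces $c_s$ to depend only on $s \bmod p^{e-1}$.

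Now comes the heart of the argument. For a residue $t$ modulo $p^{e-1}$, put $I_t := \{i : s_i \equiv t \pmod{p^{e-1}}\}$. Using that $c_s$ is constant on each residue class modulo $p^{e-1}$ together with $\sum_{k=0}^{p-1} \zeta_p^k = 0$, one computes directly that $\sum_{i \in I_t} a_i \zeta_i = 0$ for every $t$. The $I_t$ partition $\{1, \dots, r\}$ and each gives a vanishing subsum of $\sum_i a_i\zeta_i$, so the no-proper-vanishing-subsum hypothesis forces exactly one $I_t$ to equal $\{1, \dots, r\}$ and all others to be empty. Hence all $s_i$ agree modulo $p^{e-1}$, so $\xi_i/\xi_j \in \mu_p$ for all $i,j$; as $\zeta_i/\zeta_j = (\xi_i/\xi_j)(\rho_i/\rho_j)$ is a product of elements of coprime order, the exponent of $p$ in $n$ equals $1$, and since $p$ was an arbitrary prime divisor of $n$, $n$ is squarefree. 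It remains to bound the primes. Now $e = 1$, and $\sum_{\xi \in \mu_p} c_\xi \xi = 0$ forces all $c_\xi$ to be equal, since the only $\mathbb{Q}(\zeta_q)$-linear relation among $1, \zeta_p, \dots, \zeta_p^{p-1}$ is the vanishing of their sum. If some $\xi \in \mu_p$ did not occur as an $\xi_i$, this common value would be $0$, whence every nonempty fiber $\{i : \xi_i = \xi\}$ would be a vanishing subsum and so equal to $\{1, \dots, r\}$; that forces all $\xi_i$ equal and $p \nmid n$, a contradiction. So the $p$ fibers $\{i : \xi_i = \xi\}$ ($\xi \in \mu_p$) are nonempty and pairwise disjoint in $\{1, \dots, r\}$, giving $p \le r$. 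Together these facts yield $n \mid \prod_{p \le r} p$.

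The step I expect to be the main obstacle --- and the only place where the hypothesis on vanishing proper subsums is genuinely used --- is the averaging computation producing $\sum_{i \in I_t} a_i \zeta_i = 0$; the remainder is cyclotomic bookkeeping, whose most delicate ingredient is the correct description of the $\mathbb{Q}(\zeta_q)$-linear relations among the $p^e$-th roots of unity through the shape of $\Phi_{p^e}$.
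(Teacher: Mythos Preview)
The paper does not prove this statement at all: Theorem~\ref{thm: Mann} is quoted verbatim from Mann's paper and used as a black box in the proofs of Lemmas on the generalized Petersen and Pappus graphs. So there is no ``paper's own proof'' to compare against.

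That said, your argument is correct and is essentially the classical proof. The reduction to showing that $n=\operatorname{lcm}\{\operatorname{ord}(\zeta_i/\zeta_j)\}$ is squarefree with all prime factors at most $r$ is the right reformulation, the normalization $\zeta_r=1$ is harmless, and the cyclotomic input---that $\Phi_{p^e}$ remains the minimal polynomial of $\zeta_{p^e}$ over $\mathbb{Q}(\zeta_q)$ when $\gcd(p,q)=1$---is exactly what is needed. Your ``averaging'' computation is the crux and checks out: writing $s_i=t+k_ip^{e-1}$ for $i\in I_t$, one has
\[
\sum_{i\in I_t} a_i\zeta_i
=\zeta_{p^e}^{t}\sum_{k=0}^{p-1}\zeta_p^{k}\,c_{t+kp^{e-1}}
=\zeta_{p^e}^{t}\,d_t\sum_{k=0}^{p-1}\zeta_p^{k}=0,
\]
using that $c_s$ depends only on $s\bmod p^{e-1}$. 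The minimality hypothesis then collapses all $s_i$ into a single residue class mod $p^{e-1}$, forcing $e=1$ (here you implicitly use $\zeta_r=1\Rightarrow s_r=0$, which is fine). The final counting step, that the $p$ fibers $\{i:\xi_i=\xi\}$ must all be nonempty or else each nonempty fiber is itself a vanishing subsum, correctly gives $p\le r$.

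Two small presentational points: you should say explicitly that ``proper subset'' excludes the empty set (otherwise the hypothesis is vacuously violated), and it is worth noting that the positivity $a_i\in\Z^+$ is what guarantees singleton subsums never vanish, so that $r\ge 2$ and the fiber argument is not trivial.
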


We begin the proof of Theorem \ref{thm: 3-bal col of gen Petersen}. 

\begin{lemma} \label{lem: matrix inverse}
    If $3\mid j$, then
    the matrix, $M$, is nonsingular.
\end{lemma}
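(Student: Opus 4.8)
The plan is to exploit the circulant structure: $A$, $I_n$, and $B$ are all polynomials in the cyclic shift, hence are simultaneously diagonalized by the discrete Fourier transform. Under this transform $M$ becomes block–diagonal with $2\times 2$ Fourier blocks
\[ M_\omega=\begin{pmatrix}\omega+\omega^{-1}&1\\ 1&\omega^{j}+\omega^{-j}\end{pmatrix} \]
indexed by the $n$-th roots of unity $\omega$ (this correctly yields the degenerate block $\left(\begin{smallmatrix}\omega+\omega^{-1}&1\\ 1&2\end{smallmatrix}\right)$ when $n\mid j$), so that $\det M=\prod_{\omega^{n}=1}\bigl((\omega+\omega^{-1})(\omega^{j}+\omega^{-j})-1\bigr)$. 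Thus it suffices to prove that no $n$-th root of unity $\omega$ satisfies $(\omega+\omega^{-1})(\omega^{j}+\omega^{-j})=1$.

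Suppose one did. Expanding, the equation reads $\omega^{j+1}+\omega^{-(j+1)}+\omega^{j-1}+\omega^{-(j-1)}=1$, i.e. $\eta_1+\eta_2+\eta_3+\eta_4+\eta_5=0$ is a vanishing sum of five roots of unity with all coefficients $1$, where $\eta_1=\omega^{j+1}$, $\eta_2=\omega^{-(j+1)}$, $\eta_3=\omega^{j-1}$, $\eta_4=\omega^{-(j-1)}$, $\eta_5=-1$. If $\omega=1$ the left side of the displayed identity is $4\neq1$, so $\omega\neq1$, and since $n=3^{a}$ the order of $\omega$ is $3^{b}$ for some $1\le b\le a$. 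I would record two facts used throughout: this order is odd, so no power $\omega^{2k}$ equals $-1$; and since $3\mid j$ we have $\gcd(j\pm1,3^{a})=1$, so $\omega^{j\pm1}=1$ would force $\omega=1$.

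Now take a minimal nonempty $S\subseteq\{1,\dots,5\}$ with $\sum_{i\in S}\eta_i=0$. A singleton is impossible; a two-element zero-sum needs $\eta_i=-\eta_{i'}$, but every ratio among $\eta_1,\dots,\eta_4$ has the form $\omega^{2k}$ (never $-1$) and every $\eta_i/\eta_5=-\eta_i=-\omega^{\pm(j\pm1)}$ equals $-1$ only if $\omega=1$; a three-element zero-sum forces its pairwise ratios to be cube roots of unity, which excludes $\eta_5\in S$ (since $(\eta_i/\eta_5)^3=-\omega^{\pm3(j\pm1)}=1$ would force $\omega^{\pm3(j\pm1)}=-1$, impossible by odd order) and, for $S\subseteq\{1,2,3,4\}$, forces $\omega^{\pm2}$ to be a cube root of unity, hence $\omega^{3}=1$, i.e. $\omega$ a primitive cube root $\rho$ — but then, using $3\mid j$, the multiset $\{\eta_1,\eta_2,\eta_3,\eta_4\}=\{\rho,\rho^{2},\rho^{2},\rho\}$ has no zero-sum triple. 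Hence $|S|\in\{4,5\}$, so Mann's theorem (Theorem \ref{thm: Mann}) applies to the minimal relation $S$ with $\prod_{p\le|S|}p\in\{6,30\}$, making every ratio $\eta_i/\eta_{i'}$ ($i,i'\in S$) a root of unity of order dividing $6$ or $30$. Since $S$ contains at least three of $\{1,2,3,4\}$ and every $3$-subset of $\{1,2,3,4\}$ contains $\{1,3\}$ or $\{2,4\}$, one of $\omega^{2}=\eta_1/\eta_3$, $\omega^{-2}=\eta_2/\eta_4$ is such a root of unity, so $\omega^{12}=1$ or $\omega^{60}=1$; intersecting with $\omega^{3^{a}}=1$ gives $\omega^{3}=1$, so $\omega=\rho$. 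Then $\eta_1+\eta_2+\eta_3+\eta_4=2(\rho+\rho^{2})=-2\neq1$, a contradiction. Therefore $\det M\neq0$.

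The routine parts I would not belabor are the circulant eigenvalue computation and the textbook classification of short zero-sums of roots of unity. The genuine content is the subsum bookkeeping that makes Mann's theorem applicable — in particular showing $\eta_5=-1$ can never sit in a two- or three-term zero-sum, and that any surviving four-term subsum necessarily exposes a pair $\{\eta_1,\eta_3\}$ or $\{\eta_2,\eta_4\}$ so that $\omega^{\pm2}$ is controlled — after which the arithmetic of $n=3^{a}$ together with $3\mid j$ collapses every remaining case to $\omega=\rho$, where the identity visibly fails. That bookkeeping is where I expect the only real difficulty to lie.
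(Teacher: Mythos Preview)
Your proof is correct and follows the same overall plan as the paper: reduce via the circulant structure to showing that no $n$-th root of unity $\omega$ satisfies $(\omega+\omega^{-1})(\omega^j+\omega^{-j})=1$, then apply Mann's theorem to the resulting five-term vanishing sum. The paper reaches the same eigenvalue equation through the block identity $M\left(\begin{smallmatrix}-B&I\\ I&-A\end{smallmatrix}\right)=\left(\begin{smallmatrix}I-AB&0\\0&I-BA\end{smallmatrix}\right)$ rather than by direct Fourier block-diagonalization, and in its case analysis it appeals to a finite verification (``by hand or by computer'') of which pairs of $30$th roots of unity satisfy $\zeta_1+\zeta_1^{-1}+\zeta_2+\zeta_2^{-1}=1$, then uses a reality argument for the proper-subset case. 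Your systematic enumeration of minimal zero-sum sizes $|S|\in\{2,3,4,5\}$, exploiting throughout that $\omega$ has odd $3$-power order and that $\gcd(j\pm1,3)=1$, sidesteps that external check and forces the only surviving case to be $\omega^3=1$, where the identity visibly fails. The two arguments are the same in spirit; yours is somewhat more self-contained.
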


\begin{proof}
    Observe that
    \[ M
        \begin{pmatrix}
            -B & I_n \\
            I_n & -A
        \end{pmatrix}
        =
        \begin{pmatrix}
            I_n-AB & 0 \\
            0 & I_n -BA
        \end{pmatrix}.\]
    As a result, it suffices to show that $I_n-AB=I_n-BA$ is nonsingular.

    Write $\omega = e^{2\pi i/n}$. As $A$ and $B$ are circulant, 
    their eigenvectors are
    \[v_k = (1, \omega^k, \omega^{2k}, \omega^{3k}, \dots, \omega^{(n-1)k})\]
    for $k=0,1,\dots,(n-1)$. The corresponding eigenvalues are
    \[\lambda_k^A = \omega^k + \omega^{(n-1)k}=\omega^k + \omega^{-k}\]
    \[\lambda_k^B=\omega^{jk}+\omega^{(n-j)k}=\omega^{jk}+\omega^{-jk},\]
    respectively.
    
    Therefore the eigenvalues of $I_n-AB$ are
     \[ 1-\lambda_k^A\lambda_k^B 
        = 1 - (\omega^{(j+1)k}+\omega^{-(j+1)k}+\omega^{(j-1)k}+\omega^{-(j-1)k}).\]
    So, $I_n-AB$ is singular only when there is a $k$ so that
    \begin{equation} \label{eqn: eq2} 
        0 = -1 + \omega^{(j+1)k}+\omega^{-(j+1)k}+\omega^{(j-1)k}+\omega^{-(j-1)k}
    \end{equation}
    
     Now if Equation \ref{eqn: eq2} holds and no proper subset of the right hand side sums to $0$, then Theorem \ref{thm: Mann} 
     shows that $\omega^{k(j+1)}$ and $\omega^{k(j-1)}$ are 30th roots of unity. 
     However, it is straightforward to verify, by hand or by computer, that 
     the only set of 30th roots of unity, $\{\zeta_1, \zeta_2\}$, that satisfy
     \begin{equation} \label{eqn: roots of zero sum for Petersen}
        1=\zeta_1 +\zeta_1^{-1}+\zeta_2+\zeta_2^{-1}
    \end{equation}
    is $\{1, e^{2\pi i/3}\}$ or its conjugate. As a result, either $\zeta_1$ or $\zeta_2$ is 1. 
    This means that $\omega^{(j+\epsilon)k}=1$ for some $\epsilon \in \{-1,1\}$.
    In turn, this necessitates that $\frac{k(j+\epsilon)}{n}\in \Z$. 
    Therefore, $k(j+\epsilon)\in n\Z = 3^a\Z$. As Equation \ref{eqn: eq2} requires $k\not=0$, we see that $k\in \{1,\dots, n-1\}$, so that $3^a\nmid k$. This requires $3\mid (j+\epsilon)$ which contradicts the fact that $3\mid j$. 

    It remains to check if a proper subsets of Equation \ref{eqn: eq2} can sum to zero. If a proper subset of Equation \ref{eqn: eq2} sums to zero, then by reality and the fact that $1$ is already ruled out, Equation \ref{eqn: roots of zero sum for Petersen} would be changed to 
    \begin{equation*}
        1=\zeta_1 +\zeta_1^{-1} \text{  and  } 0 = \zeta_2+\zeta_2^{-1}.
    \end{equation*}
    In this case, we would have 
    $\zeta_1 = e^{\pm 2\pi i/ 6}$ and $\zeta_2 = i^{\pm 1}$.
    This can be reduced to having
    $\frac{k(j+\epsilon)}{n} \in \frac{1}{6} + \Z$ and 
    $\frac{k(j-\epsilon)}{n} \in \frac{1}{4} + \Z$.  
    The latter implies that 
    $3^a \mid (4k(j-\epsilon))$ so that $3^a \mid k$, which is also a contradiction. 
\end{proof}

From Lemma \ref{lem: matrix inverse}, we immediately get the following result.

\begin{lemma} \label{lem: soln to M eqn}
    The unique solution to the equation 
    \[Mx = \frac{m}{n} \begin{pmatrix}
        1\\ \vdots \\ 1
    \end{pmatrix}\]
    is 
    \[x=\frac{m}{3n}\begin{pmatrix}
        1\\ \vdots \\ 1
    \end{pmatrix}\]    
\end{lemma}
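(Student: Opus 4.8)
The plan is to simply exhibit the candidate solution and verify it, then invoke Lemma \ref{lem: matrix inverse} for uniqueness. Since $M$ is nonsingular by Lemma \ref{lem: matrix inverse}, the system $Mx = \frac{m}{n}\mathbf{1}$ has a unique solution, so it suffices to check that $x = \frac{m}{3n}\mathbf{1}$ works. Writing $x = \binom{y}{z}$ with $y = z = \frac{m}{3n}\mathbf{1}_n \in \Z^n$, the block equation $M\binom{y}{z} = \frac{m}{n}\binom{\mathbf{1}_n}{\mathbf{1}_n}$ becomes the two equations $Ay + z = \frac{m}{n}\mathbf{1}_n$ and $y + Bz = \frac{m}{n}\mathbf{1}_n$.

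The verification reduces to computing the row sums of $A$ and $B$. Each row of the circulant matrix $A$ has exactly two $1$'s (the sub- and super-diagonal entries, wrapping around), so $A\mathbf{1}_n = 2\mathbf{1}_n$. Likewise, each row of $B$ has exactly two $1$'s (in the generic case $3 \nmid \ldots$, i.e.\ when $B$ does not degenerate), namely in positions shifted by $\pm j$, so $B\mathbf{1}_n = 2\mathbf{1}_n$; and in the degenerate case $n = 3$, $j = 3$ the matrix $B = 2I_3$ still satisfies $B\mathbf{1}_3 = 2\mathbf{1}_3$. Hence $Ay + z = \frac{m}{3n}(A\mathbf{1}_n) + \frac{m}{3n}\mathbf{1}_n = \frac{m}{3n}(2\mathbf{1}_n + \mathbf{1}_n) = \frac{m}{n}\mathbf{1}_n$, and symmetrically $y + Bz = \frac{m}{n}\mathbf{1}_n$. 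This confirms that $\frac{m}{3n}\mathbf{1}_{2n}$ solves the system, and uniqueness from Lemma \ref{lem: matrix inverse} finishes the proof.

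There is essentially no obstacle here: the only point requiring a moment's care is confirming that the row sums of $B$ equal $2$ in all cases, including the degenerate configuration flagged in the setup (where the two "$+j$" and "$-j$" positions collide into a single entry equal to $2$). Everything else is a one-line block-matrix computation. I would present it as a direct check: state the candidate, record $A\mathbf{1}_n = B\mathbf{1}_n = 2\mathbf{1}_n$, substitute, and cite Lemma \ref{lem: matrix inverse} for uniqueness.
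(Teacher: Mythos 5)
Your proposal is correct and matches the paper's (implicit) argument: the paper deduces the lemma immediately from Lemma \ref{lem: matrix inverse}, the point being exactly that nonsingularity gives uniqueness and the constant vector is checked to work since every row of $M$ sums to $3$ (row sums $2$ for $A$ and $B$, including the degenerate $B=2I_3$ case, plus $1$ from the identity blocks). Your write-up just makes this verification explicit, which is fine.
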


Next is the final step of the proof of Theorem \ref{thm: 3-bal col of gen Petersen}. 

\begin{lemma} \label{lem: final Petersen result}
    When $3\mid m$ and $3\mid j$, then $G(m,j)$ does not admit a 3-balanced coloring.
\end{lemma}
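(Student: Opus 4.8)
The strategy is to assume, for contradiction, that $G(m,j)$ admits a 3-balanced coloring $\ell$ when $3 \mid m$ and $3 \mid j$, and to derive a linear system that contradicts Lemma \ref{lem: soln to M eqn}. The first step is to encode the coloring numerically: for each residue class of indices modulo $n = 3^a$ (where $n \mid m$, with $n$ chosen so that $3^a$ is a large enough power of 3 dividing $m$), count how many exterior vertices $v_i$ and how many interior vertices $u_i$ in that class receive a given color. Because being 3-balanced forces strong local constraints — each neighborhood of $v_i$ must see colors $\{\ell(v_{i-1}), \ell(v_{i+1}), \ell(u_i)\}$ using each color exactly once, and each neighborhood of $u_i$ must see $\{\ell(u_{i-j}), \ell(u_{i+j}), \ell(v_i)\}$ using each color exactly once — I would set up vectors recording these color counts and show they satisfy a system governed precisely by the matrices $A$, $B$, and hence $M$.

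Concretely, I would fix a color, say color $0$, and let $p \in \mathbb{Z}^n$ have $k$-th entry equal to the number of indices $i \equiv k \pmod n$ with $\ell(v_i) = 0$, and similarly $q \in \mathbb{Z}^n$ counting interior vertices of color $0$ in each class. The exterior balance condition (each $v_i$ has exactly one neighbor of each color, so summing an indicator of color $0$ over neighborhoods) should give $A p + q = \frac{m}{n}\mathbf{1}$, since the two exterior neighbors $v_{i\pm 1}$ contribute the $A$-action on $p$ and the spoke neighbor $u_i$ contributes $q$; the right-hand side counts, over all $m/n$ indices in each class, that each contributes exactly one color-$0$ neighbor. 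The interior balance condition similarly gives $p + B q = \frac{m}{n}\mathbf{1}$, where $B$ encodes the $\pm j$ shifts on the interior cycle reduced mod $n$. Stacking these yields $M \binom{p}{q} = \frac{m}{n}\mathbf{1}$, and by Lemma \ref{lem: soln to M eqn} the unique solution is $\binom{p}{q} = \frac{m}{3n}\mathbf{1}$.

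The contradiction then comes from divisibility and integrality: the unique solution forces every entry of $p$ and $q$ to equal $\frac{m}{3n}$, i.e. in every residue class mod $n = 3^a$ exactly one third of the indices are colored $0$ — and the same for colors $1$ and $2$. I would need to check this is genuinely impossible, which is where I expect the main subtlety to lie: I must pick $a$ (equivalently $n = 3^a$) large enough relative to the structure of $m$ so that $\frac{m}{3n}$ is not an integer, or more carefully, so that no legitimate coloring can realize these counts. Since $3 \mid m$, writing $m = 3^b m'$ with $3 \nmid m'$, choosing $a = b$ gives $n = 3^b$ and $\frac{m}{3n} = \frac{m'}{3} \notin \mathbb{Z}$, contradicting that $p, q$ have integer entries. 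The remaining care is to verify that the matrix equation $M\binom{p}{q} = \frac{m}{n}\mathbf{1}$ really does hold for the reduced-mod-$n$ counts — in particular that reducing the interior shift $j$ modulo $n$ correctly produces the circulant $B$ (using $3 \mid j$ so the degenerate case of $B$ behaves as described), and that summing the local balance conditions over each class is valid. Once that bookkeeping is pinned down, the integrality obstruction closes the argument.
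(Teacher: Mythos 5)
Your proposal is correct and follows essentially the same route as the paper: count, for a fixed color, the exterior and interior vertices in each residue class mod $n=3^a$, derive the system $M\nu=\frac{m}{n}\mathbf{1}$ from the balance conditions, and invoke Lemma \ref{lem: soln to M eqn} to force each count to equal $\frac{m}{3n}$. The only cosmetic difference is that you take $n$ to be the exact power of $3$ dividing $m$ and conclude by non-integrality of $\frac{m}{3n}$, whereas the paper phrases the same contradiction as ``$3^a\mid m$ implies $3^{a+1}\mid m$.''
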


\begin{proof}
    By way of contradiction, assume $G(m,j)$ is equipped with a 3-balanced coloring, $\ell$. We will obtain a contradiction by showing that, whenever $3^a\mid m$, then $3^{a+1}\mid m$. Recall $n=3^a$.

    Fix $\alpha_0 \in \Z_3$. For each $i\in\Z_n$,
    we will count the number of exterior vertices $v_j$ and interior vertices, $u_j$, with $j\equiv i \mod n$ such that $v_j$, respectively $u_j$, is labeled by $\alpha_0$.
    Precisely, for $i\in \{0,1,2,\ldots, n-1\}$, let
    \[x_i= |\ell^{-1}(\alpha_0) \cap
            \{v_j \mid j\in i+n\Z_m\}| \] 
    \[y_i= |\ell^{-1}(\alpha_0) \cap
            \{u_j \mid j\in i+n\Z_m\}|. \] 
    From 3-balanced, we see that
    \[\frac{m}{n}
        =\sum_{j\in i+n\Z_m} |\ell^{-1}(\alpha_0) \cap N(v_j)|
        = x_{i-1} + x_{i+1} + y_{i}\]
    \[\frac{m}{n}
        =\sum_{j\in i+n\Z_m} |\ell^{-1}(\alpha_0) \cap N(u_j)|
        = y_{i-j} + y_{i+j} + x_{i}.\]
    Let $\nu = \begin{pmatrix}
        x_1,\, \dots,\,  x_n,\, y_1,\, \dots,\, y_n
    \end{pmatrix}^T$. It follows that 
    \[M\nu = \frac{m}{n}\begin{pmatrix}
        1\\ \vdots \\ 1
    \end{pmatrix}.\]
    By Lemma \ref{lem: soln to M eqn}, it follows that 
    \[\nu = \frac{m}{3n} \begin{pmatrix}
        1\\ \vdots \\ 1
    \end{pmatrix}.\]
    As $x_i, y_i \in\Z$, we see that $3n\mid m$ so that $3^{a+1}\mid m$ and we are done.
\end{proof}

\section{3-Balanced Generalized Pappus Graphs} \label{sec: gen Pappus}

The \emph{Pappus graph} is a symmetric cubic graph on 18 vertices that is easily seen to be 3-balanced in Figure \ref{fig: Pappus Graph}. A \emph{generalized Pappus graph} was introduced in \cite{GeneralizedPappus}. We further extend their definition below.
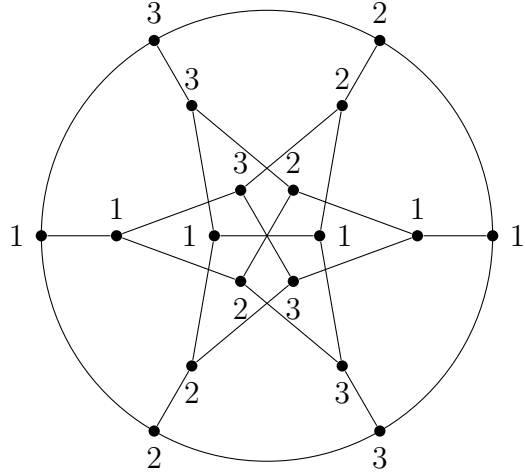
\begin{figure}[H]
\centering
        \begin{tikzpicture} \label{P(18,3,9)}
    \def\n{6} 
    \def\m{6} 
    \def\l{6} 
    \def\radius{3cm} 
    \def\middleRadius{2cm} 
    \def\innerRadius{.7cm} 
          
    \draw (0,0) circle (\radius);
    \node[circle, fill, inner sep=1.5pt, label=right:1] (v1) at ({360/\n * (0)}:\radius) {};
    \node[circle, fill, inner sep=1.5pt, label=above:2] (v2) at ({360/\n * (1)}:\radius) {};
    \node[circle, fill, inner sep=1.5pt, label=above:3] (v3) at ({360/\n * (2)}:\radius) {};
    \node[circle, fill, inner sep=1.5pt, label=left:1] (v4) at ({360/\n * (3)}:\radius) {};
    \node[circle, fill, inner sep=1.5pt, label=below:2] (v5) at ({360/\n * (4)}:\radius) {};
    \node[circle, fill, inner sep=1.5pt, label=below:3] (v6) at ({360/\n * (5)}:\radius) {};
              
    \node[circle, fill, inner sep=1.5pt,label=above:1] (u1) at ({360/\m * (0)}:\middleRadius) {};
    \node[circle, fill, inner sep=1.5pt, label=above:2] (u2) at ({360/\m * (1)}:\middleRadius) {};
    \node[circle, fill, inner sep=1.5pt, label=above:3] (u3) at ({360/\m * (2)}:\middleRadius) {};
    \node[circle, fill, inner sep=1.5pt,label=above:1] (u4) at ({360/\m * (3)}:\middleRadius) {};
    \node[circle, fill, inner sep=1.5pt, label=below:2] (u5) at ({360/\m * (4)}:\middleRadius) {};
    \node[circle, fill, inner sep=1.5pt, label=below:3] (u6) at ({360/\m * (5)}:\middleRadius) {};

    \node[circle, fill, inner sep=1.5pt, label=right:1] (w1) at ({360/\l * (0)}:\innerRadius) {};
    \node[circle, fill, inner sep=1.5pt, label=above:2] (w2) at ({360/\l * (1)}:\innerRadius) {};
    \node[circle, fill, inner sep=1.5pt, label=above:3] (w3) at ({360/\l * (2)}:\innerRadius) {};
    \node[circle, fill, inner sep=1.5pt, label=left:1] (w4) at ({360/\l * (3)}:\innerRadius) {};
    \node[circle, fill, inner sep=1.5pt, label=below:2] (w5) at ({360/\l * (4)}:\innerRadius) {};
    \node[circle, fill, inner sep=1.5pt, label=below:3] (w6) at ({360/\l * (5)}:\innerRadius) {};
    
    \draw (v1) -- (u1);    \draw (v2) -- (u2);    \draw (v3) -- (u3);    \draw (v4) -- (u4);    \draw (v5) -- (u5);    \draw (v6) -- (u6); 
    \draw (u1) -- (w2);    \draw (u1) -- (w6);    \draw (u2) -- (w1);    \draw (u2) -- (w3);    \draw (u3) -- (w2);    \draw (u3) -- (w4);    \draw (u4) -- (w3);    \draw (u4) -- (w5);    \draw (u5) -- (w4);    \draw (u5) -- (w6);    \draw (u6) -- (w1);    \draw (u6) -- (w5);
    \draw (w1) -- (w4);    \draw (w5) -- (w2);    \draw (w3) -- (w6);
    \end{tikzpicture}
    \caption{Pappus Graph, $P(6,1,3)$}
    \label{fig: Pappus Graph}
\end{figure}

\begin{definition} \label{def: gen pappus graph}
    Let $m,j,k \in \N$ with $m\geq4$, $1\leq j <\frac{m}{2}$, and $1\leq k\leq \frac{m}{2}$.
    The \emph{generalized Pappus graph}, written $P(m,j,k)$, has vertex set 
    $\{v_i, u_i, w_i \mid i\in \Z_m\}$ and edges
    \[ v_i v_{i+1}, \,\, v_i u_i, \,\, u_i w_{i+j}, \,\, u_i w_{i-j} \text{ and } w_i w_{i+k}. \]
\end{definition}

We may refer to the $\{v_i\}$ as the \textit{exterior vertices}, the $\{u_i\}$ as the \textit{middle vertices}, and the $\{w_i\}$ as the \textit{interior vertices}. Notice that the degree of each vertex of $P(m,j,k)$ is divisible by $3$ if and only if $P(m,j,k)$ is cubic if and only if \[ k=\frac{m}{2}.\]
Therefore, a generalized Pappus graph that is 3-balanced necessarily has $m$ even and $k=\frac{m}{2}$. In fact, we next show that it is further necessary to require $6\mid m$.

\begin{lemma}
    Let $m,j \in \N$ with $m\geq4$, $m$ even, and $1\leq j <\frac{m}{2}$. If $P(m,j,\frac{m}{2})$ admits a balanced 3-coloring, then $3\mid m$.
\end{lemma}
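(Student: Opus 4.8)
The plan is to run a global double-counting argument, carried out one color at a time and one ``layer'' of vertices at a time. Since $P(m,j,\tfrac m2)$ is cubic (as just noted), each open neighborhood is a set of three distinct vertices, and the neighbor sets are, directly from Definition~\ref{def: gen pappus graph},
\[ N(v_i)=\{v_{i-1},v_{i+1},u_i\},\qquad N(u_i)=\{v_i,w_{i+j},w_{i-j}\},\qquad N(w_i)=\{w_{i+m/2},u_{i-j},u_{i+j}\}. \]
(That $w_{i+j}\ne w_{i-j}$ and $u_{i-j}\ne u_{i+j}$ is also immediate from $0<2j<m$.)

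Now suppose $\ell$ is a $3$-balanced coloring of $P(m,j,\tfrac m2)$ and fix a color $\alpha\in\Z_3$. Put
\[ X=|\{i\in\Z_m:\ell(v_i)=\alpha\}|,\qquad Y=|\{i\in\Z_m:\ell(u_i)=\alpha\}|,\qquad Z=|\{i\in\Z_m:\ell(w_i)=\alpha\}|, \]
all non-negative integers, and note that for any fixed $c\in\Z_m$ the map $i\mapsto i+c$ permutes $\Z_m$, so the number of $i$ with $\ell(v_{i+c})=\alpha$ is again $X$, and similarly with $u$ or $w$ in place of $v$. I would then sum the $3$-balanced identity $|N(x)\cap V_\alpha|=1$ over the $m$ vertices $x=v_i$, then over the $m$ vertices $x=u_i$, then over the $m$ vertices $x=w_i$. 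Reading off the neighbor sets above, the three sums evaluate to $2X+Y$, $X+2Z$, and $2Y+Z$, respectively, while each also equals $m$ (a sum of $m$ ones). Hence
\[ 2X+Y=m,\qquad X+2Z=m,\qquad 2Y+Z=m. \]

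Solving this system — for instance $Y=m-2X$, then $Z=m-2Y=4X-m$, and substituting into $X+2Z=m$ gives $9X=3m$ — yields $X=\tfrac m3$. Since $X\in\Z$, this forces $3\mid m$. I do not expect any real obstacle here: the only points requiring care are the distinctness of the listed neighbors (so that each open neighborhood genuinely contributes three terms) and the re-indexing that collapses each shifted count back to $X$, $Y$, or $Z$. Combined with the already-noted requirement that $m$ be even, this establishes $6\mid m$, as announced just before the lemma.
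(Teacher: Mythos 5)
Your proof is correct and is essentially the paper's argument: the paper likewise sums the balanced condition $|N(x)\cap V_\alpha|=1$ over each layer of vertices (only refined by residue mod $2$, giving six counts instead of your three $X,Y,Z$) and solves the resulting linear system to force each count to equal $\tfrac{m}{6}$, hence $3\mid m$. Your coarser aggregation over whole layers yields the same conclusion and is, if anything, slightly simpler.
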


\begin{proof}
    Assume $P(m,j,\frac{m}{2})$ is equipped with a 3-balanced coloring, $\ell$. 
    Fix $\alpha_0 \in \Z_3$. For each $i\in\Z_2$,
    we will count the number of exterior vertices
    $v_j$ with $j\equiv i \mod 2$ and $v_j$
    labeled by $\alpha_0$.
    We will similarly count the middle vertices $u_j$, and the interior vertices $w_j$.
    Precisely, let
    \[x_i= |\ell^{-1}(\alpha_0) \cap
            \{v_j \mid j\in i+2\Z_m\}| \] 
    \[y_i= |\ell^{-1}(\alpha_0) \cap
            \{u_j \mid j\in i+2\Z_m\}| \] 
    \[z_i= |\ell^{-1}(\alpha_0) \cap
            \{w_j \mid j\in i+2\Z_m\}|. \] 
    From 3-balanced, we see that
    \[\frac{m}{2}
        =\sum_{j\in i+2\Z_m} |\ell^{-1}(\alpha_0) \cap N(v_j)|
        =  2x_{i+1} + y_{i}\]
    \[\frac{m}{2}
        =\sum_{j\in i+2\Z_m} |\ell^{-1}(\alpha_0) \cap N(u_j)|
        = 2z_{i+j} + x_{i}\]
     \[\frac{m}{2}
        =\sum_{j\in i+2\Z_m} |\ell^{-1}(\alpha_0) \cap N(w_j)|
        = 2y_{i+j} + z_{i}\]
    This yields 6 equations each summing to $\frac{m}{2}$. 
    It is straightforward to verity that the only solution to these equations is
    \[v_0=v_1=u_0=u_1=w_0=w_1=\frac{m}{6}.\]
    From this we see that $3 \mid m$.
    \end{proof}

Next we show that when $6\mid m$, but $3\nmid j$, then $P(m,j,\frac{m}{2})$ is 3-balanced.

\begin{lemma}
    If $6\mid m$, but $3 \nmid j$, then $P(m,j,\frac{m}{2})$ admits a balanced 3-coloring.
\end{lemma}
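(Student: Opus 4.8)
The plan is to exhibit an explicit 3-balanced coloring of $P(m,j,\frac{m}{2})$ under the hypotheses $6\mid m$ and $3\nmid j$, mimicking the successful strategy for generalized Petersen graphs in Lemma \ref{lem: basic Petersen 3-balanced easy result}(2). The natural first attempt is the ``diagonal'' coloring $\ell(v_i)=\ell(u_i)=\ell(w_i)=i \bmod 3$; this is well-defined because $3\mid m$. I would then check each of the three neighborhood types. For an exterior vertex $v_i$, the neighbors are $v_{i-1},v_{i+1},u_i$, which receive colors $i-1,i+1,i$: these are all three residues mod $3$, so $N(v_i)$ is balanced. For a middle vertex $u_i$, the neighbors are $v_i,w_{i+j},w_{i-j}$, receiving colors $i,\,i+j,\,i-j$; since $3\nmid j$, the three values $0,j,-j$ are distinct mod $3$, so this neighborhood is balanced too. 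The only delicate case is an interior vertex $w_i$, whose neighbors are $u_{i+j},u_{i-j},w_{i+m/2}$, receiving colors $i+j,\,i-j,\,i+\frac{m}{2}\bmod 3$. For these to be the three distinct residues we need $\frac{m}{2}\not\equiv \pm j \pmod 3$, and also $j\not\equiv -j$, i.e. $3\nmid j$ (already assumed).

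So the main obstacle is precisely the interior-vertex condition: the plain diagonal coloring works only when $\frac{m}{2}\equiv 0 \pmod 3$, equivalently $6\mid m$ but I actually need $m\equiv 0 \pmod 6$ together with... wait — if $6\mid m$ then $\frac{m}{2}$ is a multiple of $3$, so $\frac{m}{2}\equiv 0\pmod 3$ automatically, and the three colors at $w_i$ are $i,\,i+j,\,i-j$, which are distinct mod $3$ exactly because $3\nmid j$. Hence under the stated hypotheses the diagonal coloring already works, and no obstacle remains — the verification is the three routine case-checks above. (If one only assumed $3\mid m$ rather than $6\mid m$, so that $\frac{m}{2}$ need not be an integer or need not be divisible by $3$, one would have to perturb the coloring on the interior cycle, e.g. shift $\ell(w_i)$ by a correction term depending on $i \bmod (\text{something})$, and then re-examine the $u$- and $w$-neighborhoods; but that is not needed here.)

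Concretely, I would write: set $\ell(v_i)=\ell(u_i)=\ell(w_i)\equiv i \pmod 3$, which is well-defined since $3\mid m$. Then verify $N(v_i)$ gets colors $\{i-1,i,i+1\}$; $N(u_i)$ gets colors $\{i-j,i,i+j\}$, distinct mod $3$ since $3\nmid j$; and $N(w_i)$ gets colors $\{i-j,i+j,i+\tfrac{m}{2}\}=\{i-j,i,i+j\}$ since $6\mid m$ forces $\tfrac{m}{2}\equiv 0\pmod 3$, again distinct mod $3$ since $3\nmid j$. In each case the open neighborhood contains one vertex of each color, so $\ell$ is 3-balanced. This completes the proof; the only ``content'' is noticing that $6\mid m$ is exactly what makes the interior cycle's chord $w_iw_{i+m/2}$ colored compatibly with the diagonal pattern.
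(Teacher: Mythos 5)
Your proposal is correct and uses essentially the same argument as the paper: the diagonal coloring $\ell(v_i)=\ell(u_i)=\ell(w_i)\equiv i \pmod 3$, with the same three neighborhood checks and the same observation that $6\mid m$ forces $\tfrac{m}{2}\equiv 0 \pmod 3$ so that $N(w_i)$ receives colors $\{i-j,\,i,\,i+j\}$, distinct because $3\nmid j$. The only difference is stylistic: your exploratory middle paragraph (the ``wait'' detour) could be cut, since the final verification already stands on its own.
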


\begin{proof}
    We use the coloring 
    $\ell(v_i)=\ell(u_i) =\ell(w_i)=i$. This coloring results in $N(v_i)$ being colored with 
    $\{i-1,i,i+1\}$, which is 3-balanced. It also results in $N(u_i)$ being colored with 
    $\{i-j,i,i+j\}$. These colors overlap if and only if $j \equiv 0$. As $3\nmid j$, this does not happen and the neighborhood is 3-balanced. Lastly, $N(w_i)$ is colored with $\{i+\frac{m}{2}, i-j,i+j\}$. Since $6\mid m$, we have $i+\frac{m}{2} \equiv i$, so this neighborhood is 3-balanced.
\end{proof}

Throughout the remainder of this section, suppose 
$m,j\in\N$ with $m\geq 4$, 
$1\leq j < \frac{m}{2}$, and
\[ 6\mid m \text{ and } 3\mid j. \]
We will eventually show that $P(m,j,\frac{m}{2})$ is not 3-balanced in this case.

Fix 
\[ n = 3^a, \]
with $a\in\Z^+$, so that 
\[  n \mid m.\]
Now consider the circulant matrices $A,B \in M_{n\times n}(\Z)$ as in \S\ref{sec: Gen Pet} and additionally consider the circulant matrix $C \in M_{n\times n}(\Z)$ given by
\[  C = \begin{pmatrix} 
        0 &  \ldots & 0 & 1& 0 &0&0&\ldots & 0 \\
        0 &  \ldots & 0 & 0& 1 &0&0&\ldots & 0 \\
        0 &  \ldots & 0 & 0& 0 &1&0&\ldots & 0 \\
        &&&&\vdots
    \end{pmatrix}\]
where the 1 in the first row is in column $\frac{m}{2}+1$.
Finally, let $L\in M_{3n\times 3n}(\Z)$ be given by
\[  L = \begin{pmatrix}
        A & I_n & 0_n\\
        I_n & 0_n & B\\
        0_n & B & C
\end{pmatrix}\]
where $0_n$ is the zero matrix of size $n$.

\begin{lemma}
    If $3\mid j$, then
    the matrix, $L$, is nonsingular.
\end{lemma}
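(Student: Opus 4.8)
The plan is to follow the same strategy used in Lemma \ref{lem: matrix inverse}, namely to produce an explicit block multiplier that diagonalizes $L$ up to a Schur-complement-type argument, and then to analyze the eigenvalues of the resulting single $n\times n$ circulant via Mann's theorem. First I would look for a matrix $N$ so that $LN$ is block diagonal. Since $A$, $B$, $C$ are all circulant of the same size, they commute, so I can treat the block matrix as a $3\times 3$ matrix over the commutative ring $\Z[A,B,C]$ and compute its determinant formally: $\det L$ (as an element of that ring) equals $A(0\cdot C - B^2) - I_n(I_n\cdot C - 0) + 0 = -AB^2 - C$. Hence $L$ is nonsingular exactly when the circulant $AB^2 + C$ is nonsingular. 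Concretely, one multiplies $L$ on the right by the matrix whose blocks realize the cofactor expansion (entries $-B^2$, $-C+$ stuff, $I_n$, etc.), obtaining a block-diagonal matrix with one trivial identity block and the block $-(AB^2+C)$ (or $I_n$ times a unit, depending on normalization) — the routine check is just block arithmetic using commutativity.

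Next I would diagonalize simultaneously. Writing $\omega = e^{2\pi i/n}$ and $v_k = (1,\omega^k,\dots,\omega^{(n-1)k})$, the eigenvalues are $\lambda_k^A = \omega^k+\omega^{-k}$, $\lambda_k^B = \omega^{jk}+\omega^{-jk}$ as before, and $\lambda_k^C = \omega^{(m/2)k}$; since $6\mid m$ and $n=3^a\mid m$ we have $2\mid (m/n)$, so $\omega^{(m/2)k} = (\omega^{m/n})^{(n/2?)}$ — more carefully, $(m/2)k \bmod n$ is $0$ when $n\mid (m/2)k$; in fact since $6\mid m$, $m/2$ is a multiple of $3$ and one checks $\lambda_k^C$ is always an $n$-th root of unity that here reduces to $\pm 1$, indeed to a cube root of unity power. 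I would then write down the eigenvalue of $AB^2+C$: $\lambda_k^A(\lambda_k^B)^2 + \lambda_k^C$, expand $(\omega^{jk}+\omega^{-jk})^2 = \omega^{2jk}+2+\omega^{-2jk}$, multiply by $\omega^k+\omega^{-k}$, and collect. The matrix $L$ is singular iff for some $k$ this expression vanishes, i.e. some explicit sum of a bounded number of roots of unity (the terms $\omega^{(2j\pm1)k}$, $\omega^{\pm k}$, each with multiplicity, plus the $\lambda_k^C$ term) equals zero.

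Then I would invoke Theorem \ref{thm: Mann}: any vanishing subsum, minimal or not, forces the relevant ratios of these roots of unity to be $30$th roots of unity (the relevant primes are $2,3,5$), after which a finite case check — exactly analogous to the $\{1,e^{2\pi i/3}\}$ analysis in Lemma \ref{lem: matrix inverse} — pins down which combinations of $30$th roots of unity can occur. In each surviving case one gets a divisibility condition of the form $3^a \mid (\text{small integer})\cdot k(j\pm 1)$ or similar, hence $3^a\mid k$ unless $3\mid(j\pm1)$; the latter contradicts $3\mid j$, and $3^a\mid k$ contradicts $k\in\{1,\dots,n-1\}$. Handling the possible vanishing proper subsums (including the degenerate case $j=n=3$ where $B=2I_n$) requires enumerating a few more configurations, exactly as in the Petersen proof.

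The main obstacle I expect is the combinatorial explosion in the root-of-unity analysis: $AB^2+C$ has an eigenvalue that is a sum of more terms (up to six or seven roots of unity counted with multiplicity, versus five in the Petersen case), so there are more minimal vanishing subsums to rule out, and more sub-subsum cases after applying Mann. The key simplification I would exploit is that $\lambda_k^C$ is forced (by $6\mid m$) to be a very restricted root of unity — essentially $\pm1$ or a cube root of unity — which collapses most cases, and that all the other exponents are of the form $k\cdot(\text{integer involving }j)$ with $3\mid j$, so every surviving divisibility constraint ultimately reads $3^a \mid k\cdot(\text{unit mod }3)$. Modulo carefully tabulating the $30$th-root-of-unity identities, the argument is forced to succeed by the same mechanism as Lemma \ref{lem: matrix inverse}.
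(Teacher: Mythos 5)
Your route is the paper's own: reduce nonsingularity of $L$ to nonsingularity of the single circulant $AB^2+C=BAB+C$ (the paper row-reduces; your formal block determinant over the commutative ring of circulants is an equivalent Schur-type reduction), diagonalize $A,B,C$ simultaneously, note that the $C$-eigenvalue collapses (indeed, since $m$ is even and $n=3^a$ is odd, $n\mid \frac m2$, so $\lambda_k^C=1$ exactly — your ``$\frac m2$ is a multiple of $3$'' remark is not the relevant point, but the conclusion is right), and reduce singularity to the vanishing of $1+\omega^{k(2j+1)}+\omega^{-k(2j+1)}+\omega^{k(2j-1)}+\omega^{-k(2j-1)}+2\omega^{k}+2\omega^{-k}$, to be ruled out via Theorem \ref{thm: Mann} and $3\mid j$. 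So the skeleton matches Lemma \ref{lem: matrix inverse} and the paper's proof.

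However, there is a concrete error at the key quantitative step: the vanishing relation has $r=7$ summands (the terms $1,\ \zeta_1^{\pm1},\ \zeta_2^{\pm1}$ with coefficient $1$ and $\zeta_3^{\pm1}$ with coefficient $2$), so in the minimal case Mann's theorem only forces the ratios to be $\prod_{p\le 7}p=210$th roots of unity, not $30$th; your claim that ``the relevant primes are $2,3,5$'' undercounts the summands, and a finite check over $30$th roots of unity is therefore unjustified — a priori solutions involving $7$th, $21$st, $35$th, or $105$th roots could exist and your argument would never see them. (The paper's computer search is over $210$th roots; the fact that the surviving solutions turn out to be $6$th roots of unity is an output of that larger search, not an input.) Also, Mann applies to subsums with no vanishing proper subsum, so ``minimal or not'' needs the usual splitting into minimal pieces. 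Finally, the proper-subsum analysis is not ``a few more configurations, exactly as in the Petersen proof'': it is the bulk of the paper's argument, and in the case $\zeta_i+\zeta_i^{-1}=-1$ it uses $3\mid j$ again in an essential way (forcing $k\in\{\frac n3,\frac{2n}3\}$ and then a contradiction via $2\zeta_3+2\zeta_3^{-1}=-2$), with separate eliminations for $|S|=3,4,5$, e.g.\ ruling out $\zeta_i+2\zeta_3=0$ because a ratio of roots of unity cannot equal $-2$. As written, your proposal names these cases but supplies no arguments for them, so it is incomplete exactly where the Pappus case requires work beyond Lemma \ref{lem: matrix inverse}.
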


\begin{proof}
    By row-reducing, we can immediately see that $L$ is equivalent to 
    \[   \begin{pmatrix}
        I_n & 0 & B\\
        0 & I_n & -AB\\
        0 & 0 & BAB+C
\end{pmatrix}.\]
    Therefore, it suffices to show that $BAB+C$ is nonsingular.

    Write $\omega = e^{2\pi i/n}$. As $A, B$ and $C$ are circulant, 
    their eigenvectors are
    \[v_k = (1, \omega^k, \omega^{2k}, \omega^{3k}, \dots, \omega^{(n-1)k})\]
    for $k=0,1,\dots,(n-1)$. The corresponding eigenvalues are
    \[\lambda_k^A = \omega^k + \omega^{(n-1)k}=\omega^k + \omega^{-k}\]
    \[\lambda_k^B=\omega^{jk}+\omega^{(n-j)k}=\omega^{jk}+\omega^{-jk},\]
    \[\lambda_k^C=\omega^{(n-\frac{m}{2})k}=\omega^{-\frac{mk}{2}},\]
    respectively.
    
    Therefore the eigenvalues of $BAB+C$ are
    \begin{equation} \label{eqn: Pap preliminary evals}
        \lambda_k^B\lambda_k^A\lambda_k^B+\lambda_k^C=(\omega^k + \omega^{-k})(\omega^{kj}+\omega^{-kj})^2+\omega^{-\frac{mk}{2}}
    \end{equation}
    and $BAB+C$ is singular if and only if there exists a $k\in \{0,\dots,n-1\}$ for which
    Equation \ref{eqn: Pap preliminary evals} is zero.    
    Now, as $n$ is odd, there is no $k$ such that $\omega^{-\frac{mk}{2}}=-1$.
    Therefore, as $(\omega^k + \omega^{-k})(\omega^{kj}+\omega^{-kj})^2$ is real, making Equation \ref{eqn: Pap preliminary evals} zero requires
    $\omega^{-\frac{mk}{2}}=1$ and $1 + (\omega^k + \omega^{-k}) \allowbreak (\omega^{kj}+\omega^{-kj})^2  = 0$.
    Expanding we see that $BAB+C$ is singular exactly when there is a $k$ such that
    \begin{equation} \label{eqn: Pappus omega} 
        0 =1+ \omega^{k(2j+1)}+  \omega^{-k(2j+1)}
         + \omega^{k(2j-1)} + \omega^{-k(2j-1)}
        +2\omega^{k}+2\omega^{-k}
    \end{equation}
    
     Now if Equation $\ref{eqn: Pappus omega}$ holds and no proper subset sums to 0, Theorem \ref{thm: Mann} shows that $\omega^{k}$, $\omega^{k(2j+1)}$, and $\omega^{k(2j-1)}$ are 210th roots of unity. 
     However, it is straightforward to verify by computer, that, up to conjugation and relabeling of $i=1$ and $i=2$, there are three sets of 210th roots of unity, $\zeta_1, \zeta_2, \zeta_3$, that satisfy
    \begin{equation}\label{eq: Pappus zeta}
         0=1+\zeta_1 +\zeta_1^{-1}+\zeta_2+\zeta_2^{-1}+2\zeta_3+2\zeta_3^{-1}
     \end{equation}
    and all are of the form $\{\omega^a, \omega^b, \omega^c\}$ with $a,b,c \in \{0, 35, 70, 105\}$.
    The three sets of solutions are 
    $\{105, 70, 35 \}$, $\{70, 0, 70 \}$, and $\{35, 0, 105 \}$.
    It is straightforward to show that each of these possibilities lead to a contradiction under the hypothesis that $3\mid j$. For example, in the case of $\{105, 70, 35 \}$, 
    looking at Equation \ref{eqn: Pappus omega}, shows that $k\equiv 35$ and that
    $35(2j+\epsilon-2)\equiv0$ and $35(2j-\epsilon-3)\equiv 0$ $\mod n$ for some $\epsilon\in\{\pm 1\}$. As $n=3^a$, at the very least, this implies that $3\mid (\epsilon -2)$ and $3\mid (-\epsilon)$ which gives a contradiction. The other cases are handled similarly.     
    
    We now turn to the case that proper subsets of Equations \ref{eqn: Pappus omega} and \ref{eq: Pappus zeta} sum to 0.
    Define $S$ to be the smallest proper subset of Equation \ref{eq: Pappus zeta} that includes 1, and $S^c$ to be its complement. Therefore the elements of $S$ and $S^c$ sum to 0 and $2 \leq |S| \leq 5$.
    
    First, observe that $|S|\neq 2$, as $n$ is odd, and therefore $\zeta_i \neq -1$. 
    In a similar fashion, we record a number of related forbidden configurations for future use. 
    The first is that     
    \begin{equation}\label{eq: pappus case 1}
        \zeta_i+\zeta_i^{-1}=0
    \end{equation} 
    is not possible for $i=1,2,3$ since, again, $n$ is odd so that $\zeta_i \not= \pm i$.  
    Next, note that we can never have 
    \begin{equation}\label{eq: pappus case 3}
        2\zeta_3+2\zeta_3^{-1}=\epsilon, 
    \end{equation}
    for $\epsilon \in \{\pm1\}$, as the solutions to 
    the associated quadratic equation (multiplying by $\zeta_3$) are easily seen not to be 6th roots of unity as required by Theorem \ref{thm: Mann}.
    Finally, we cannot have 
    \begin{equation}\label{eq: pappus case 2}
        \zeta_i+\zeta_i^{-1}=-1
    \end{equation}
    as $\zeta_i$ would be a 3rd root of unity, which can only occur when
    $\frac{k(2j+\epsilon)}{n} \in \pm\frac{1}{3} + \Z$.
    In turn, this is the same as
    $k(2j+\epsilon) \equiv \pm\frac{n}{3} \mod n$.
    As $n=3^a$, this implies that $3^{a-1} \mid k(2j+\epsilon)$. Since $3 \mid j$, $3 \nmid (2j+\epsilon)$ and so $3^{a-1} \mid k$. Since $0 < k < 3^a$, we see that 
    $k \in \{ 3^{a-1}, 2\cdot 3^{a-1} \} = \{ \frac{n}{3}, \frac{2n}{3} \}$.
    This heavily restricts Equation \ref{eq: Pappus zeta}, as for these $k$, 
    $2\zeta_3+2\zeta_3^{-1}=-2$. Therefore, Equation \ref{eq: Pappus zeta} requires 
    $\zeta_{i+1}+\zeta_{i+1}^{-1}=2$ 
    which is only possible when $k=0$ and is a contradiction.
    
    Consider the case $|S|=3$. 
    Equations \ref{eq: pappus case 3} and \ref{eq: pappus case 2} show that the only possibility would require
    \[1 +\zeta_i +2\zeta_3=0.\]
    In this case, all roots of unity involved are 6th roots of unity by Theorem \ref{thm: Mann}.
    It is straightforward to verify by hand or by computer program that all such solutions
    have $\zeta_3 \in \{\pm i\}$, which is impossible as $n$ is odd.
    
    Turn now to $|S|=4$. 
    If $S^C$ contains a conjugate pair, then the third element of $S^C$ is real, and thus the conjugate pair sums to $\pm 1$ or $\pm 2$. 
    The sentence before Equation \ref{eq: pappus case 1} rules out $1$ and Equation \ref{eq: pappus case 2} rules out $-1$.
    The conjugate pair can only sum to $2$ if $k=0$, which violates Equation \ref{eq: Pappus zeta}, and the cannot sum to $-2$ as $n$ is odd. 
    As a result, $S^C$ must contains one of each of $\{\zeta_1^{\pm1}, \zeta_2^{\pm 1}, 2\zeta_3^{\pm1}\}$ and $S$ consists of $1$ and the conjugates of $S^C$. As the sum of the elements of $S$ is $0$, this forces the sum of the elements of $S^C$ to be $-1$, which is a contradiction. 
    
    Finally, suppose $|S|=5$. But then $|S^c|=2$, so by Equation \ref{eq: pappus case 1} we only need to consider the case of
    \[\zeta_i +2\zeta_3=0.\]
    However, this would force $\zeta_i/\zeta_3^{-1} = -2$, which is impossible.
\end{proof}

Now note that Lemma \ref{lem: soln to M eqn} holds for the matrix $L$ in place of $M$. 

\begin{lemma}
    When $6\mid m$ and $3\mid j$, then $P(m,j, \frac{m}{2})$ does not admit a 3-balanced coloring.
\end{lemma}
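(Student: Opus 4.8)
The plan is to follow the proof of Lemma~\ref{lem: final Petersen result} essentially line for line, with the three-block matrix $L$ playing the role of the two-block matrix $M$. Suppose, for contradiction, that $P(m,j,\frac m2)$ admits a 3-balanced coloring $\ell$. I will show that if $3^a\mid m$ for some $a\in\Z^+$, then $3^{a+1}\mid m$; since $3\mid m$ (because $6\mid m$), this forces $m$ to be divisible by every power of $3$, contradicting the finiteness of $m$.

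So fix $a\in\Z^+$ with $3^a\mid m$, put $n=3^a$, and fix a color $\alpha_0\in\Z_3$. For each $i\in\{0,1,\dots,n-1\}$ let $x_i$, $y_i$, and $z_i$ be the number of exterior, middle, and interior vertices, respectively, whose index lies in $i+n\Z_m$ and whose $\ell$-color is $\alpha_0$. Since $\ell$ is 3-balanced and $P(m,j,\frac m2)$ is cubic, every open neighborhood contains exactly one vertex of color $\alpha_0$. Summing this count over the $m/n$ exterior vertices with index $\equiv i \pmod n$, then over the corresponding middle vertices, then over the corresponding interior vertices, and using $N(v_i)=\{v_{i-1},v_{i+1},u_i\}$, $N(u_i)=\{v_i,w_{i+j},w_{i-j}\}$, and $N(w_i)=\{u_{i-j},u_{i+j},w_{i+\frac m2}\}$ (here $w_{i+\frac m2}$ is the unique $w$-neighbor of $w_i$ precisely because $k=\frac m2$ makes $w_{i+k}=w_{i-k}$), we obtain
\[
\frac{m}{n}=x_{i-1}+x_{i+1}+y_i,\qquad
\frac{m}{n}=x_i+z_{i+j}+z_{i-j},\qquad
\frac{m}{n}=y_{i-j}+y_{i+j}+z_{i+\frac m2}.
\]
Assembling these $3n$ equations into $\nu=(x_1,\dots,x_n,y_1,\dots,y_n,z_1,\dots,z_n)^T$, the three block-rows record exactly the circulant shifts by $\{\pm1\}$, $\{\pm j\}$, and $\{\frac m2\}$, so the system is precisely $L\nu=\frac m n(1,\dots,1)^T$.

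Now $L$ is nonsingular by the preceding lemma, and since each row of $L$ sums to $3$ the vector $\frac{m}{3n}(1,\dots,1)^T$ solves $L\nu=\frac m n(1,\dots,1)^T$; by uniqueness (the observation that Lemma~\ref{lem: soln to M eqn} holds with $L$ in place of $M$) we conclude $\nu=\frac{m}{3n}(1,\dots,1)^T$. But the entries of $\nu$ are integers, so $3n\mid m$, that is $3^{a+1}\mid m$, completing the induction and hence the contradiction. I expect no genuine obstacle here: all the difficulty lives in the nonsingularity of $L$, which is already established, and the only points requiring a moment of care are the neighborhood bookkeeping displayed above and the fact that $\frac{m}{3n}$ need not be an integer a priori — it is exactly this potential non-integrality that is exploited to force $3^{a+1}\mid m$.
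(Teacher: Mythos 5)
Your proposal matches the paper's own argument essentially step for step: the same counts $x_i,y_i,z_i$ over residue classes mod $n=3^a$, the same assembly into $L\nu=\frac{m}{n}(1,\dots,1)^T$, and the same appeal to the nonsingularity of $L$ plus the analogue of Lemma \ref{lem: soln to M eqn} to force $\nu=\frac{m}{3n}(1,\dots,1)^T$, whence integrality gives $3^{a+1}\mid m$ and the contradiction. The proof is correct and takes the same approach as the paper (your neighborhood bookkeeping, including the observation that $k=\frac{m}{2}$ gives each $w_i$ a single $w$-neighbor, is the intended reading).
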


\begin{proof}
    By way of contradiction, assume $P(m,j, \frac{m}{2})$ is equipped with a 3-balanced coloring, $\ell$. We will obtain a contradiction by showing that, whenever $3^a\mid m$, then $3^{a+1}\mid m$. Recall $n=3^a$.

    Fix $\alpha_0 \in \Z_3$. As before, for $i\in\Z_n$,
    we will count the number of exterior, middle, and interior vertices, 
    $v_j$, $u_j$, and $w_j$ labeled by $\alpha_0$ with $j\equiv i\mod n$
    Precisely, for $i\in \{0,1,2,\ldots, n-1\}$, let
    \[x_i= |\ell^{-1}(\alpha_0) \cap
            \{v_j \mid j\in i+n\Z_m\}| \] 
    \[y_i= |\ell^{-1}(\alpha_0) \cap
            \{u_j \mid j\in i+n\Z_m\}| \] 
    \[z_i= |\ell^{-1}(\alpha_0) \cap
            \{w_j \mid j\in i+n\Z_m\}|. \] 
    From 3-balanced, we see that
    \[\frac{m}{n}
        =\sum_{j\in i+n\Z_m} |\ell^{-1}(\alpha_0) \cap N(v_j)|
        = x_{i-1} + x_{i+1} + y_{i}\]
    \[\frac{m}{n}
        =\sum_{j\in i+n\Z_m} |\ell^{-1}(\alpha_0) \cap N(u_j)|
        = z_{i-j} + z_{i+j} + x_{i}.\]
    \[\frac{m}{n}
        =\sum_{j\in i+n\Z_m} |\ell^{-1}(\alpha_0) \cap N(w_j)|
        = y_{i-j} + y_{i+j} + w_{i+\frac{m}{2}}.\]
     Let $\mu = \begin{pmatrix}
        x_1,\, \dots,\,  x_n,\, y_1,\, \dots,\, y_n, \, w_1, \, \dots, \, w_n
    \end{pmatrix}^T$. Then we have
    \[L\mu = \frac{m}{n}\begin{pmatrix}
        1\\ \vdots \\ 1
    \end{pmatrix}.\]
    By Lemma \ref{lem: soln to M eqn}, it follows that 
    \[\mu = \frac{m}{3n} \begin{pmatrix}
        1\\ \vdots \\ 1
    \end{pmatrix}.\]
    As $x_i, y_i, w_i \in\Z$, we see that $3n\mid m$ so that $3^{a+1}\mid m$ and we are done.
\end{proof}

Combining the previous Lemmas we get the following.

\begin{theorem} \label{thm: gen papp graph 3-bal}
    The generalized Pappus graph, $P(m,j,k)$, is 3-balanced if and only if $6\mid m$, $3\nmid j$, and $k=\frac{m}{2}$.
\end{theorem}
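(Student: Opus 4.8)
The plan is to assemble Theorem~\ref{thm: gen papp graph 3-bal} from the lemmas already established in this section; the theorem is precisely the conjunction of those results, so the proof is mostly a matter of organizing the biconditional and checking that the standing hypotheses on $P(m,j,k)$ supply what each lemma needs.

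First I would dispatch the easy direction. Assume $6\mid m$, $3\nmid j$, and $k=\frac{m}{2}$. Then the explicit coloring $\ell(v_i)=\ell(u_i)=\ell(w_i)=i$ from the second lemma of this section is 3-balanced, and this implication is done.

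For the converse, assume $P(m,j,k)$ is 3-balanced. The first task is to extract $k=\frac{m}{2}$: by Theorem~\ref{thm: 3|deg} every vertex of a 3-balanced graph has degree divisible by $3$, and by the observation following Definition~\ref{def: gen pappus graph} this forces $P(m,j,k)$ to be cubic, i.e.\ $k=\frac{m}{2}$; since $k\in\N$ this in turn forces $m$ to be even. With $m$ even and $k=\frac{m}{2}$ in hand, the first lemma of this section (whose hypotheses are exactly ``$m$ even, $m\geq4$, $1\leq j<\frac{m}{2}$, and $P(m,j,\frac{m}{2})$ admits a balanced 3-coloring'') yields $3\mid m$, hence $6\mid m$. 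It remains to see $3\nmid j$, and this is the contrapositive of the last lemma of this section: that lemma says $6\mid m$ together with $3\mid j$ precludes a 3-balanced coloring, so from $6\mid m$ and the existence of such a coloring we conclude $3\nmid j$. This completes the forward direction and hence the theorem.

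There is essentially no obstacle at the level of the theorem itself: all of the genuine difficulty has been absorbed into the nonsingularity of the matrix $L$ (equivalently, of $BAB+C$), which rested on the Mann-type classification of the sets of $210$th roots of unity satisfying Equation~\ref{eq: Pappus zeta}, followed by the descent argument showing that $3^a\mid m$ implies $3^{a+1}\mid m$. The only care needed in the present step is to draw out the degree and parity constraints on $m$ and $k$ \emph{before} invoking the two divisibility lemmas, and to note that the ambient conditions $m\geq4$ and $1\leq j<\frac{m}{2}$ required by those lemmas are already part of the definition of $P(m,j,k)$.
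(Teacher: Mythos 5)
Your proposal is correct and matches the paper's proof, which consists precisely of the remark "Combining the previous Lemmas we get the following": the degree observation after Definition \ref{def: gen pappus graph} forces $k=\frac{m}{2}$ and $m$ even, the first lemma gives $3\mid m$, the explicit coloring lemma gives sufficiency when $3\nmid j$, and the final descent lemma (contrapositively) rules out $3\mid j$. You have simply spelled out the bookkeeping that the paper leaves implicit.
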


\section{Constructions of 3-balanced Graphs} \label{sec: constructions}

In this section, we give methods of constructing 3-balanced graphs and study the preservation of the 3-balanced property under certain graph products. 

We begin with the construction of an infinite family of cubic 3-balanced graphs.
Let $n\in 2\Z$ with $n\geq 4$.
The \emph{M{\"o}bius ladder}, $M_n$, is constructed from an $n$-cycle by adding edges connecting opposite pairs of vertices.
By Theorem \ref{thm: regular vertex number}, a minimum requirement for $M_n$ to be 3-balanced is $6\mid n$.
When $6\mid n$, write $v_1, \dots, v_{6n}$ for the vertices and define a vertex coloring by $\ell(v_i)=i \mod 3$. It is straightforward to see that this labeling is $3$-balanced and we get the following.

\begin{theorem} \label{thm: mobius}
    Let $n\in \Z^+$ with $n\geq 4$.
    The M{\"o}bius Ladder, $M_{n}$, is cubic 3-balanced if and only if $6 \mid n$.
\end{theorem}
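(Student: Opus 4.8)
The plan is to prove the two implications separately: the forward direction is immediate from the order constraints already established, while the reverse direction is a short explicit construction.

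For the forward direction, suppose $M_n$ is cubic and carries a $3$-balanced coloring. Then $M_n$ is a $3$-regular graph admitting a neighborhood balanced $3$-coloring, so Theorem \ref{thm: regular vertex number} applied with $r=3$ gives $6 \mid |V(M_n)|$; since $M_n$ has $n$ vertices, $6 \mid n$. (There is nothing to check for odd $n$: a $3$-balanced graph is $3$-regular by Theorem \ref{thm: 3|deg}, and the Möbius ladder construction on an $n$-cycle produces a $3$-regular graph only when $n$ is even.)

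For the reverse direction, assume $6 \mid n$, so in particular $n \geq 6$. One checks directly that the rungs $v_i v_{i+n/2}$ are pairwise distinct and distinct from the cycle edges, so $M_n$ is a simple cubic graph. Index the defining $n$-cycle by $\Z_n$, with edges $v_i v_{i+1}$ and $v_i v_{i+n/2}$, and define $\ell(v_i) = i \bmod 3 \in \Z_3$; this is well defined on $\Z_n$ because $3 \mid n$. The open neighborhood of $v_i$ is $\{v_{i-1}, v_{i+1}, v_{i+n/2}\}$. Since $i-1, i, i+1$ are pairwise incongruent modulo $3$, the vertices $v_{i-1}$ and $v_{i+1}$ carry the two colors other than $\ell(v_i)$, while $3 \mid \tfrac{n}{2}$ forces $\ell(v_{i+n/2}) = (i + \tfrac{n}{2}) \bmod 3 = i \bmod 3 = \ell(v_i)$. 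Hence every open neighborhood contains exactly one vertex of each color, so $\ell$ is $3$-balanced.

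The argument is entirely routine; the only points that require mild care are that the coloring must close up consistently around the cycle — which is exactly where $3 \mid n$ is used — and that it is the rung endpoint $v_{i+n/2}$, not a cycle-edge endpoint, whose color must match $\ell(v_i)$, which is where $3 \mid \tfrac{n}{2}$ (the content of $6 \mid n$ beyond $2 \mid n$ and $3 \mid n$) enters. I do not anticipate any genuine obstacle.
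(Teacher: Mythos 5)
Your proposal is correct and follows essentially the same route as the paper: the necessity of $6 \mid n$ comes from Theorem \ref{thm: regular vertex number}, and sufficiency uses the same coloring $\ell(v_i) = i \bmod 3$, with the rung endpoint $v_{i+n/2}$ receiving color $\ell(v_i)$ because $3 \mid \tfrac{n}{2}$. Your write-up simply makes explicit the neighborhood check that the paper leaves as "straightforward."
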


We now turn to the trivial observation that the 3-balanced property is preserved under edge disjoint unions if the graphs share a uniform 3-balanced coloring.

\begin{lemma} \label{lem: sum of 3-balanced}
    Let $k\in\Z^+$ and $G_i=(V, E_i)$, $1\leq i \leq k$, with $E_i \cap E_j = \emptyset$, and $\ell:V\to \Z_3$ a coloring. 
    If $\ell$ is 3-balanced for each $G_i$, then $G=(V,\, \bigcup_{i=1}^k E_i)$ is also 3-balanced.
\end{lemma}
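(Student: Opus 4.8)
The plan is to unwind the definitions and observe that the 3-balanced condition is, for each fixed vertex $v$, a statement purely about the multiset of colors appearing on the edges incident to $v$ — and that this multiset behaves additively across an edge-disjoint union. More precisely, fix $v\in V$ and, for each color $c\in\Z_3$, let $d_c^{G_i}(v) = |N_{G_i}(v)\cap V_c|$ denote the number of $\ell$-neighbors of $v$ in $G_i$ colored $c$. Since the vertex set and coloring $\ell$ are common to all the $G_i$ and the edge sets are pairwise disjoint, the neighbors of $v$ in $G=(V,\bigcup_i E_i)$ are exactly the disjoint union of the neighbors of $v$ in each $G_i$; hence $d_c^{G}(v) = \sum_{i=1}^k d_c^{G_i}(v)$ for every color $c$.

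The second step is to apply the hypothesis. Because $\ell$ is 3-balanced for each $G_i$, Definition \ref{def: 3-balanced} gives $d_c^{G_i}(v) = d_{c'}^{G_i}(v)$ for all $c,c'\in\Z_3$ and all $i$. Summing this equality over $i$ from $1$ to $k$ and using the additivity from the first step yields $d_c^{G}(v) = d_{c'}^{G}(v)$ for all $c,c'\in\Z_3$. Since $v$ was arbitrary, $\ell$ is a 3-balanced coloring of $G$, which is what we wanted.

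The only point requiring a word of care — and the closest thing to an ``obstacle,'' though it is quite mild — is the claim that $N_G(v)$ is the disjoint union of the $N_{G_i}(v)$. This uses that the $E_i$ are pairwise edge-disjoint: if $w\in N_{G_i}(v)\cap N_{G_j}(v)$ for $i\neq j$ then the edge $vw$ lies in both $E_i$ and $E_j$, contradicting $E_i\cap E_j=\emptyset$. One should also note in passing that $G$ is again a simple graph for the same reason (no edge is repeated), so the statement ``$G$ is 3-balanced'' is well-posed. No divisibility or counting arguments (Theorems \ref{thm: 3|deg}, \ref{thm: edge number}) are needed here; the result is genuinely just the observation that the defining condition is a vertex-local linear condition on edge-color counts and is therefore stable under edge-disjoint superposition.
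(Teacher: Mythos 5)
Your proof is correct: neighborhoods in the edge-disjoint union decompose as disjoint unions (edge-disjointness preventing any repeated neighbor contribution), so the per-color counts add and the balanced condition is preserved. The paper states this lemma as a trivial observation and omits any proof, and your argument is exactly the intended one.
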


Next, we observe that the property of being 3-balanced is preserved under gluing at a single vertex.

\begin{theorem}
    Let $k\in\Z^+$ and $G_i=(V_i, E_i)$, $1\leq i \leq k$, be 3-balanced
    with $V_i \cap V_j = \{v_0\}$ for $1\leq i < j \leq k$.
    Then $G=(\bigcup_{i=1}^k V_i, \, \bigcup_{i=1}^k E_i)$ is 3-balanced.
\end{theorem}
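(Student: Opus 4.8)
The plan is to exploit the fact that in the glued graph $G$, every vertex other than $v_0$ keeps exactly the same open neighborhood it had in its original $G_i$, so its balance condition is automatic; only the vertex $v_0$ needs to be checked, and its neighborhood in $G$ is the disjoint union of its neighborhoods in each $G_i$. First I would set up notation: for each $i$, let $\ell_i \colon V_i \to \Z_3$ be a 3-balanced coloring of $G_i$, and observe that by Theorem \ref{thm: easy modification of a 3-bal coloring} we may post-compose each $\ell_i$ with an affine map $\epsilon_i \ell_i + c_i$ (with $\epsilon_i \in \{\pm 1\}$, $c_i \in \Z_3$) and still have a 3-balanced coloring of $G_i$. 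The freedom here is exactly what is needed to make the colorings agree at $v_0$; since $\Z_3$ acts transitively on itself by translation, for each $i$ we can choose $c_i$ (taking $\epsilon_i = 1$, say) so that $\ell_i(v_0)$ equals a common prescribed value. After this normalization the colorings $\ell_i$ agree on the shared vertex $v_0$, hence they glue to a well-defined coloring $\ell \colon \bigcup V_i \to \Z_3$.

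Next I would verify the balance condition vertex by vertex. For $v \neq v_0$, there is a unique $i$ with $v \in V_i \setminus \{v_0\}$; since the $G_i$ meet only in $v_0$ and the edge sets are (necessarily) disjoint, $N_G(v) = N_{G_i}(v)$ and $\ell$ restricted to $N_{G_i}(v)$ equals $\ell_i$ restricted to it, so $|N_G(v) \cap V_a| = |N_{G_i}(v) \cap (V_i)_a|$ is independent of $a \in \Z_3$ because $\ell_i$ is 3-balanced. For $v_0$ itself, $N_G(v_0) = \bigsqcup_{i=1}^k N_{G_i}(v_0)$, a disjoint union, so
\[
|N_G(v_0) \cap V_a| = \sum_{i=1}^k |N_{G_i}(v_0) \cap (V_i)_a|,
\]
and each summand is independent of $a$ since $\ell_i$ is 3-balanced; hence the total is too. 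This shows $\ell$ is a 3-balanced coloring of $G$.

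The only subtle point — and the "main obstacle," though it is mild — is the normalization step: one must be sure that the affine modifications allowed by Theorem \ref{thm: easy modification of a 3-bal coloring} suffice to align the values at $v_0$ across all $k$ graphs simultaneously. This is immediate because translations alone already act transitively on $\Z_3$, so no interaction between the choices for different $i$ is required; each $G_i$ is adjusted independently. One should also note explicitly that since the $V_i$ pairwise intersect only in $\{v_0\}$ and each $G_i$ is a simple graph, the edge sets $E_i$ are automatically pairwise disjoint, so no edge of $G$ is "shared," making the neighborhood decomposition at $v_0$ genuinely a disjoint union. With these remarks the argument is complete.
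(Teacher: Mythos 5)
Your proposal is correct and follows essentially the same route as the paper: normalize each $\ell_i$ via Theorem \ref{thm: easy modification of a 3-bal coloring} so that all colorings agree at $v_0$, then glue and check the balance condition, with the only nontrivial vertex being $v_0$, whose neighborhood is the disjoint union of its neighborhoods in the $G_i$. Your write-up simply spells out the verification that the paper leaves as ``straightforward.''
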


\begin{proof}
    Let $\ell_i$ be a 3-balanced labeling of $V_i$ for $G_i$. By Theorem \ref{thm: easy modification of a 3-bal coloring}, we may assume that $\ell_i(v_0) =0$. It is then straightforward that coloring of $\bigcup_{i=1}^k V_i$ with $\ell_i$ on $G$ is well defined and 3-balanced.
\end{proof}

The following definition is a generalization of join.
\begin{definition} \label{def: join of graphs along a graph}
    Let $G = (V,E)$, and let $\mathcal{G}=\{G_v \mid v\in V\}$ be a collection of graphs indexed by $V$. We define the \emph{join of $\mathcal{G}$ along $G$}, $\nabla\mathcal{G}$, to be the graph constructed as follows. Begin with $\bigcup_{v\in V} G_v$. For every $vw\in E$, include all possible edges between $G_v$ and $G_w$.
    Note that $G_v \nabla G_w$, the join of $G_v$ and $G_w$ for disjoint $v,w\in V$, is a subgraph of $\nabla\mathcal{G}$.
\end{definition}

Next we show that the join along a graph of copies of the complement of complete graphs, $\overline{K_{3n}}$, produces 3-balanced graphs. 

\begin{theorem}
    Let $G=(V,E)$, $n_v \in \Z^+$ for $v\in V$, 
    $G_v = \overline{K_{3n_v}}$, and $\mathcal{G}=\{G_v \mid v\in V\}$. Then $\nabla\mathcal{G}$ is 3-balanced.
\end{theorem}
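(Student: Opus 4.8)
The plan is to produce an explicit 3-balanced coloring of $\nabla\mathcal{G}$ directly. The key observation is that each $G_v = \overline{K_{3n_v}}$ has no internal edges, so for a vertex $x$ lying in the copy $G_v$, its open neighborhood $N(x)$ in $\nabla\mathcal{G}$ is exactly $\bigcup_{w : vw\in E} V(G_w)$ — the union of the vertex sets of all the copies indexed by neighbors of $v$ in $G$. Since $|V(G_w)| = 3n_w$ is divisible by $3$, it suffices to color each $\overline{K_{3n_w}}$ so that exactly $n_w$ of its vertices receive each color in $\Z_3$. Then for any $x\in G_v$ and any $i\in\Z_3$, we get $|N(x)\cap V_i| = \sum_{w : vw\in E} n_w$, which is independent of $i$; hence the coloring is 3-balanced.

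Concretely, first I would fix, for each $v\in V$, an arbitrary partition of the $3n_v$ vertices of $G_v$ into three blocks of size $n_v$ and assign colors $0,1,2$ to the three blocks. Call the resulting global coloring $\ell:\bigcup_{v\in V}V(G_v)\to\Z_3$. Second, I would verify the neighborhood description: by Definition \ref{def: join of graphs along a graph}, the only edges incident to $x\in G_v$ are those added between $G_v$ and $G_w$ for $vw\in E$, and all such edges are present, so $N(x) = \bigcup_{w:vw\in E}V(G_w)$ exactly (note $G_v$ contributes nothing since $\overline{K_{3n_v}}$ is edgeless). Third, I would compute $|N(x)\cap V_i| = \sum_{w:vw\in E}|V(G_w)\cap V_i| = \sum_{w:vw\in E} n_w$ for every $i\in\Z_3$, which establishes Definition \ref{def: 3-balanced}. (The edge case where $G$ has an isolated vertex $v$ is fine: then every $x\in G_v$ has empty neighborhood and the balance condition holds vacuously.)

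There is essentially no obstacle here — the statement is a soft structural observation and the argument is a one-paragraph verification. The only thing to be slightly careful about is the bookkeeping that $N(x)$ contains no vertices of $G_v$ itself (which would spoil the count if $\overline{K_{3n_v}}$ were replaced by a graph with edges) and that the sum $\sum_{w:vw\in E}n_w$ genuinely does not depend on the color index $i$, which is immediate from the equidistribution of colors within each $G_w$. So the write-up is just: define the coloring, identify the neighborhoods, count.
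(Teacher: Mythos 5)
Your proposal is correct and matches the paper's proof: the paper likewise colors each $G_v=\overline{K_{3n_v}}$ with one third of its vertices in each color and notes that this yields a 3-balanced coloring of $\nabla\mathcal{G}$, with your write-up simply spelling out the neighborhood identification and counting that the paper leaves as ``straightforward.''
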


\begin{proof}
    Choose any coloring of $G_v$, $\ell_v$, that
    colors one third of the vertices of $G_v$ with each color. 
    It is straightforward to see this produces a 3-balanced coloring of 
    $\nabla\mathcal{G}$
\end{proof}

In general, the join of two 3-balanced graphs is not 3-balanced.
For example, let $G$ be the 3-balanced graph from Figure \ref{fig: Non-Regular 3-balanced graph} with $11$ vertices and vertex degrees of 3 and 6. Then $G\,\nabla G$ is not 3-balanced by Theorem \ref{thm: 3|deg} as the degree of each vertex is 14 or 17.
However, if we restrict our focus to regular graphs, we will see below that the join preserves the 3-balanced property.

\begin{theorem}
    Let $i\in\{1,2\}$, $k_i\in\Z^+$, and   
    $G_i$ be $3k_i$-regular graphs.
    Suppose $G_i$ are 3-balanced. Then $G_1 \nabla G_2$ is 3-balanced.
\end{theorem}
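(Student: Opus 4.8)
The plan is to build a 3-balanced coloring of $G_1 \nabla G_2$ directly from 3-balanced colorings $\ell_1$ of $G_1$ and $\ell_2$ of $G_2$. Let $n_i = |V(G_i)|$. Since $G_i$ is $3k_i$-regular and 3-balanced, Theorem \ref{thm: regular vertex number} gives $|V(G_i)_0| = |V(G_i)_1| = |V(G_i)_2| = n_i/3$; in particular $3 \mid n_i$. The key observation is that in the join, a vertex $v \in V(G_1)$ sees, in addition to its $3k_1$ neighbors inside $G_1$, \emph{all} $n_2$ vertices of $G_2$; so the contribution of $G_2$ to $|N(v)\cap V_c|$ is exactly $|V(G_2)_c| = n_2/3$, independent of $c$. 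The same holds symmetrically for $v \in V(G_2)$. Hence the coloring of $G_1 \nabla G_2$ obtained by using $\ell_1$ on $V(G_1)$ and $\ell_2$ on $V(G_2)$ is automatically 3-balanced: for $v \in V(G_1)$ we have $|N(v)\cap V_c| = |N_{G_1}(v) \cap V(G_1)_c| + n_2/3$, and the first term is the same for all $c$ because $\ell_1$ is 3-balanced on $G_1$, so the whole expression is independent of $c$; likewise for $v\in V(G_2)$.

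The steps, in order, are: (1) invoke Theorem \ref{thm: regular vertex number} on each $G_i$ to record that each color class has size $n_i/3$; (2) define the coloring $\ell$ on $V(G_1 \nabla G_2) = V(G_1) \sqcup V(G_2)$ by $\ell|_{V(G_i)} = \ell_i$; (3) for an arbitrary vertex, say $v\in V(G_1)$, decompose its open neighborhood in $G_1\nabla G_2$ as $N_{G_1}(v) \sqcup V(G_2)$ and count color-$c$ neighbors as $|N_{G_1}(v)\cap V(G_1)_c| + |V(G_2)_c|$; (4) observe both summands are independent of $c$ — the first by 3-balancedness of $\ell_1$, the second by step (1) — and conclude; (5) repeat verbatim with the roles of $1$ and $2$ swapped.

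I do not expect a genuine obstacle here; the regularity hypothesis is exactly what makes the ``extra'' neighbors from the other factor split evenly across the three colors, and without it (as the preceding example with the $11$-vertex graph shows) the statement fails already at the level of Theorem \ref{thm: 3|deg}. The only things to be slightly careful about are that the join is taken between disjoint copies (so no vertex identifications occur and the neighborhood decomposition $N(v) = N_{G_i}(v) \sqcup V(G_{3-i})$ is a genuine disjoint union), and that one should not even need Theorem \ref{thm: easy modification of a 3-bal coloring} to adjust the colorings — any 3-balanced $\ell_1,\ell_2$ work as given. One could also phrase the argument via Lemma \ref{lem: sum of 3-balanced}, writing $G_1\nabla G_2$ as an edge-disjoint union of $G_1$, $G_2$, and the complete bipartite graph between them, but checking that the common coloring is 3-balanced on the bipartite part is precisely the $n_i/3$ count above, so the direct argument is cleanest.
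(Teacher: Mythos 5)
Your proposal is correct and follows essentially the same route as the paper: take the given 3-balanced colorings $\ell_1,\ell_2$, invoke Theorem \ref{thm: regular vertex number} for equidistribution of color classes in each regular factor, and note that each vertex's extra neighbors from the other factor split evenly among the three colors. You simply spell out the neighborhood decomposition $N(v)=N_{G_i}(v)\sqcup V(G_{3-i})$ that the paper leaves implicit.
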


\begin{proof}
    Suppose $G_i$ is 3-balanced. 
    Let $\ell_i$ be a 3-balanced coloring of $G_i$.
    Due to Theorem \ref{thm: regular vertex number}, 
    the vertex colors of $G_i$ are equidistributed. It follows that $\ell_1$ and $\ell_2$ induce a 3-balanced coloring $G_1\nabla G_2$.
\end{proof}

Now we move on to graph products and show that the product of 3-balanced graphs is 3-balanced, with some stronger results for the lexicographic product. We write $\Box$ for the \emph{Cartesian product}, $\times$ for the \emph{tensor product}, $\boxtimes$ for the \emph{strong product}, and $\cdot$ for the \emph{lexicographic product}.

\begin{theorem}
    If $G_1$, $G_2$ are 3-balanced, then so are $G_1\Box G_2$, $G_1\times G_2$, and $G_1\boxtimes G_2$.
\end{theorem}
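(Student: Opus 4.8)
The plan is to equip each product with the diagonal coloring $\ell(v_1,v_2) \equiv \ell_1(v_1) + \ell_2(v_2) \pmod 3$, where $\ell_i$ is a fixed $3$-balanced coloring of $G_i$, and to verify directly that every open neighborhood is color-balanced. The engine is an elementary observation about multisets of elements of $\Z_3$: call such a multiset $S$ \emph{balanced} if every residue occurs with the same multiplicity in $S$. Then (i) for any $c\in\Z_3$ the translate $c+S$ is balanced whenever $S$ is; more generally, for any multiset $T$, the sumset with multiplicity $T+S:=\{\,t+s \mid t\in T,\ s\in S\,\}$, which is the disjoint union $\bigsqcup_{t\in T}(t+S)$, is balanced whenever $S$ is balanced; and (ii) a disjoint union of balanced multisets is balanced. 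I will apply this to the multiset of $\ell$-values carried by the neighbors of a given vertex.

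Second, I would record the neighborhood decompositions. For $x=(v_1,v_2)$ one has, from the definitions of the three products, $N_{\Box}(x)=(\{v_1\}\times N(v_2))\ \sqcup\ (N(v_1)\times\{v_2\})$, $N_{\times}(x)=N(v_1)\times N(v_2)$, and $N_{\boxtimes}(x)=(\{v_1\}\times N(v_2))\ \sqcup\ (N(v_1)\times\{v_2\})\ \sqcup\ (N(v_1)\times N(v_2))$; in particular the strong-product neighborhood is the disjoint union of the Cartesian- and tensor-product neighborhoods. These unions are genuinely disjoint, so (ii) applies once each block is understood.

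Third, I would treat the three block types. On $\{v_1\}\times N(v_2)$ the $\ell$-values form $\ell_1(v_1)+\{\ell_2(w_2)\mid w_2\in N(v_2)\}$, a translate of a balanced multiset (balanced because $\ell_2$ is $3$-balanced at $v_2$), hence balanced by (i); symmetrically, $N(v_1)\times\{v_2\}$ contributes a balanced multiset because $\ell_1$ is $3$-balanced at $v_1$. On $N(v_1)\times N(v_2)$ the $\ell$-values form $\{\ell_1(w_1)\mid w_1\in N(v_1)\}+\{\ell_2(w_2)\mid w_2\in N(v_2)\}$, a sumset in which at least one summand is balanced, hence balanced by (i). Combining with (ii) across the block decomposition of $N(x)$ shows that the $\ell$-values on $N(x)$ are balanced for every vertex $x$ of $G_1\Box G_2$, $G_1\times G_2$, and $G_1\boxtimes G_2$, which is exactly the $3$-balanced condition.

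I do not anticipate a real obstacle: the only points needing care are (a) reading the union appearing in each neighborhood as an honest disjoint union of blocks dictated by the product's adjacency rule, and (b) keeping track of multiplicities when forming $T+S$, so that balancedness really is preserved — a fact that ultimately rests on a single balanced summand absorbing an arbitrary partner. This is also why, for the tensor product, one never uses any global property of the color distributions, only that each $N(v_i)$ is balanced.
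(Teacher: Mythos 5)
Your proposal is correct and follows essentially the same route as the paper: the same diagonal coloring $\ell(v_1,v_2)=\ell_1(v_1)+\ell_2(v_2)$, the same neighborhood decompositions for the Cartesian and tensor products, and the observation that a translate (or sumset with a balanced factor) of a balanced multiset of $\Z_3$-values remains balanced. The only cosmetic difference is that for the strong product you verify balance directly from the disjoint block decomposition of the neighborhood, while the paper cites its edge-disjoint-union lemma applied to $E(G_1\Box G_2)$ and $E(G_1\times G_2)$ with the common coloring — the same argument in different packaging.
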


\begin{proof}
    Let $\ell_i$ be 3-balanced labeling for $G_i$, $i=1,2$. 
    Define a labeling, $\ell$, on $V_{G_1}\times V_{G_2}$ by $\ell((u,v)) = \ell_1(u)+\ell_2(v)$.
    Begin with $G = G_1\Box G_2$. Then 
    \[N_G((u, v))=\{(u, w) \mid w \in N_{G_2}(v) \} \cup \{ (w,v) \mid w \in N_{G_1}(u) \}.\] 
    By construction, the colors are equidistributed on each set.
    For $H = G_1 \times G_2$, 
    \[N_H((u,v))=\{ (w,x) \mid w\in N_{G_1}(u), x \in N_{G_2}(v) \}.\] 
    Writing this as $\cup_{x \in N_{G_2}(v)} \{ (w,x) \mid w\in N_{G_1}(u) \}$, each of these subsets have equidistributed colors by construction.
    Lastly, $G_1 \boxtimes G_2$ is 3-balanced by Lemma \ref{lem: sum of 3-balanced} as $E(G_1\times G_2)\cap E(G_1\Box G_2)=\emptyset$.
\end{proof}

\begin{theorem}
    Let $r_1, r_2\in\Z^+$, $G_1$ a $3r_1$-regular graph, and $G_2$ a $2r_2$-regular graph.
    Suppose $G_1$ is 3-balanced and $G_2$ is 2-balanced.
    Then $G_1 \times G_2$ is 3-balanced.
\end{theorem}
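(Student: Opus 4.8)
The plan is to exhibit an explicit 3-balanced coloring of $G_1 \times G_2$ assembled from a 3-balanced coloring $\ell_1 : V_{G_1} \to \Z_3$ of $G_1$ and a 2-balanced coloring $\ell_2 : V_{G_2} \to \Z_2$ of $G_2$. Viewing the values of $\ell_2$ as lying in $\{0,1\} \subseteq \Z_3$, I would set $\ell((u,v)) = \ell_1(u) + \ell_2(v) \bmod 3$, in the spirit of the combined coloring used in the preceding theorem. (One could instead simply take $\ell((u,v)) = \ell_1(u)$; the computation below shows this already works whenever $G_2$ is regular, so the $2$-balanced hypothesis is used only to make the more symmetric coloring go through.)

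First I would recall that $N_{G_1\times G_2}((u,v)) = N_{G_1}(u)\times N_{G_2}(v)$ by the definition of the tensor product, and then record the two local equidistribution facts furnished by the hypotheses: since $G_1$ is $3r_1$-regular and $3$-balanced, each $u\in V_{G_1}$ has exactly $r_1$ neighbors of each color in $\Z_3$; and since $G_2$ is $2r_2$-regular and $2$-balanced, each $v\in V_{G_2}$ has exactly $r_2$ neighbors $v'$ with $\ell_2(v')=0$ and $r_2$ with $\ell_2(v')=1$.

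Next, fixing a vertex $(u,v)$ and a target color $i\in\Z_3$, I would count the neighbors $(u',v')\in N_{G_1}(u)\times N_{G_2}(v)$ with $\ell((u',v'))=i$ by splitting on the value of $\ell_2(v')$: among the $r_2$ neighbors $v'$ with $\ell_2(v')=0$ one needs $\ell_1(u')\equiv i$, contributing $r_1 r_2$; among the $r_2$ neighbors $v'$ with $\ell_2(v')=1$ one needs $\ell_1(u')\equiv i-1$, contributing another $r_1 r_2$. Hence $(u,v)$ has $2r_1 r_2$ neighbors of color $i$ regardless of $i$, so $\ell$ is $3$-balanced; this is consistent with $G_1\times G_2$ being $6r_1 r_2$-regular, as required by Theorem \ref{thm: 3|deg}.

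There is no genuine obstacle here; the proof is a short bookkeeping argument. The only point requiring care is tracking which residue class of $\ell_1(u')$ is needed for each value of $\ell_2(v')$ and observing that the two resulting contributions are equal precisely because of the equidistribution recorded in the second step.
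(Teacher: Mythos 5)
Your proof is correct and takes essentially the same route as the paper: both build a coloring of $G_1\times G_2$ by twisting $\ell_1$ with $\ell_2$ (the paper multiplies by a $\pm1$-valued $2$-balanced coloring, you add a $\{0,1\}$-valued one mod $3$) and verify balance via the decomposition $N((u,v))=N_{G_1}(u)\times N_{G_2}(v)$ into slices over $N_{G_2}(v)$, each of which is equidistributed because $\ell_1$ is. Your parenthetical observation that $\ell((u,v))=\ell_1(u)$ already works is also accurate and, as in the paper's own argument, shows the $2$-balanced hypothesis on $G_2$ is not actually needed.
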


\begin{proof}
    Let $\ell_1$ be a 3-balanced labeling of $G_1$ and $\ell_2$ a 2-balanced labeling of $G_2$ colored with $\pm 1$. 
    Define $\ell$ on $V_{G_1}\times V_{G_2}$ by $\ell(u,v)= \ell_1(u) \cdot \ell_2(v)$.
    As $N((u,v))=\cup_{x \in N_{G_2}(v)} \{ (w,x) \mid w\in N_{G_1}(u) \}$, each of these subsets have equidistributed colors by construction.     
\end{proof}

Interestingly, the lexicographic product only requires the second graph to be 3-balanced.


\begin{theorem}
    Let $r\in\Z^+$, $G_1$ a graph, and $G_2$ a $3r$-regular graph.
    Suppose $G_2$ is 3-balanced. Then $G_1\cdot G_2$ is 3-balanced.
\end{theorem}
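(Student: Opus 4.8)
The plan is to write down the neighborhood structure of the lexicographic product and then lift a $3$-balanced coloring of $G_2$ in the obvious way, ignoring $G_1$ entirely. Recall that in $G_1\cdot G_2$ the vertex $(u,v)$ is adjacent to $(u',v')$ precisely when either $u=u'$ and $v\sim v'$ in $G_2$, or $u\sim u'$ in $G_1$ (with no condition on $v,v'$). Consequently
\[ N_{G_1\cdot G_2}\big((u,v)\big)
   = \big\{(u,w)\mid w\in N_{G_2}(v)\big\}
     \ \cup\ \bigcup_{u'\in N_{G_1}(u)} \big\{(u',w)\mid w\in V_{G_2}\big\},\]
and this union is disjoint.

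Let $\ell_2$ be a $3$-balanced coloring of $G_2$ and define $\ell$ on $V_{G_1}\times V_{G_2}$ by $\ell(u,v)=\ell_2(v)$. I would then check $3$-balancedness of $\ell$ by counting, for a fixed color $i\in\Z_3$, the number of elements of $N_{G_1\cdot G_2}((u,v))$ of color $i$ using the decomposition above. The first piece $\{(u,w)\mid w\in N_{G_2}(v)\}$ contributes exactly $|N_{G_2}(v)\cap \ell_2^{-1}(i)|$, which equals $\tfrac{1}{3}d_{G_2}(v)=r$ by the hypothesis that $\ell_2$ is $3$-balanced and $G_2$ is $3r$-regular; crucially this count does not depend on $i$.

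For the second piece, each set $\{(u',w)\mid w\in V_{G_2}\}$ with $u'\in N_{G_1}(u)$ contributes $|V_{G_2}\cap \ell_2^{-1}(i)| = |V_i|$, and here I would invoke Theorem \ref{thm: regular vertex number}: since $G_2$ is regular and $3$-balanced, $|V_i|=|V_{G_2}|/3$ independently of $i$. Summing over the $d_{G_1}(u)$ choices of $u'$ gives $d_{G_1}(u)\,|V_{G_2}|/3$. Hence the total number of color-$i$ neighbors of $(u,v)$ is $r + d_{G_1}(u)\,|V_{G_2}|/3$, which is the same for all $i\in\Z_3$, so $\ell$ is $3$-balanced. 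There is no real obstacle here; the only point worth emphasizing is why $G_1$ need not be $3$-balanced, namely that across any edge of $G_1$ the \emph{entire} copy of $G_2$ appears, so global equidistribution of the colors of $G_2$ — guaranteed by regularity via Theorem \ref{thm: regular vertex number} — is all that is needed, and this is exactly the place where the $3r$-regularity hypothesis on $G_2$ is used.
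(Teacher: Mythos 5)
Your proposal is correct and follows essentially the same route as the paper: the same lifted coloring $\ell(u,v)=\ell_2(v)$, the same disjoint decomposition of $N_{G_1\cdot G_2}((u,v))$ into the fiber part and the full copies of $V_{G_2}$ over $N_{G_1}(u)$, and the same appeal to Theorem \ref{thm: regular vertex number} for equidistribution of colors on $V_{G_2}$. Your explicit per-color count $r + d_{G_1}(u)\,|V_{G_2}|/3$ just makes the paper's ``equidistributed by construction'' argument quantitative.
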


\begin{proof}
    Let $\ell_2$ be a 3-balanced coloring of $G_2$. Define $\ell$ on $V_{G_1}\times V_{G_2}$ by 
    $\ell((u,v))= \ell_2(v)$. Now notice that
    \[N((u,v)) =\{ (u,w) \mid w\in N_{G_2}(v) \} \cup \{ (w,x) \mid w\in N_{G_1}(u), x\in V_{G_2} \}. \]
    The first set has equidistributed colors by construction. Writing the second set as 
    $\cup_{w\in N_{G_1}(u)} \{ (w,x) \mid x\in V_{G_2} \}$, we see that the second set similarly has equidistributed colors by Theorem \ref{thm: regular vertex number}.
\end{proof}

\section{Characterizations of 3-Balanced Cubic Graphs} \label{sec: cubic}

In this section, every graph $G$ is assumed to be cubic. Recall that a \emph{Tait coloring} of a cubic graph is a proper $Z_3$ edge coloring. A \emph{snark} is a cubic graph with edge chromatic number 4, the only other option by Vizing's theorem. Some authors also add additional connectivity and cycle length requirements to the definition of a snark.

\begin{definition} \label{def: induced edge coloring}
    Let $\ell$ be a coloring of the vertices of $G$. Define the \emph{induced edge coloring}, also denoted by $\ell$, as
    \[ \ell(uv) = \ell(u) + \ell(v) \]
    for $uv\in E$.
\end{definition}

\begin{theorem} \label{thm: perfmatch}
    Let $G$ be cubic and 3-balanced. Then the induced edge coloring is a Tait coloring of $G$ that gives rise to 3 edge-disjoint perfect matchings and, for any choice of two colors, a vertex covering by cycles that alternate between those two colors.
\end{theorem}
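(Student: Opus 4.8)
The plan is to verify three things in turn: that the induced edge coloring is proper (a Tait coloring), that each color class is a perfect matching, and that the union of any two color classes decomposes into alternating cycles covering every vertex.

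First I would show the induced coloring $\ell(uv) = \ell(u)+\ell(v)$ is proper. Fix a vertex $v$ with $\ell(v) = c$ and let its three neighbors be $w_1, w_2, w_3$. By Theorem \ref{thm: regular vertex number} applied locally — or rather directly from Definition \ref{def: 3-balanced} with $r=3$ — the neighborhood $N(v)$ contains exactly one vertex of each color, so $\{\ell(w_1),\ell(w_2),\ell(w_3)\} = \Z_3 = \{0,1,2\}$. Hence the three edges at $v$ receive colors $c+0,\, c+1,\, c+2$, which is a permutation of $\Z_3$; in particular they are pairwise distinct. This holds at every vertex, so $\ell$ is a proper $\Z_3$ edge coloring, i.e.\ a Tait coloring. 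The same computation shows that at every vertex, exactly one incident edge carries each color $i\in\Z_3$; therefore the set $F_i$ of edges colored $i$ meets every vertex exactly once, which is precisely the statement that $F_i$ is a perfect matching. Since the $F_i$ are the fibers of a function on $E$, they are automatically pairwise disjoint, giving three edge-disjoint perfect matchings.

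Next, for the alternating-cycle cover: fix two colors $i\neq j$ and consider the spanning subgraph $H$ with edge set $F_i \cup F_j$. At each vertex, exactly one edge of $F_i$ and exactly one edge of $F_j$ are incident, so every vertex of $H$ has degree exactly $2$. A graph in which every vertex has degree $2$ is a disjoint union of cycles, and since $H$ is spanning, these cycles cover $V(G)$. Finally, along any such cycle the edges must alternate between $F_i$ and $F_j$: if two consecutive edges of the cycle had the same color, say both in $F_i$, then their common endpoint would be incident to two edges of $F_i$, contradicting that $F_i$ is a perfect matching. So each cycle alternates between color $i$ and color $j$, and together they form the desired vertex covering by $\{i,j\}$-alternating cycles.

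I do not anticipate a serious obstacle here; the argument is essentially bookkeeping once the local observation ``each neighborhood of a cubic 3-balanced graph is rainbow-colored'' is in hand, and that is immediate from the definition. The one point requiring a word of care is the alternation claim — making explicit that a monochromatic pair of consecutive cycle-edges would violate the matching property — and noting that the cycles in a $2$-regular graph are what we want; everything else is a direct consequence of counting incidences at each vertex.
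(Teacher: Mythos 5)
Your argument is correct and follows essentially the same route as the paper's proof: observe that each neighborhood in a cubic 3-balanced graph is rainbow, so the induced edge coloring assigns three distinct colors at every vertex, making it a Tait coloring whose color classes are perfect matchings, and the union of any two classes is a 2-regular spanning subgraph whose cycles alternate. The paper states this more tersely; your write-up just fills in the same bookkeeping.
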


\begin{proof}
    Let $\ell$ be a 3-balanced coloring and extend this coloring to the edges via the induced edge coloring.
    By construction, the three edges adjacent to any vertex of $G$ are colored with three distinct colors.
    As a result, all edges colored with the same color give rise to a perfect matching. Moreover, as $G$ is finite and cubic, any choice of two colors result in an exhaustion of the vertices of the graph by cycles that alternate between those two colors.
\end{proof}

Note that the converse to Theorem \ref{thm: perfmatch} is not true. For instance, every generalized Petersen graph, except the Petersen graph, has a Tait coloring (\cite{CastagnaPrins}, \cite{Watkins}), but 
Theorem \ref{thm: 3-bal col of gen Petersen} shows that most generalized Petersen graphs are not 3-balanced.




In particular, Theorem \ref{thm: perfmatch} shows that no snark is 3-balanced. From the covering with cycles part of Theorem \ref{thm: perfmatch}, it follows that 3-balanced graphs are bridgeless (see also \cite{Isaacs}, 2.4).
As edge connectivity is the same as vertex connectivity for cubic graphs, connected 3-balanced graphs are 2-connected. In turn, this means that every two vertices in a connected 3-balanced graph are contained in a cycle.

The next result strengthens Theorem \ref{thm: perfmatch} so as to allow a converse.


\begin{theorem} \label{thm: cubic sum characterization}
    Let $G$ be cubic.
    Then $G$ is 3-balanced if and only if $G$ has 3 disjoint perfect matchings labelled by elements of $\Z_3$ such that:
    \begin{itemize}
        \item For every cycle, $C = v_0 v_1  \ldots  v_{k-1}  v_0$, with corresponding edge labels $x_1,\dots, x_k$, the alternating sum 
                \[ S(C) = \sum_{i=1}^{k}(-1)^{i-1} x_i \]
            depends only on $v_0$.
    \end{itemize}
    In that case, $S(C) = 0$ if there is an even cycle passing through $v_0$.
\end{theorem}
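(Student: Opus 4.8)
The plan is to reduce both directions to the following elementary remark: for a cubic graph, a vertex coloring $\ell:V\to\Z_3$ is $3$-balanced if and only if its induced edge coloring (Definition \ref{def: induced edge coloring}) is a \emph{proper} $\Z_3$-edge coloring. Indeed, if the three edges at a vertex $v$ carry distinct labels $\ell(v)+\ell(w)$, then the three neighbors $w$ carry the three colors of $\Z_3$, and conversely; and by Theorem \ref{thm: perfmatch} a proper $\Z_3$-edge coloring of a cubic graph is the same thing as a partition of $E(G)$ into three perfect matchings labelled by $\Z_3$. So the statement becomes: a Tait coloring $x$ of $G$ is induced by some vertex coloring if and only if the alternating sum $S(C)$ depends only on the base vertex of $C$.

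For the forward direction I would take a $3$-balanced $\ell$, pass to the induced (proper, by Theorem \ref{thm: perfmatch}) edge coloring, and for a cycle $C=v_0v_1\cdots v_{k-1}v_0$ with $v_k=v_0$ and $x_i=\ell(v_{i-1})+\ell(v_i)$ simply telescope the alternating sum: all middle terms cancel and $S(C)=\bigl(1+(-1)^{k-1}\bigr)\ell(v_0)$, which equals $0$ for even $k$ and $2\ell(v_0)=-\ell(v_0)$ for odd $k$; in either case this depends only on $v_0$. The final assertion of the theorem is then immediate, since an even cycle through $v_0$, based at $v_0$, has alternating sum $0$.

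For the converse I would start from a Tait coloring $x$ whose alternating sums satisfy $S(C)=\sigma(v_0)$ for some $\sigma:V\to\Z_3$; here $\sigma$ is well defined because a cubic graph admitting a Tait coloring is bridgeless (the classical parity argument), hence $2$-connected, so that every vertex lies on a cycle. Two structural facts drive the construction. First, \emph{$\sigma$ vanishes on even cycles}: if $C$ is an even cycle at $v_0$ then the reversed cycle is also based at $v_0$ but has alternating sum $-S(C)$, so $\sigma(v_0)=-\sigma(v_0)$, forcing $\sigma(v_0)=0$ in $\Z_3$. Second, the same telescoping applied to an edge $ab$ lying on a cycle of length $k$ gives $\sigma(a)+\sigma(b)=\bigl(1+(-1)^{k-1}\bigr)x(ab)$, hence $x(ab)=-\sigma(a)-\sigma(b)$ as soon as $ab$ lies on an odd cycle. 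Now I would build a vertex coloring $\ell$ inducing $x$ in two cases. If $G$ is non-bipartite, every edge of $G$ lies on an odd cycle (a standard fact for $2$-connected non-bipartite graphs), so $\ell:=-\sigma$ works directly. If $G$ is bipartite, with parts $A,B$, then $\sigma\equiv0$ so $S(C)=0$ for every cycle; orienting every edge from $A$ to $B$, the circulation of $x$ around each cycle equals its alternating sum and hence vanishes, so (build it along a spanning tree and check the fundamental cycles) $x(uv)=\ell'(u)-\ell'(v)$ for some $\ell':V\to\Z_3$, whereupon $\ell:=\ell'$ on $A$ and $\ell:=-\ell'$ on $B$ induces $x$. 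In either case $\ell$ induces the proper coloring $x$, so by the opening remark $\ell$ is $3$-balanced.

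The heart of the argument — and the step I expect to need the most care — is this converse construction of $\ell$. The vanishing of $\sigma$ on even cycles is exactly what is needed to kill the obstruction around each individual cycle, but a single closed formula for $\ell$ does not cover both cases: in the bipartite case $\sigma\equiv0$ while $x\not\equiv0$, so the case split genuinely seems unavoidable. I would also cite the standard fact that every edge of a $2$-connected non-bipartite graph lies on an odd cycle (via an ear decomposition or a short Menger argument), together with the classical observation that a cubic graph with a Tait coloring has no bridge.
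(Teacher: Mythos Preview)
Your forward direction is identical to the paper's: both telescope $S(C)$ to $(1+(-1)^{k-1})\ell(v_0)$ and read off the last assertion from the even case.

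For the converse you take a genuinely different route. The paper works locally: it fixes a base vertex $v_0$, sets $\ell(v_0)=(-1)^{k}S(C)$ for one chosen cycle $C$ through $v_0$, propagates $\ell$ along $C$ via the recursion $\ell(v_j)=x_j-\ell(v_{j-1})$, and then spends most of its effort verifying that any second cycle $C'$ through $v_0$ assigns the \emph{same} value at each vertex common to $C$ and $C'$, by a parity case analysis on the indices of the intersection points. You instead package the hypothesis as a global function $\sigma:V\to\Z_3$, derive the key edge identity $\sigma(a)+\sigma(b)=2x(ab)$ along odd cycles by rebasing, and then split on bipartiteness: in the non-bipartite $2$-connected case you invoke the standard fact that every edge lies on an odd cycle, so $\ell:=-\sigma$ induces $x$ immediately; in the bipartite case $\sigma\equiv 0$ makes the hypothesis a vanishing-circulation condition, so $x$ is a $\Z_3$-coboundary via a spanning-tree potential. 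Your argument is shorter and more structural, at the price of importing two outside facts (odd cycles through every edge in $2$-connected non-bipartite graphs; the circulation/potential correspondence), whereas the paper's version is fully self-contained but pays for it with the intersection bookkeeping. One small point worth making explicit in your write-up: the bridgeless-hence-$2$-connected step and the bipartite/non-bipartite split should be applied per connected component.
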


\begin{proof}
    Suppose first that $G$ is 3-balanced. Use the induced edge coloring from Theorem \ref{thm: perfmatch} to color the edges so that there are 3 disjoint perfect matchings.
    Let $C = v_0 v_1  \cdots  v_{k-1}  v_0$ be a cycle in $G$ with corresponding edge labels $x_1, \dots, x_k$. When convenient, we will interpret the subscript on $v_i$ as an element of $\Z_k$.
    Then
    \begin{align*}
        \sum_{i=1}^{k}(-1)^{i-1} x_i
            &= \sum_{i=1}^{k}(-1)^{i-1} (\ell(v_{i-1})+\ell(v_i)) \\
            &= (1 + (-1)^{k-1}) \ell(v_0)
    \end{align*}
    and we are done.
    
    Conversely, suppose $\ell$ is a labeling of the edges of $G$ generating 3 disjoint perfect matchings that satisfy the property listed above. 
    For the same reasoning as found in the discussion before the proof, the Tait coloring here shows that every two distinct vertices of $G$ are contained in a cycle. 
    Note that we must have $S(C)=0$ if $k$ is even, by reversing the orientation of $C$. Similarly, if $k$ is odd but there exists an even cycle, $C_2$, containing $v_0$, then by hypothesis $S(C)=S(C_2)=0$.
    
    We will show that $\ell$ extends to a vertex labeling so that the edge labeling is the induced edge coloring. From this it will follow that $\ell$ is a 3-balanced vertex coloring.

    Fix $v_0 \in V$ and a cycle, $C = v_0 v_1  \cdots  v_{k-1}  v_0$, with corresponding edge labels $x_1, \dots, x_k$. 
    Define
    \begin{equation} \label{eqn: l(v_0) def}
        \ell(v_0) = (-1)^k \sum_{i=1}^{k}(-1)^{i-1} x_i.
    \end{equation}
    By hypothesis, $\ell(v_0)=0$ if there exists an even cycle through $v_0$.
    Inductively define $\ell(v_j)$, $1\leq j \leq k-1$, so that
    $\ell(v_{j-1}) + \ell(v_j) = x_j$.
    In particular,
    \begin{equation} \label{eqn: inductive def of l(v_j)}
        \ell(v_j) = (-1)^{j-1} \left( -\ell(v_0) + \sum_{i=1}^{j} (-1)^{i-1}                 x_i   \right)
    \end{equation}
    so that 
    \[  
        \ell(v_0) + \ell(v_{k-1}) =
        (1 + (-1)^{k+1}) \ell(v_0) 
        + (-1)^k \sum_{i=1}^{k-1}(-1)^{i-1} x_i. 
    \]
    When $k$ is even, $\ell(v_0) = 0$, so that $\sum_{i=1}^{k-1}(-1)^{i-1} x_i = x_k$. When $k$ is odd, 
    $(1 + (-1)^{k+1}) \ell(v_0) = \sum_{i=1}^{k}(-1)^{i-1} x_i$ so that, in either case, 
    \[ \ell(v_0) + \ell(v_{k-1}) = x_k. \]
    As a result, the $\ell$-induced edge labeling coincides with the original edge labeling on $C$.
    
    Suppose $C'=w_0w_1\cdots w_{m-1}w_0$ is also a cycle with $w_0=v_0$ and corresponding edge labelings $x_1',\ldots,x_m'$.
    Analogously use Equations \ref{eqn: l(v_0) def} and \ref{eqn: inductive def of l(v_j)} to define $\ell'(w_j)$, $0\leq j \leq m-1$, by replacing 
    $\ell$ by $\ell'$, $v_j$ by $w_j$, $k$ by $m$, and $x_i$ by $x_i'$ so
    that the $\ell'$-induced edge labeling coincides with the original edge labeling on $C'$.    
    By the hypothesis, $\ell'(v_0) = \ell(v_0)$ and both are 0 if there is an even cycle through $v_0$.

    If $C$ and $C'$ intersect at vertices besides $v_0$, choose $n$ minimal, $1\leq n \leq m-1$, so that $w_n = v_j$ for some $j$, $1\leq j \leq k-1$. 
    Consider the cycle of length $n+k-j$, $C''$, that travels along $C'$ from $w_0= v_0$ to $w_n = v_j$ and then along $C$ from $v_j= w_n$ up to $v_0=w_0$. By hypothesis, $S(C)=S(C'')$ so that 
    \begin{multline} \label{eqn: sum over two circles}
         \sum_{i=1}^{j}(-1)^{i-1} x_i + \sum_{i=j+1}^{k}(-1)^{i-1} x_i = \\
        \sum_{i=1}^{n}(-1)^{i-1} x_i' + 
                (-1)^{n-j} \sum_{i=j+1}^{k}(-1)^{i-1} x_{i}.
    \end{multline}
    Now if $n$ and $j$ have the same parity, then $(-1)^{n-j}=1$ and, by subtracting one side of Equation \ref{eqn: sum over two circles} from the other, we see that  
    \begin{equation} \label{eqn: parts of circles}
        (-1)^{n-j} \sum_{i=1}^{j}(-1)^{i-1} x_i = \sum_{i=1}^{n}(-1)^{i-1} x_i'.
    \end{equation}
    However, if $n$ and $j$ have opposite parities, then so do $k$ and $n+k-j$. That forces each side of Equation \ref{eqn: sum over two circles} to be zero since either $C$ or $C''$ will be an even cycle. 
    Noting $(-1)^{n-j}=-1$ in this case and solving for the first half of each side of Equation \ref{eqn: sum over two circles} being set to zero, we see that Equation \ref{eqn: parts of circles} holds in this case as well.

    With Equation \ref{eqn: parts of circles} in hand, we can calculate $ \ell'(w_n)$ with the analogue of Equation \ref{eqn: inductive def of l(v_j)} for $\ell'$ to get
    \[ \ell'(w_n) = (-1)^{n-1} \left( - \ell(v_0) + (-1)^{n-j} \sum_{i=1}^{j}(-1)^{i-1} x_i \right). \]
    If $n$ and $j$ have the same parity, Equation \ref{eqn: inductive def of l(v_j)} shows that $\ell'(v_j)=\ell(v_j)$.
    If they have opposite parities, then, as already seen, $\ell(v_0)=0$ and Equation \ref{eqn: inductive def of l(v_j)} again shows that $\ell'(v_j)=\ell(v_j)$.

    Similarly, a straightforward inductive argument on the number of intersections between $C$ and $C'$ shows that $\ell$ and $\ell'$ always agree on $C\cap C'$. As every two distinct vertices of $G$ are contained in a cycle, we can consistently extend $\ell$ to a coloring of $V$ with the desired induced edge coloring and are done.
\end{proof}

\begin{remark}
    It seems likely that Theorem \ref{thm: cubic sum characterization} can be generalized to a characterization of any 3-balanced graph by replacing the existence of 3 disjoint perfect matchings with a ``3-balanced edge coloring" and the alternating sum condition.
\end{remark}

The next criterion for being 3-balanced is more abstract, but may admit counting arguments. We begin with a definition.

\begin{definition} \label{def: c3bal data}
    We say that $(\{V_i\}, \{s_{ij}\})$ is a \emph{3-balanced cubic dataset} if
    \begin{enumerate}
        \item $\{V_i \mid i\in\Z_3\}$ are disjoint sets 
        \item $\{s_{ij}: V_i \rightarrow V_j \mid i,j\in\Z_3\}$ are bijections with $s_{ij}^{-1} = s_{ji}$
        \item $s_{ii}$, $i\in\Z_3$, has no fixed points.
    \end{enumerate}
\end{definition}

Note that Theorem \ref{thm: cubic bijection construction} below will show that $|V_i|=|V_j|\in 2\Z$, $i,j\in\Z_3$.

\begin{theorem} \label{thm: cubic bijection construction}
    Given a 3-balanced cubic dataset, $(\{V_i\}, \{s_{ij}\})$, form the graph $G=(V, E)$ with coloring $\ell$ by setting
    \begin{enumerate}
        \item $V = \coprod_{i=1}^3 V_i$
        \item $\ell(v) = i$ for $v\in V_i, i\in\Z_3$
        \item $E = \{ v s_{ij}(v) \mid v\in V_i, i,j\in\Z_3 \}$.
    \end{enumerate}
    Then $G$ is cubic and 3-balanced. Moreover, every cubic 3-balanced graph is of this form for some 3-balanced cubic dataset.
\end{theorem}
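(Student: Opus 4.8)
The plan is to prove both directions by directly analyzing neighborhoods, leveraging the single key observation that a cubic graph is 3-balanced \emph{precisely} when every vertex has exactly one neighbor of each color (since $|N(v)\cap V_0|+|N(v)\cap V_1|+|N(v)\cap V_2| = d(v) = 3$ with the three terms equal).

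For the forward direction, I would fix $v\in V_i$ in the constructed graph $G$ and identify its neighbors. For each $j\in\Z_3$ the edge $v\,s_{ij}(v)$ joins $v$ to a vertex of color $j$; conversely, any edge of $E$ incident to $v$, say $w\,s_{k\ell}(w)$, must have either $w=v$ (giving $v\,s_{i\ell}(v)$) or $s_{k\ell}(w)=v$, in which case $\ell=i$ and $w=s_{\ell k}(v)=s_{ik}(v)$ by axiom (2), so the edge is again of the form $v\,s_{ik}(v)$. Since the vertices $s_{ij}(v)\in V_j$ lie in pairwise disjoint color classes, $v$ has exactly three distinct neighbors, one of each color; hence $\ell$ is 3-balanced and $d(v)=3$. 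Two small points need care: $G$ has no loops because $s_{ii}$ is fixed-point-free (axiom (3)), and no multi-edges because an edge between $v\in V_i$ and $w\in V_j$ forces $w=s_{ij}(v)$, which is unique — using, when $i=j$, that $s_{ii}^{-1}=s_{ii}$ is an involution. Incidentally this also shows $s_{ii}$ is a fixed-point-free involution, so $|V_i|\in2\Z$, and the bijections $s_{ij}$ give $|V_i|=|V_j|$, justifying the observation stated before the theorem.

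For the converse, let $G$ be cubic and 3-balanced with coloring $\ell$, set $V_i=\ell^{-1}(i)$, and define $s_{ij}(v)$ to be the unique neighbor of $v\in V_i$ lying in $V_j$ (well-defined by the key observation). I would then verify the three dataset axioms: disjointness of the $V_i$ is clear; each $s_{ij}$ is injective, since $s_{ij}(v)=s_{ij}(w)=u$ would make $v,w$ two $V_i$-neighbors of $u$, forcing $v=w$, hence bijective on finite sets; $s_{ij}^{-1}=s_{ji}$ because $v\,s_{ij}(v)\in E$ exhibits $v$ as the unique $V_i$-neighbor of $s_{ij}(v)$; and $s_{ii}$ has no fixed point since $G$ is loopless. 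Finally I would check that the construction applied to $(\{V_i\},\{s_{ij}\})$ returns $(G,\ell)$: the vertex sets and colorings agree by definition, every edge $v\,s_{ij}(v)$ of the constructed graph is an edge of $G$, and any edge $vw$ of $G$ with $v\in V_i$, $w\in V_j$ satisfies $w=s_{ij}(v)$, so the two edge sets coincide.

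I do not expect a serious obstacle; the argument is a routine unwinding of definitions. The only place demanding attention is the bookkeeping around the diagonal maps $s_{ii}$ — confirming that axiom (3) is exactly what forbids loops, that axiom (2) specialized to $i=j$ makes $s_{ii}$ an involution so monochromatic edges are not double-counted, and that "cubic + 3-balanced" collapses to "exactly one neighbor of each color," which drives every step of the proof.
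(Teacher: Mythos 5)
Your proof is correct and follows essentially the same route as the paper: the paper constructs the dataset from a cubic 3-balanced graph by taking $s_{ij}(v)$ to be the unique neighbor of $v$ colored $j$ and treats the forward direction as straightforward, exactly as you do. Your extra bookkeeping (loops excluded by axiom (3), $s_{ii}$ an involution, degree exactly $3$) simply fills in details the paper leaves implicit.
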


\begin{proof}
    Given a dataset, $(\{V_i\}, \{s_{ij}\})$, let $G$ and $\ell$ be constructed as in the statement of the theorem. It is straightforward to see that $G$ is cubic and 3-balanced.
    
    Conversely, to see subjectivity, 
    let $G$ be a 3-balanced graph with coloring $\ell$. 
    Define $V_i$, $i\in\Z_3$, as in Definition \ref{def: V_i and E_ij} so that $V = \coprod_{i=1}^3 V_i$,  $\ell(V_i)=\{i\}$, and $|V_i|=|V_j|$, $i,j\in\Z_3$, by Theorem \ref{thm: regular vertex number}. Finally, if $v\in V_i$, $i\in\Z_3$, and $j\in\Z_3$, let $s_{ij}(v)\in V_j$ be the unique vertex adjacent to $v_i$ colored by $j$. As $G$ is 3-balanced and cubic, $s_{ij}$ is a bijection and, by construction,$s_{ij}^{-1} = s_{ji}$ with $s_{ii}$ having no fixed points. As a result, $(\{V_i\}, \{s_{ij}\})$ is a dataset that generates $G$.
\end{proof}

\section{On the Number of Small and Large Cubic 3-Balanced Graphs} \label{sec: large and small cubic 3-bal}

In this section, we look at the number of cubic 3-balanced graphs for very small graphs and for very large ones.
We begin with classifying all connected cubic 3-balanced graphs on 6 vertices and 12 vertices. Note that Theorem \ref{thm: regular vertex number} shows that cubic 3-balanced graphs have orders that are divisible by 6.

We can easily see that both cubic graphs on 6 vertices, the triangular prism and $K_{3,3}$, are 3-balanced. See Figure \ref{fig: Cubic on 6}.

\begin{theorem} \label{thm: order 6 class}
    $K_{3,3}$ and the triangular prism are 3-balanced
    so that all connected cubic graphs on 6 vertices are 3-balanced.
\end{theorem}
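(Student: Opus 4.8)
The plan is to exhibit explicit 3-balanced colorings of both cubic graphs on six vertices, since there are exactly two of them (the triangular prism $C_3 \Box K_2$ and $K_{3,3}$), and then invoke the fact that a graph is 3-balanced if and only if each of its connected components is, together with the observation that these are the only connected cubic graphs on six vertices.

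First I would handle $K_{3,3}$. Write the bipartition as $\{a_0,a_1,a_2\}$ and $\{b_0,b_1,b_2\}$ with every $a_i$ adjacent to every $b_j$. Color $\ell(a_i)=\ell(b_i)=i$ for $i\in\Z_3$. Then $N(a_i)=\{b_0,b_1,b_2\}$ carries all three colors, and likewise $N(b_i)=\{a_0,a_1,a_2\}$ carries all three colors, so the coloring is 3-balanced. (Alternatively, one can simply note $K_{3,3} = \overline{K_3}\,\nabla\,\overline{K_3}$ is the join along $K_2$ of two copies of $\overline{K_{3}}$, hence 3-balanced by the earlier theorem on joins of complements of complete graphs; but the direct argument is cleaner here.)

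Next I would handle the triangular prism. Label the two triangles $v_0 v_1 v_2$ and $u_0 u_1 u_2$ with rung edges $v_i u_i$. Color $\ell(v_i)=\ell(u_i)=i$. Then $N(v_i)=\{v_{i-1},v_{i+1},u_i\}$ has colors $\{i-1,i+1,i\}=\Z_3$, and symmetrically $N(u_i)=\{u_{i-1},u_{i+1},v_i\}$ has colors $\Z_3$; so this coloring is 3-balanced. This is in fact the instance $G(3,1)$-type prism, matching the $m=6$ Möbius/prism-flavored constructions appearing earlier, though we simply check it by hand.

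Finally I would justify the "all connected cubic graphs on 6 vertices" clause: a cubic graph on six vertices has $9$ edges, and it is classical (and easy to verify by a short case analysis on whether the graph is bipartite, or on the girth) that the only two such connected graphs are $K_{3,3}$ and the triangular prism. Combined with Theorem \ref{thm: regular vertex number}, which forces a cubic 3-balanced graph to have order divisible by $6$, this completes the classification on six vertices. The only mild obstacle is the enumeration of connected cubic graphs on six vertices, but this is standard and can be dispatched in a sentence or two (for instance: the complement of a cubic graph on six vertices is $2$-regular, hence a disjoint union of cycles covering six vertices — either $C_6$, $C_3+C_3$, or $C_4+C_2$ with $C_2$ disallowed as simple — giving complements $C_6$ and $C_3\cup C_3$, whose complements are exactly $K_{3,3}$ and the prism).
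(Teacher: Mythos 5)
Your proposal is correct and takes essentially the same approach as the paper: the paper simply exhibits the colorings $\ell(v_i)=\ell(u_i)=i$ on the prism and the analogous coloring on $K_{3,3}$ (in its Figure of cubic graphs on 6 vertices), exactly as you do. Your complementation argument ($2$-regular complements being unions of cycles, giving $C_6$ and $C_3\cup C_3$) is a welcome explicit justification of the enumeration that the paper leaves as a known fact.
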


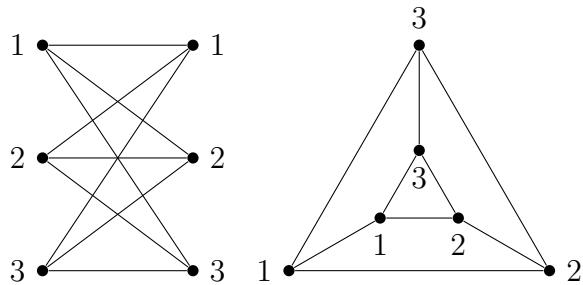
\begin{figure}[H]
    \centering

    \begin{tikzpicture} \label{Graph K33}
        \node[circle, fill, inner sep=1.5pt, label=left:1] (v1) at (0,1.5) {};
        \node[circle, fill, inner sep=1.5pt, label=left:2] (v2) at (0,0) {};
        \node[circle, fill, inner sep=1.5pt, label=left:3] (v3) at (0,-1.5) {};
        \node[circle, fill, inner sep=1.5pt, label=right:1] (u1) at (2,1.5) {};
        \node[circle, fill, inner sep=1.5pt, label=right:2] (u2) at (2,0) {};
        \node[circle, fill, inner sep=1.5pt, label=right:3] (u3) at (2,-1.5) {};
    
        \draw (v1) -- (u1);    \draw (v1) -- (u2);    \draw (v1) -- (u3);    \draw (v2) -- (u1);    \draw (v2) -- (u2);    \draw (v2) -- (u3);    \draw (v3) -- (u1);    \draw (v3) -- (u2);    \draw (v3) -- (u3);
    \end{tikzpicture}
    \begin{tikzpicture} \label{Triangular Prism}
        \def\n{3} 
        \def\m{3} 
        \def\radius{2cm} 
        \def\innerRadius{.6cm} 
              
        \node[circle, fill, inner sep=1.5pt, label=right:2] (v1) at ({360/\n * (0)-30}:\radius) {};
        \node[circle, fill, inner sep=1.5pt, label=above:3] (v2) at ({360/\n * (1)-30}:\radius) {};
        \node[circle, fill, inner sep=1.5pt, label=left:1] (v3) at ({360/\n * (2)-30}:\radius) {};
              
        \node[circle, fill, inner sep=1.5pt,label=below:2] (u1) at ({360/\m * (0)-30}:\innerRadius) {};
        \node[circle, fill, inner sep=1.5pt, label=below:3] (u2) at ({360/\m * (1)-30}:\innerRadius) {};
        \node[circle, fill, inner sep=1.5pt, label=below:1] (u3) at ({360/\m * (2)-30}:\innerRadius) {};
    
        \draw (v1) -- (u1);    \draw (v2) -- (u2);    \draw (v3) -- (u3);    \draw (v1) -- (v2);    \draw (v2) -- (v3);    \draw (v3) -- (v1);    \draw (u2) -- (u1);    \draw (u3) -- (u2);    \draw (u3) -- (u1);
    \end{tikzpicture}

    \caption{Cubic Graphs on 6 Vertices are 3-Balanced}
    \label{fig: Cubic on 6}
\end{figure}

Next we give a list of cubic 3-balanced forbidden subgraphs. See Figures \ref{fig:Diamond}, \ref{fig:Extended Bowtie}, \ref{fig:F1}, \ref{fig:F2}, \ref{fig:F3}, and \ref{fig:F4} for the notation $\{D, EB, F_1, \dots, F_4 \}$.

\begin{lemma} \label{lem: Forbidden Subgraphs}
    Cubic 3-balanced graphs cannot contain any subgraph from the set $\mathcal{F}=\{D, EB, F_1, \dots, F_4 \}$.
\end{lemma}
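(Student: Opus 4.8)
The plan is to argue one forbidden subgraph at a time, in each case assuming toward a contradiction that a cubic 3-balanced graph $G$ contains the configuration in question, and then using the local structure together with Theorem \ref{thm: perfmatch} (the induced edge coloring is a Tait coloring) and Theorem \ref{thm: regular vertex number} (colors equidistribute) to derive an impossibility. The key observation driving every case is that, in a cubic 3-balanced graph, the vertex coloring $\ell:V\to\Z_3$ forces a rigid local pattern: if $v$ has color $a$, its three neighbors must carry colors $a-1,a,a+1$ (one of each), so in particular no two neighbors of $v$ share a color, and each vertex has exactly one neighbor of its own color. Thus the ``same-color'' edges form a perfect matching $M_{ii}$ in the language of Definition \ref{def: V_i and E_ij}, and the constraint that every neighborhood is rainbow is extremely restrictive on small dense subgraphs.

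Concretely, I would proceed as follows. First, the diamond $D$ (i.e.\ $K_4$ minus an edge): it contains a vertex $v$ adjacent to two vertices $x,y$ that are themselves adjacent, and $x,y$ are adjacent to a common further vertex; trace the color forced on $x$ and $y$ from $\ell(v)$, note $\ell(x)\ne\ell(y)$ since they are both neighbors of $v$, then use the edge $xy$ and the shared neighbor to force a repeated color in some neighborhood. (Equivalently: $D$ contains a triangle $vxy$, and in a triangle the three vertices must get three distinct colors; the fourth vertex adjacent to two of them then sees two distinct colors among those two, fine so far, but its third neighbor and the leftover adjacency forces the clash — I would work out the short case check.) Then the extended bowtie $EB$ and each of $F_1,\dots,F_4$ are handled the same way: identify a vertex whose closed-in neighborhood within the subgraph already determines, up to the symmetries of Theorem \ref{thm: easy modification of a 3-bal coloring}, the colors of all vertices of the subgraph, and then exhibit an edge of the subgraph joining two vertices of equal color, or a vertex two of whose subgraph-neighbors have equal color, contradicting 3-balancedness. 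For the ones containing triangles, the ``triangles are rainbow'' fact collapses the casework immediately; for any triangle-free members of $\mathcal F$, I would instead propagate colors along a spanning tree of the subgraph and check the remaining non-tree edges. In several cases it is cleanest to phrase the contradiction via the induced edge coloring: a repeated vertex color on an edge would make that edge's induced label equal to $2\ell(v)$, and since two of the three edges at a vertex cannot then be distinct, the Tait-coloring conclusion of Theorem \ref{thm: perfmatch} fails.

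I expect the main obstacle to be purely organizational rather than deep: there is no single clever idea, but six separate small finite checks, and the risk is in getting the orientation/labelling of each $F_i$ right against its figure and not overlooking a color assignment. To keep this manageable I would reduce the casework at the outset by recording two lemmas-within-the-proof: (a) every triangle in a cubic 3-balanced graph is rainbow-colored; (b) if two vertices $u,w$ have two common neighbors $x,y$, then $\{\ell(x),\ell(y)\}$ has two elements and moreover $\ell(u)=\ell(w)$ is impossible unless $u,w$ are non-adjacent and $\ell(u)$ is the third color — and then a third adjacency (as present in each $F_i$) kills even that. With (a) and (b) in hand, each member of $\mathcal F$ falls in a line or two. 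Finally I would remark that it suffices to forbid these as subgraphs (not merely induced subgraphs) because adding edges to $G$ only adds constraints — more precisely, the argument in each case only uses the presence of the listed edges and the cubic/3-balanced hypotheses on the ambient $G$, never the absence of other edges — so the configurations are genuinely forbidden.
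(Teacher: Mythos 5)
Your overall strategy is the same as the paper's: since the graph is cubic, 3-balancedness says every open neighborhood is rainbow, so colors propagate rigidly; one arbitrarily colors a triangle (necessarily rainbow) or the neighborhood of a distinguished vertex of each member of $\mathcal{F}$, lets the remaining colors be forced, and checks that some vertex ends up with two like-colored neighbors. Your remark that only the presence of the listed edges is used (so the configurations are forbidden as subgraphs, not merely induced subgraphs) is also exactly how the paper's figures are meant to be read, and your auxiliary facts that triangles are rainbow and that the same-colored edges form a perfect matching are correct.

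The genuine flaw is in your proposed contradiction criteria. Exhibiting ``an edge of the subgraph joining two vertices of equal color'' is not a contradiction: as you yourself observe, in a cubic 3-balanced graph every vertex has exactly one neighbor of its own color, so monochromatic edges always exist (the sets $E_{ii}$ together form a perfect matching). The Tait-coloring justification you offer for it is likewise wrong: an edge $uv$ with $\ell(u)=\ell(v)=a$ receives induced label $2a$, while the other two edges at $u$ receive $2a\pm 1$, so the three labels at $u$ remain distinct and Theorem \ref{thm: perfmatch} is untouched; the induced coloring fails to be proper only when some vertex has two neighbors of the same color, and that is the one legitimate local contradiction (the one the paper's forced colorings exhibit). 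Relatedly, the exception clause in your observation (b) is vacuous: if $u\neq w$ share even one common neighbor $x$, then $\ell(u)\neq\ell(w)$ outright, else $x$ sees a repeated color. With the criterion corrected to ``some vertex has two equally colored neighbors,'' your six finite checks go through and reproduce the paper's argument; but as written, the monochromatic-edge test would let you declare spurious contradictions or stop before reaching the real one, and since the entire content of the lemma is those six verifications, they must be carried out against the correct test (and, as you anticipate, actually carried out case by case, which your proposal defers).
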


\begin{proof}
     These subgraphs are colored in Figure \ref{fig: Forbidden Subgraphs} as follows: Arbitrarily color a triangle or the neighborhood of a chosen vertex (indicated by a square), after which rest of the colors are forced. From these forced colorings it is straightforward to see that at least one neighborhood is not 3-balanced.
\end{proof}

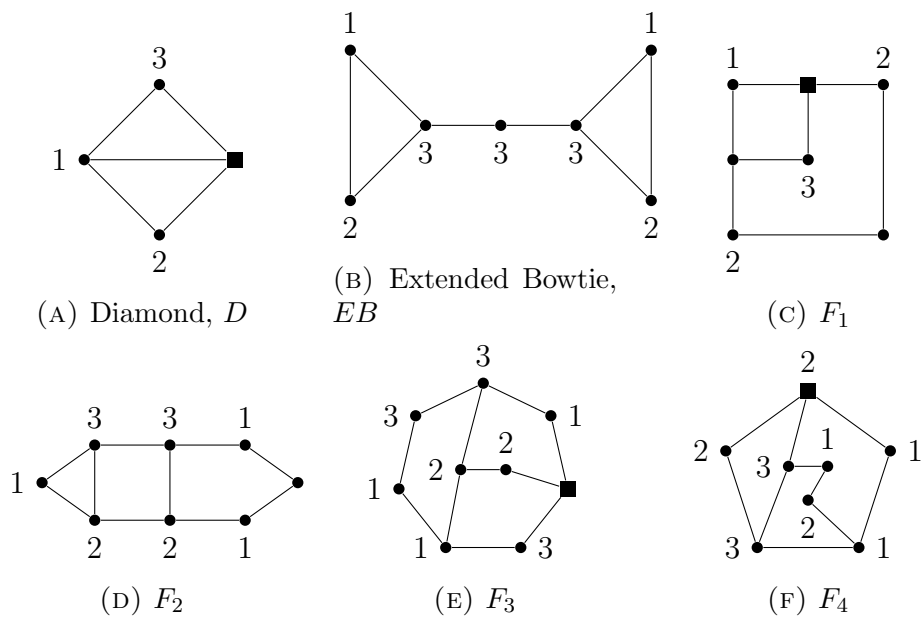
\begin{figure}[H]
\centering
\begin{subfigure}{0.3\textwidth}
    \begin{center}
        \begin{tikzpicture} 
        \node[circle, fill, inner sep=1.5pt,label=above:3] (v1) at (0,1) {};
        \node[circle, fill, inner sep=1.5pt,label=left:1] (v2) at (-1,0) {};
        \node[circle, fill, inner sep=1.5pt,label=below:2] (v3) at (0,-1) {};
        \node[rectangle, fill, inner sep=3pt] (v4) at (1,0) {};
       
        \draw (v1) -- (v2);    \draw (v2) -- (v3);    \draw (v3) -- (v4);    \draw (v4) -- (v1);    \draw (v2) -- (v4);
    \end{tikzpicture}
    \caption{Diamond, $D$}
    \label{fig:Diamond}
    \end{center}
\end{subfigure}
\hfill
\begin{subfigure}{0.3\textwidth}
    \begin{center}
        \begin{tikzpicture}
        \node[circle, fill, inner sep=1.5pt,label=below:3] (v1) at (-1,0) {};
        \node[circle, fill, inner sep=1.5pt,label=above:1] (v2) at (-2,1) {};
        \node[circle, fill, inner sep=1.5pt,label=below:2] (v3) at (-2,-1) {};
        \node[circle, fill, inner sep=1.5pt,label=below:3] (u1) at (1,0) {};
        \node[circle, fill, inner sep=1.5pt,label=above:1] (u2) at (2,1) {};
        \node[circle, fill, inner sep=1.5pt,label=below:2] (u3) at (2,-1) {};
        \node[circle, fill, inner sep=1.5pt,label=below:3] (w1) at (0,0) {};
    
        \draw (v1) -- (v2);    \draw (v1) -- (v3);    \draw (v3) -- (v2);    \draw (u1) -- (u2);    \draw (u1) -- (u3);    \draw (u3) -- (u2);    \draw (v1) -- (w1);    \draw (u1) -- (w1);
    \end{tikzpicture}
    \caption{Extended Bowtie, $EB$}
    \label{fig:Extended Bowtie}
    \end{center}
\end{subfigure}
\hfill
\begin{subfigure}{0.3\textwidth}
    \begin{center}
        \begin{tikzpicture} 
        \node[circle, fill, inner sep=1.5pt] (v1) at (0,0) {};
        \node[circle, fill, inner sep=1.5pt,label=above:1] (v2) at (0,1) {};
        \node[rectangle, fill, inner sep=3pt] (v3) at (1,1) {};
        \node[circle, fill, inner sep=1.5pt,label=below:3] (v4) at (1,0) {};
        \node[circle, fill, inner sep=1.5pt,label=above:2] (u1) at (2,1) {};
        \node[circle, fill, inner sep=1.5pt] (u2) at (2,-1) {};
        \node[circle, fill, inner sep=1.5pt,label=below:2] (u3) at (0,-1) {};
    
        \draw (v1) -- (v2);    \draw (v2) -- (v3);    \draw (v3) -- (v4);    \draw (v4) -- (v1);    \draw (v3) -- (u1);    \draw (u1) -- (u2);    \draw (u2) -- (u3);    \draw (v1) -- (u3);
    \end{tikzpicture}
    \caption{$F_1$}
    \label{fig:F1}
    \end{center}
\end{subfigure}
\hfill
\begin{subfigure}{0.3\textwidth}
    \begin{center}
        \begin{tikzpicture} 
        \node[circle, fill, inner sep=1.5pt,label=below:2] (v1) at (0,0) {};
        \node[circle, fill, inner sep=1.5pt,label=below:2] (v2) at (-1,0) {};
        \node[circle, fill, inner sep=1.5pt,label=left:1] (v3) at (-1.7,.5) {};
        \node[circle, fill, inner sep=1.5pt,label=above:3] (v4) at (-1,1) {};
        \node[circle, fill, inner sep=1.5pt,label=above:3] (v5) at (0,1) {};
        \node[circle, fill, inner sep=1.5pt,label=above:1] (v6) at (1,1) {};
        \node[circle, fill, inner sep=1.5pt] (v7) at (1.7,.5) {};
        \node[circle, fill, inner sep=1.5pt,label=below:1] (v8) at (1,0) {};
    
        \draw (v1) -- (v2);        \draw (v2) -- (v3);        \draw (v3) -- (v4);        \draw (v4) -- (v5);        \draw (v5) -- (v6);        \draw (v6) -- (v7);        \draw (v7) -- (v8);        \draw (v8) -- (v1);        \draw (v4) -- (v2);        \draw (v1) -- (v5);
    \end{tikzpicture}
    \caption{$F_2$}
    \label{fig:F2}
    \end{center}
\end{subfigure}
\hfill
\begin{subfigure}{0.3\textwidth}
    \begin{center}
        \begin{tikzpicture} 
    \def\n{7} 
    \def\m{2} 
    \def\radius{1.15cm} 
    \def\innerRadius{.3cm} 
          
    \node[rectangle, fill, inner sep=3pt] (v1) at ({360/\n * (0)-(720/7 -90)}:\radius) {};
    \node[circle, fill, inner sep=1.5pt,label=right:1] (v2) at ({360/\n * (1)-(720/7 -90)}:\radius) {};
    \node[circle, fill, inner sep=1.5pt,label=above:3] (v3) at ({360/\n * (2)-(720/7 -90)}:\radius) {};
    \node[circle, fill, inner sep=1.5pt,label=left:3] (v4) at ({360/\n * (3)-(720/7 -90)}:\radius) {};
    \node[circle, fill, inner sep=1.5pt,label=left:1] (v5) at ({360/\n * (4)-(720/7 -90))}:\radius) {};
    \node[circle, fill, inner sep=1.5pt,label=left:1] (v6) at ({360/\n * (5)-(720/7 -90)}:\radius) {};
    \node[circle, fill, inner sep=1.5pt,,label=right:3] (v7) at ({360/\n * (6)-(720/7 -90)}:\radius) {};
    
    \node[circle, fill, inner sep=1.5pt,label=above:2] (u1) at ({360/\m * (0)}:\innerRadius) {};
    \node[circle, fill, inner sep=1.5pt,label=left:2] (u2) at ({360/\m * (1)}:\innerRadius) {};

    \draw (v1) -- (v2);    \draw (v2) -- (v3);    \draw (v3) -- (v4);    \draw (v4) -- (v5);    \draw (v5) -- (v6);    \draw (v6) -- (v7);    \draw (v7) -- (v1);    \draw (v1) -- (u1);    \draw (u1) -- (u2);    \draw (v3) -- (u2);    \draw (v6) -- (u2);
\end{tikzpicture}
    \caption{$F_3$}
    \label{fig:F3}
    \end{center}
\end{subfigure}
\hfill
\begin{subfigure}{0.3\textwidth}
    \begin{center}
        \begin{tikzpicture} 
    \def\n{5} 
    \def\m{3} 
    \def\radius{1.15cm} 
    \def\innerRadius{.3cm} 
          
    \node[circle, fill, inner sep=1.5pt,label=right:1] (v1) at ({360/\n * (0)+18}:\radius) {};
    \node[rectangle, fill, inner sep=3pt,label=above:2] (v2) at ({360/\n * (1)+18}:\radius) {};
    \node[circle, fill, inner sep=1.5pt,label=left:2] (v3) at ({360/\n * (2)+18}:\radius) {};
    \node[circle, fill, inner sep=1.5pt,label=left:3] (v4) at ({360/\n * (3)+18}:\radius) {};
    \node[circle, fill, inner sep=1.5pt,label=right:1] (v5) at ({360/\n * (4))+18}:\radius) {};
    
    \node[circle, fill, inner sep=1.5pt,label=above:1] (u1) at ({360/\m * (0)+30}:\innerRadius) {};
    \node[circle, fill, inner sep=1.5pt,label=left:3] (u2) at ({360/\m * (1)+30}:\innerRadius) {};
    \node[circle, fill, inner sep=1.5pt,label=below:2] (u3) at ({360/\m * (2)+30}:\innerRadius) {};
          
    \draw (v1) -- (v2);    \draw (v2) -- (v3);    \draw (v3) -- (v4);    \draw (v4) -- (v5);    \draw (v5) -- (v1);
    \draw (u1) -- (u2);
    \draw (u1) -- (u3);
    \draw (u2) -- (v2);
    \draw (u2) -- (v4);
    \draw (u3) -- (v5);
\end{tikzpicture}
    \caption{$F_4$}
    \label{fig:F4}
    \end{center}
\end{subfigure}
        
\caption{Forbidden Subgraphs in 3-Balanced Cubic Graphs}
\label{fig: Forbidden Subgraphs}
\end{figure}

With Lemma \ref{lem: Forbidden Subgraphs} in hand, we can classify the connected cubic 3-balanced graphs on 12 vertices.

\begin{theorem} \label{thm: cubic on 12}
    There are 85 connected cubic graphs on 12 vertices, exactly 17 of which are 3-balanced. See the Appendix, \S\ref{sec: appendix}, for the list.
\end{theorem}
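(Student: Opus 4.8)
The plan is to combine a quoted enumeration with a finite, structured case analysis. First, the count of $85$ connected cubic graphs on $12$ vertices is a known fact, obtained from the standard enumeration of cubic graphs (for example via \texttt{nauty}'s \texttt{geng}); this is quoted rather than reproved. Label the resulting graphs $G_1,\dots,G_{85}$. Since, by Theorem \ref{thm: regular vertex number}, every cubic graph on $12$ vertices with a $3$-balanced coloring must have color classes of size $4$ and satisfy $|E_{ij}|=4$, $|E_{ii}|=2$, and, by Theorem \ref{thm: perfmatch}, the induced edge coloring must be a Tait coloring, we already have strong necessary conditions to exploit.

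Second, I would establish that exactly $68$ of these graphs are \emph{not} $3$-balanced. The bulk of this is handled by Lemma \ref{lem: Forbidden Subgraphs}: scanning each $G_i$ for an occurrence of some member of $\mathcal{F}=\{D,EB,F_1,\dots,F_4\}$ as a subgraph, and discarding any graph that contains one. Each such containment is a small, explicitly exhibitable fact that a reader can check by inspection, so this step is human-verifiable. For any graph that survives the forbidden-subgraph filter but is still not $3$-balanced, I would argue directly: using the equidistribution and Tait-coloring constraints above, one enumerates the (finitely many, and in practice very few) candidate colorings and checks that each fails the $3$-balanced condition at some vertex. Equivalently, a brute-force search over all $3^{12}=531441$ vertex colorings of each graph settles the matter outright and independently corroborates the forbidden-subgraph argument.

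Third, for the remaining $17$ graphs I would exhibit an explicit $3$-balanced vertex coloring of each, which is precisely what the Appendix records; correctness of each is confirmed by inspecting the open neighborhood of every vertex and verifying it contains one vertex of each color. Combining the two directions gives exactly $17$ $3$-balanced graphs among the $85$.

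The main obstacle is organizational rather than mathematical: the underlying verification is a finite computation, so the real work is presenting the elimination in a form a reader can audit without rerunning it. Lemma \ref{lem: Forbidden Subgraphs} does most of that work through a structural, inspectable criterion, which leaves only a short residual list of graphs to be treated by ad hoc coloring arguments and the divisibility/Tait constraints from \S\ref{sec: order constraints} and \S\ref{sec: cubic}.
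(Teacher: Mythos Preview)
Your proposal is correct and follows essentially the same approach as the paper: quote the enumeration of the $85$ connected cubic graphs on $12$ vertices, eliminate the non-$3$-balanced ones primarily via the forbidden subgraphs of Lemma~\ref{lem: Forbidden Subgraphs}, and exhibit explicit colorings for the remaining $17$ in the Appendix. The only cosmetic differences are that the paper cites \cite{Bussemaker} rather than \texttt{nauty}, separately dispatches graphs $1$--$4$ via the bridgeless criterion (from the discussion after Theorem~\ref{thm: perfmatch}), and handles Tietze's Graph (graph $74$) through its chromatic index rather than a direct coloring search---all of which fall under your ``argue directly'' residual step.
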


\begin{proof}
    As this proof mostly amounts to going through the list of the 85 connected cubic graphs, listed in \cite{Bussemaker}, we relegate the proof to the Appendix in \S\ref{sec: appendix}.
    We will explicitly note in the Appendix the graphs that are ruled out by each subgraph from Lemma \ref{lem: Forbidden Subgraphs}. We also give explicit colorings for the 3-balanced graphs. Also note that Tietze's Graph has chromatic index 4 and is thus not 3-balanced.
\end{proof}

We halt our investigation of small 3-balanced cubic graphs here, as there are 41,301 cubic graphs on 18 vertices, \cite{Bussemaker}.

We end on a note about the relative frequency of very large 3-balanced graphs by showing that they become relatively vanishingly rare. 
Let $\mathcal{G}_n$ denote the number of cubic 3-balanced graphs on $6n$ vertices and let $\mathcal{X}_n$ denote the number of cubic graphs on $6n$ vertices. 
Based off of work in \cite{DomaticNumber}, Aram \emph{et.~ al.~}~ investigate in \cite{TotalDomaticNumber} the \emph{total domatic number} of a graph, $d_t(G)$, the maximum number of total dominating sets into which the vertex set of $G$ can be partitioned. 
Note that a cubic graph is 3-balanced if and only if the total domatic number is 3. 
In \cite{TotalDomaticNumber}, Theorem 3.1, it is therefore shown that
\[  \lim_{n\rightarrow\infty} \frac{\mathcal{G}_n}{\mathcal{X}_n} = 0 \]
so that cubic 3-balanced graphs become relatively vanishingly rare.

\section{Concluding Remarks} \label{sec: concluding remarks}

The following is a list of questions for future work.

\begin{question}
    What is the number of non-isomorphic 3-balanced graphs on $n$ vertices?
\end{question}






\begin{question}
    Is it possible to classify all forbidden subgraphs of cubic 3-balanced graphs, besides those shown in Figure \ref{fig: Forbidden Subgraphs}, and is this list finite?
\end{question}

 
 \begin{question}
     What 6-regular graphs are 3-balanced, and what are the forbidden subgraphs for 6-regular graphs?
 \end{question}
 
See \cite{Meringer} for the number of non-isomorphic $r$-regular graphs on $n$ vertices. 

\begin{question}
    Are there characterizations of 6-regular graphs similar to Theorem \ref{thm: cubic sum characterization} and Theorem \ref{thm: cubic bijection construction}?
\end{question}

\begin{question}
    Are there any algebraic properties of 3-balanced graphs?
\end{question}

For example, some arc-transitive graphs are 3-balanced, e.g., the Pappus Graph, however, Tutte's 8-cage is not 3-balanced.

\begin{question}
    If $G_1 \nabla G_2$ is 3-balanced, are $G_1$ and $G_2$ are 3-balanced?
\end{question}

\begin{question}
    Do $3$-balanced graphs decompose into a sum of subgraphs each of which is 3-balanced with an equidistribution of colors (see Figure \ref{fig: Non-Regular 3-balanced graph})?
\end{question}

And finally, it would be interesting to investigate the notion of $k$-balanced for larger values of $k\in \Z^+$.

\newpage
\appendix
\section{Cubic Graphs on 12 Vertices} \label{sec: appendix}

In this appendix we present the details for Theorem \ref{thm: cubic on 12} by providing explicit colorings for the 17 3-balanced graphs, and by indicating why the other graphs are not 3-balanced. The graph numbers correspond with those from \cite{Bussemaker}. We start with the 17 3-balanced graphs:

\begin{center}
    \begin{longtable}{|m{.5cm} | m{1cm}| m{8cm} | } 
     \hline
     $G\#$ & Girth & Coloring \\ 
     \hline
     11&  3 & 
     \begin{center}
        \begin{tikzpicture} \label{Graph 11}
            \def\n{12} 
            \def\radius{2cm} 
            
            \draw (0,0) circle (\radius);
            \node[circle, fill, inner sep=1.5pt, label=right:2] (v1) at ({360/\n * (0)}:\radius) {};
            \node[circle, fill, inner sep=1.5pt, label=right:2] (v2) at ({360/\n * (1)}:\radius) {};
            \node[circle, fill, inner sep=1.5pt, label=above:1] (v3) at ({360/\n * (2)}:\radius) {};
            \node[circle, fill, inner sep=1.5pt, label=above:3] (v4) at ({360/\n * (3)}:\radius) {};
            \node[circle, fill, inner sep=1.5pt, label=above:2] (v5) at ({360/\n * (4)}:\radius) {};
            \node[circle, fill, inner sep=1.5pt, label=left:2] (v6) at ({360/\n * (5)}:\radius) {};
            \node[circle, fill, inner sep=1.5pt, label=left:3] (v7) at ({360/\n * (6)}:\radius) {};
            \node[circle, fill, inner sep=1.5pt, label=left:1] (v8) at ({360/\n * (7)}:\radius) {};
            \node[circle, fill, inner sep=1.5pt, label=below:1] (v9) at ({360/\n * (8)}:\radius) {};
            \node[circle, fill, inner sep=1.5pt, label=below:3] (v10) at ({360/\n * (9)}:\radius) {};
            \node[circle, fill, inner sep=1.5pt, label=below:3] (v11) at ({360/\n * (10)}:\radius) {};
            \node[circle, fill, inner sep=1.5pt, label=right:1] (v12) at ({360/\n * (11)}:\radius) {};
        
            \draw (v1) -- (v11);            \draw (v2) -- (v10);            \draw (v3) -- (v12);            \draw (v4) -- (v7);            \draw (v5) -- (v9);            \draw (v6) -- (v8);
            
        \end{tikzpicture} 
        \end{center}\\ 
     \hline
     13 & 3 & 
         \begin{center}
        \begin{tikzpicture} \label{Graph 13}
            \def\n{12} 
            \def\radius{2cm} 
            
            \draw (0,0) circle (\radius);
            \node[circle, fill, inner sep=1.5pt, label=right:2] (v1) at ({360/\n * (0)}:\radius) {};
            \node[circle, fill, inner sep=1.5pt, label=right:1] (v2) at ({360/\n * (1)}:\radius) {};
            \node[circle, fill, inner sep=1.5pt, label=above:1] (v3) at ({360/\n * (2)}:\radius) {};
            \node[circle, fill, inner sep=1.5pt, label=above:2] (v4) at ({360/\n * (3)}:\radius) {};
            \node[circle, fill, inner sep=1.5pt, label=above:1] (v5) at ({360/\n * (4)}:\radius) {};
            \node[circle, fill, inner sep=1.5pt, label=left:3] (v6) at ({360/\n * (5)}:\radius) {};
            \node[circle, fill, inner sep=1.5pt, label=left:3] (v7) at ({360/\n * (6)}:\radius) {};
            \node[circle, fill, inner sep=1.5pt, label=left:1] (v8) at ({360/\n * (7)}:\radius) {};
            \node[circle, fill, inner sep=1.5pt, label=below:1] (v9) at ({360/\n * (8)}:\radius) {};
            \node[circle, fill, inner sep=1.5pt, label=below:2] (v10) at ({360/\n * (9)}:\radius) {};
            \node[circle, fill, inner sep=1.5pt, label=below:3] (v11) at ({360/\n * (10)}:\radius) {};
            \node[circle, fill, inner sep=1.5pt, label=right:3] (v12) at ({360/\n * (11)}:\radius) {};
        
            \draw (v1) -- (v10);            \draw (v2) -- (v12);            \draw (v3) -- (v11);            \draw (v4) -- (v7);            \draw (v5) -- (v8);            \draw (v6) -- (v9);
            
        \end{tikzpicture} 
        \end{center}\\ 
     \hline
     14& 3& 
        \begin{center}
        \begin{tikzpicture} \label{Graph 14}
            \def\n{12} 
            \def\radius{2cm} 
            
            \draw (0,0) circle (\radius);
            \node[circle, fill, inner sep=1.5pt, label=right:2] (v1) at ({360/\n * (0)}:\radius) {};
            \node[circle, fill, inner sep=1.5pt, label=right:1] (v2) at ({360/\n * (1)}:\radius) {};
            \node[circle, fill, inner sep=1.5pt, label=above:3] (v3) at ({360/\n * (2)}:\radius) {};
            \node[circle, fill, inner sep=1.5pt, label=above:3] (v4) at ({360/\n * (3)}:\radius) {};
            \node[circle, fill, inner sep=1.5pt, label=above:1] (v5) at ({360/\n * (4)}:\radius) {};
            \node[circle, fill, inner sep=1.5pt, label=left:2] (v6) at ({360/\n * (5)}:\radius) {};
            \node[circle, fill, inner sep=1.5pt, label=left:2] (v7) at ({360/\n * (6)}:\radius) {};
            \node[circle, fill, inner sep=1.5pt, label=left:1] (v8) at ({360/\n * (7)}:\radius) {};
            \node[circle, fill, inner sep=1.5pt, label=below:3] (v9) at ({360/\n * (8)}:\radius) {};
            \node[circle, fill, inner sep=1.5pt, label=below:3] (v10) at ({360/\n * (9)}:\radius) {};
            \node[circle, fill, inner sep=1.5pt, label=below:1] (v11) at ({360/\n * (10)}:\radius) {};
            \node[circle, fill, inner sep=1.5pt, label=right:2] (v12) at ({360/\n * (11)}:\radius) {};
        
            \draw (v1) -- (v3);            \draw (v2) -- (v11);            \draw (v4) -- (v6);            \draw (v5) -- (v8);            \draw (v7) -- (v9);            \draw (v10) -- (v12);
            
        \end{tikzpicture} 
        \end{center}\\ 
     \hline
     15&  3 & 
     \begin{center}
        \begin{tikzpicture} \label{Graph 15}
            \def\n{12} 
            \def\radius{2cm} 
            
            \draw (0,0) circle (\radius);
            \node[circle, fill, inner sep=1.5pt, label=right:2] (v1) at ({360/\n * (0)}:\radius) {};
            \node[circle, fill, inner sep=1.5pt, label=right:2] (v2) at ({360/\n * (1)}:\radius) {};
            \node[circle, fill, inner sep=1.5pt, label=above:3] (v3) at ({360/\n * (2)}:\radius) {};
            \node[circle, fill, inner sep=1.5pt, label=above:3] (v4) at ({360/\n * (3)}:\radius) {};
            \node[circle, fill, inner sep=1.5pt, label=above:2] (v5) at ({360/\n * (4)}:\radius) {};
            \node[circle, fill, inner sep=1.5pt, label=left:1] (v6) at ({360/\n * (5)}:\radius) {};
            \node[circle, fill, inner sep=1.5pt, label=left:1] (v7) at ({360/\n * (6)}:\radius) {};
            \node[circle, fill, inner sep=1.5pt, label=left:2] (v8) at ({360/\n * (7)}:\radius) {};
            \node[circle, fill, inner sep=1.5pt, label=below:3] (v9) at ({360/\n * (8)}:\radius) {};
            \node[circle, fill, inner sep=1.5pt, label=below:3] (v10) at ({360/\n * (9)}:\radius) {};
            \node[circle, fill, inner sep=1.5pt, label=below:1] (v11) at ({360/\n * (10)}:\radius) {};
            \node[circle, fill, inner sep=1.5pt, label=right:1] (v12) at ({360/\n * (11)}:\radius) {};
        
            \draw (v1) -- (v10);            \draw (v2) -- (v11);            \draw (v3) -- (v12);            \draw (v4) -- (v6);            \draw (v5) -- (v8);            \draw (v7) -- (v9);
            
        \end{tikzpicture} 
        \end{center}\\ 
     \hline
     16&  4 & 
     \begin{center}
        \begin{tikzpicture} \label{Graph 16}
            \def\n{12} 
            \def\radius{2cm} 
            
            \draw (0,0) circle (\radius);
            \node[circle, fill, inner sep=1.5pt, label=right:1] (v1) at ({360/\n * (0)}:\radius) {};
            \node[circle, fill, inner sep=1.5pt, label=right:2] (v2) at ({360/\n * (1)}:\radius) {};
            \node[circle, fill, inner sep=1.5pt, label=above:3] (v3) at ({360/\n * (2)}:\radius) {};
            \node[circle, fill, inner sep=1.5pt, label=above:1] (v4) at ({360/\n * (3)}:\radius) {};
            \node[circle, fill, inner sep=1.5pt, label=above:2] (v5) at ({360/\n * (4)}:\radius) {};
            \node[circle, fill, inner sep=1.5pt, label=left:2] (v6) at ({360/\n * (5)}:\radius) {};
            \node[circle, fill, inner sep=1.5pt, label=left:1] (v7) at ({360/\n * (6)}:\radius) {};
            \node[circle, fill, inner sep=1.5pt, label=left:3] (v8) at ({360/\n * (7)}:\radius) {};
            \node[circle, fill, inner sep=1.5pt, label=below:3] (v9) at ({360/\n * (8)}:\radius) {};
            \node[circle, fill, inner sep=1.5pt, label=below:1] (v10) at ({360/\n * (9)}:\radius) {};
            \node[circle, fill, inner sep=1.5pt, label=below:2] (v11) at ({360/\n * (10)}:\radius) {};
            \node[circle, fill, inner sep=1.5pt, label=right:3] (v12) at ({360/\n * (11)}:\radius) {};
        
            \draw (v1) -- (v10);            \draw (v2) -- (v11);            \draw (v3) -- (v12);            \draw (v4) -- (v7);            \draw (v5) -- (v8);            \draw (v6) -- (v9);
            
        \end{tikzpicture} 
        \end{center}\\ 
     \hline
     33&  3& 
     \begin{center}
        \begin{tikzpicture} \label{Graph 33}
            \def\n{12} 
            \def\radius{2cm} 
            
            \draw (0,0) circle (\radius);
            \node[circle, fill, inner sep=1.5pt, label=right:1] (v1) at ({360/\n * (0)}:\radius) {};
            \node[circle, fill, inner sep=1.5pt, label=right:3] (v2) at ({360/\n * (1)}:\radius) {};
            \node[circle, fill, inner sep=1.5pt, label=above:3] (v3) at ({360/\n * (2)}:\radius) {};
            \node[circle, fill, inner sep=1.5pt, label=above:1] (v4) at ({360/\n * (3)}:\radius) {};
            \node[circle, fill, inner sep=1.5pt, label=above:2] (v5) at ({360/\n * (4)}:\radius) {};
            \node[circle, fill, inner sep=1.5pt, label=left:2] (v6) at ({360/\n * (5)}:\radius) {};
            \node[circle, fill, inner sep=1.5pt, label=left:1] (v7) at ({360/\n * (6)}:\radius) {};
            \node[circle, fill, inner sep=1.5pt, label=left:3] (v8) at ({360/\n * (7)}:\radius) {};
            \node[circle, fill, inner sep=1.5pt, label=below:3] (v9) at ({360/\n * (8)}:\radius) {};
            \node[circle, fill, inner sep=1.5pt, label=below:1] (v10) at ({360/\n * (9)}:\radius) {};
            \node[circle, fill, inner sep=1.5pt, label=below:2] (v11) at ({360/\n * (10)}:\radius) {};
            \node[circle, fill, inner sep=1.5pt, label=right:2] (v12) at ({360/\n * (11)}:\radius) {};
        
            \draw (v1) -- (v7);            \draw (v2) -- (v6);            \draw (v3) -- (v5);            \draw (v4) -- (v10);            \draw (v8) -- (v12);            \draw (v9) -- (v11);
            
        \end{tikzpicture} 
        \end{center}\\ 
     \hline
     34&  3& 
     \begin{center}
        \begin{tikzpicture} \label{Graph 34}
            \def\n{12} 
            \def\radius{2cm} 
            
            \draw (0,0) circle (\radius);
            \node[circle, fill, inner sep=1.5pt, label=right:1] (v1) at ({360/\n * (0)}:\radius) {};
            \node[circle, fill, inner sep=1.5pt, label=right:3] (v2) at ({360/\n * (1)}:\radius) {};
            \node[circle, fill, inner sep=1.5pt, label=above:3] (v3) at ({360/\n * (2)}:\radius) {};
            \node[circle, fill, inner sep=1.5pt, label=above:1] (v4) at ({360/\n * (3)}:\radius) {};
            \node[circle, fill, inner sep=1.5pt, label=above:2] (v5) at ({360/\n * (4)}:\radius) {};
            \node[circle, fill, inner sep=1.5pt, label=left:2] (v6) at ({360/\n * (5)}:\radius) {};
            \node[circle, fill, inner sep=1.5pt, label=left:1] (v7) at ({360/\n * (6)}:\radius) {};
            \node[circle, fill, inner sep=1.5pt, label=left:3] (v8) at ({360/\n * (7)}:\radius) {};
            \node[circle, fill, inner sep=1.5pt, label=below:3] (v9) at ({360/\n * (8)}:\radius) {};
            \node[circle, fill, inner sep=1.5pt, label=below:1] (v10) at ({360/\n * (9)}:\radius) {};
            \node[circle, fill, inner sep=1.5pt, label=below:2] (v11) at ({360/\n * (10)}:\radius) {};
            \node[circle, fill, inner sep=1.5pt, label=right:2] (v12) at ({360/\n * (11)}:\radius) {};
        
            \draw (v1) -- (v7);            \draw (v2) -- (v6);            \draw (v3) -- (v5);            \draw (v4) -- (v10);            \draw (v8) -- (v11);            \draw (v9) -- (v12);
            
        \end{tikzpicture} 
        \end{center}\\ 
     \hline
     35&  3& 
     \begin{center}
        \begin{tikzpicture} \label{Graph 35}
            \def\n{12} 
            \def\radius{2cm} 
            
            \draw (0,0) circle (\radius);
            \node[circle, fill, inner sep=1.5pt, label=right:1] (v1) at ({360/\n * (0)}:\radius) {};
            \node[circle, fill, inner sep=1.5pt, label=right:3] (v2) at ({360/\n * (1)}:\radius) {};
            \node[circle, fill, inner sep=1.5pt, label=above:3] (v3) at ({360/\n * (2)}:\radius) {};
            \node[circle, fill, inner sep=1.5pt, label=above:1] (v4) at ({360/\n * (3)}:\radius) {};
            \node[circle, fill, inner sep=1.5pt, label=above:2] (v5) at ({360/\n * (4)}:\radius) {};
            \node[circle, fill, inner sep=1.5pt, label=left:2] (v6) at ({360/\n * (5)}:\radius) {};
            \node[circle, fill, inner sep=1.5pt, label=left:1] (v7) at ({360/\n * (6)}:\radius) {};
            \node[circle, fill, inner sep=1.5pt, label=left:3] (v8) at ({360/\n * (7)}:\radius) {};
            \node[circle, fill, inner sep=1.5pt, label=below:3] (v9) at ({360/\n * (8)}:\radius) {};
            \node[circle, fill, inner sep=1.5pt, label=below:1] (v10) at ({360/\n * (9)}:\radius) {};
            \node[circle, fill, inner sep=1.5pt, label=below:2] (v11) at ({360/\n * (10)}:\radius) {};
            \node[circle, fill, inner sep=1.5pt, label=right:2] (v12) at ({360/\n * (11)}:\radius) {};
        
            \draw (v1) -- (v7);            \draw (v2) -- (v5);            \draw (v3) -- (v6);            \draw (v4) -- (v10);            \draw (v8) -- (v11);            \draw (v9) -- (v12);
            
        \end{tikzpicture} 
        \end{center}\\ 
     \hline
     46&  3& 
     \begin{center}
        \begin{tikzpicture} \label{Graph 46}
            \def\n{12} 
            \def\radius{2cm} 
            
            \draw (0,0) circle (\radius);
            \node[circle, fill, inner sep=1.5pt, label=right:2] (v1) at ({360/\n * (0)}:\radius) {};
            \node[circle, fill, inner sep=1.5pt, label=right:1] (v2) at ({360/\n * (1)}:\radius) {};
            \node[circle, fill, inner sep=1.5pt, label=above:1] (v3) at ({360/\n * (2)}:\radius) {};
            \node[circle, fill, inner sep=1.5pt, label=above:3] (v4) at ({360/\n * (3)}:\radius) {};
            \node[circle, fill, inner sep=1.5pt, label=above:2] (v5) at ({360/\n * (4)}:\radius) {};
            \node[circle, fill, inner sep=1.5pt, label=left:1] (v6) at ({360/\n * (5)}:\radius) {};
            \node[circle, fill, inner sep=1.5pt, label=left:1] (v7) at ({360/\n * (6)}:\radius) {};
            \node[circle, fill, inner sep=1.5pt, label=left:3] (v8) at ({360/\n * (7)}:\radius) {};
            \node[circle, fill, inner sep=1.5pt, label=below:2] (v9) at ({360/\n * (8)}:\radius) {};
            \node[circle, fill, inner sep=1.5pt, label=below:2] (v10) at ({360/\n * (9)}:\radius) {};
            \node[circle, fill, inner sep=1.5pt, label=below:3] (v11) at ({360/\n * (10)}:\radius) {};
            \node[circle, fill, inner sep=1.5pt, label=right:3] (v12) at ({360/\n * (11)}:\radius) {};
        
            \draw (v1) -- (v5);            \draw (v2) -- (v12);            \draw (v3) -- (v10);            \draw (v4) -- (v8);            \draw (v6) -- (v11);            \draw (v7) -- (v9);
            
        \end{tikzpicture} 
        \end{center}\\ 
     \hline
     53&  3& 
     \begin{center}
        \begin{tikzpicture} \label{Graph 53}
            \def\n{12} 
            \def\radius{2cm} 
            
            \draw (0,0) circle (\radius);
            \node[circle, fill, inner sep=1.5pt, label=right:1] (v1) at ({360/\n * (0)}:\radius) {};
            \node[circle, fill, inner sep=1.5pt, label=right:3] (v2) at ({360/\n * (1)}:\radius) {};
            \node[circle, fill, inner sep=1.5pt, label=above:3] (v3) at ({360/\n * (2)}:\radius) {};
            \node[circle, fill, inner sep=1.5pt, label=above:1] (v4) at ({360/\n * (3)}:\radius) {};
            \node[circle, fill, inner sep=1.5pt, label=above:2] (v5) at ({360/\n * (4)}:\radius) {};
            \node[circle, fill, inner sep=1.5pt, label=left:2] (v6) at ({360/\n * (5)}:\radius) {};
            \node[circle, fill, inner sep=1.5pt, label=left:1] (v7) at ({360/\n * (6)}:\radius) {};
            \node[circle, fill, inner sep=1.5pt, label=left:3] (v8) at ({360/\n * (7)}:\radius) {};
            \node[circle, fill, inner sep=1.5pt, label=below:3] (v9) at ({360/\n * (8)}:\radius) {};
            \node[circle, fill, inner sep=1.5pt, label=below:1] (v10) at ({360/\n * (9)}:\radius) {};
            \node[circle, fill, inner sep=1.5pt, label=below:2] (v11) at ({360/\n * (10)}:\radius) {};
            \node[circle, fill, inner sep=1.5pt, label=right:2] (v12) at ({360/\n * (11)}:\radius) {};
        
            \draw (v1) -- (v7);            \draw (v2) -- (v11);            \draw (v3) -- (v5);            \draw (v4) -- (v10);            \draw (v6) -- (v9);            \draw (v8) -- (v12);
            
        \end{tikzpicture} 
        \end{center}\\ 
     \hline
     60&  3& 
     \begin{center}
        \begin{tikzpicture} \label{Graph 60}
            \def\n{12} 
            \def\radius{2cm} 
            
            \draw (0,0) circle (\radius);
            \node[circle, fill, inner sep=1.5pt, label=right:1] (v1) at ({360/\n * (0)}:\radius) {};
            \node[circle, fill, inner sep=1.5pt, label=right:3] (v2) at ({360/\n * (1)}:\radius) {};
            \node[circle, fill, inner sep=1.5pt, label=above:3] (v3) at ({360/\n * (2)}:\radius) {};
            \node[circle, fill, inner sep=1.5pt, label=above:1] (v4) at ({360/\n * (3)}:\radius) {};
            \node[circle, fill, inner sep=1.5pt, label=above:2] (v5) at ({360/\n * (4)}:\radius) {};
            \node[circle, fill, inner sep=1.5pt, label=left:2] (v6) at ({360/\n * (5)}:\radius) {};
            \node[circle, fill, inner sep=1.5pt, label=left:1] (v7) at ({360/\n * (6)}:\radius) {};
            \node[circle, fill, inner sep=1.5pt, label=left:3] (v8) at ({360/\n * (7)}:\radius) {};
            \node[circle, fill, inner sep=1.5pt, label=below:3] (v9) at ({360/\n * (8)}:\radius) {};
            \node[circle, fill, inner sep=1.5pt, label=below:1] (v10) at ({360/\n * (9)}:\radius) {};
            \node[circle, fill, inner sep=1.5pt, label=below:2] (v11) at ({360/\n * (10)}:\radius) {};
            \node[circle, fill, inner sep=1.5pt, label=right:2] (v12) at ({360/\n * (11)}:\radius) {};
        
            \draw (v1) -- (v7);            \draw (v2) -- (v12);            \draw (v3) -- (v5);            \draw (v4) -- (v10);            \draw (v6) -- (v9);            \draw (v8) -- (v11);
            
        \end{tikzpicture} 
        \end{center}\\ 
     \hline
     63&  3& 
     \begin{center}
        \begin{tikzpicture} \label{Graph 63}
            \def\n{12} 
            \def\radius{2cm} 
            
            \draw (0,0) circle (\radius);
            \node[circle, fill, inner sep=1.5pt, label=right:1] (v1) at ({360/\n * (0)}:\radius) {};
            \node[circle, fill, inner sep=1.5pt, label=right:3] (v2) at ({360/\n * (1)}:\radius) {};
            \node[circle, fill, inner sep=1.5pt, label=above:3] (v3) at ({360/\n * (2)}:\radius) {};
            \node[circle, fill, inner sep=1.5pt, label=above:1] (v4) at ({360/\n * (3)}:\radius) {};
            \node[circle, fill, inner sep=1.5pt, label=above:2] (v5) at ({360/\n * (4)}:\radius) {};
            \node[circle, fill, inner sep=1.5pt, label=left:2] (v6) at ({360/\n * (5)}:\radius) {};
            \node[circle, fill, inner sep=1.5pt, label=left:1] (v7) at ({360/\n * (6)}:\radius) {};
            \node[circle, fill, inner sep=1.5pt, label=left:3] (v8) at ({360/\n * (7)}:\radius) {};
            \node[circle, fill, inner sep=1.5pt, label=below:3] (v9) at ({360/\n * (8)}:\radius) {};
            \node[circle, fill, inner sep=1.5pt, label=below:1] (v10) at ({360/\n * (9)}:\radius) {};
            \node[circle, fill, inner sep=1.5pt, label=below:2] (v11) at ({360/\n * (10)}:\radius) {};
            \node[circle, fill, inner sep=1.5pt, label=right:2] (v12) at ({360/\n * (11)}:\radius) {};
        
            \draw (v1) -- (v7);            \draw (v2) -- (v12);            \draw (v3) -- (v5);            \draw (v4) -- (v10);            \draw (v6) -- (v8);            \draw (v9) -- (v11);
            
        \end{tikzpicture} 
        \end{center}\\ 
     \hline
     66&  4& 
     \begin{center}
        \begin{tikzpicture} \label{Graph 66}
            \def\n{12} 
            \def\radius{2cm} 
            
            \draw (0,0) circle (\radius);
            \node[circle, fill, inner sep=1.5pt, label=right:3] (v1) at ({360/\n * (0)}:\radius) {};
            \node[circle, fill, inner sep=1.5pt, label=right:3] (v2) at ({360/\n * (1)}:\radius) {};
            \node[circle, fill, inner sep=1.5pt, label=above:1] (v3) at ({360/\n * (2)}:\radius) {};
            \node[circle, fill, inner sep=1.5pt, label=above:1] (v4) at ({360/\n * (3)}:\radius) {};
            \node[circle, fill, inner sep=1.5pt, label=above:2] (v5) at ({360/\n * (4)}:\radius) {};
            \node[circle, fill, inner sep=1.5pt, label=left:2] (v6) at ({360/\n * (5)}:\radius) {};
            \node[circle, fill, inner sep=1.5pt, label=left:3] (v7) at ({360/\n * (6)}:\radius) {};
            \node[circle, fill, inner sep=1.5pt, label=left:3] (v8) at ({360/\n * (7)}:\radius) {};
            \node[circle, fill, inner sep=1.5pt, label=below:1] (v9) at ({360/\n * (8)}:\radius) {};
            \node[circle, fill, inner sep=1.5pt, label=below:1] (v10) at ({360/\n * (9)}:\radius) {};
            \node[circle, fill, inner sep=1.5pt, label=below:2] (v11) at ({360/\n * (10)}:\radius) {};
            \node[circle, fill, inner sep=1.5pt, label=right:2] (v12) at ({360/\n * (11)}:\radius) {};
        
            \draw (v1) -- (v10);            \draw (v2) -- (v5);            \draw (v3) -- (v12);            \draw (v4) -- (v7);            \draw (v6) -- (v9);            \draw (v8) -- (v11);
            
        \end{tikzpicture} 
        \end{center}\\ 
     \hline
     67&  4& 
     \begin{center}
        \begin{tikzpicture} \label{Graph 67}
            \def\n{12} 
            \def\radius{2cm} 
            
            \draw (0,0) circle (\radius);
            \node[circle, fill, inner sep=1.5pt, label=right:3] (v1) at ({360/\n * (0)}:\radius) {};
            \node[circle, fill, inner sep=1.5pt, label=right:1] (v2) at ({360/\n * (1)}:\radius) {};
            \node[circle, fill, inner sep=1.5pt, label=above:2] (v3) at ({360/\n * (2)}:\radius) {};
            \node[circle, fill, inner sep=1.5pt, label=above:3] (v4) at ({360/\n * (3)}:\radius) {};
            \node[circle, fill, inner sep=1.5pt, label=above:1] (v5) at ({360/\n * (4)}:\radius) {};
            \node[circle, fill, inner sep=1.5pt, label=left:2] (v6) at ({360/\n * (5)}:\radius) {};
            \node[circle, fill, inner sep=1.5pt, label=left:3] (v7) at ({360/\n * (6)}:\radius) {};
            \node[circle, fill, inner sep=1.5pt, label=left:1] (v8) at ({360/\n * (7)}:\radius) {};
            \node[circle, fill, inner sep=1.5pt, label=below:2] (v9) at ({360/\n * (8)}:\radius) {};
            \node[circle, fill, inner sep=1.5pt, label=below:3] (v10) at ({360/\n * (9)}:\radius) {};
            \node[circle, fill, inner sep=1.5pt, label=below:1] (v11) at ({360/\n * (10)}:\radius) {};
            \node[circle, fill, inner sep=1.5pt, label=right:2] (v12) at ({360/\n * (11)}:\radius) {};
        
            \draw (v1) -- (v7);            \draw (v2) -- (v8);            \draw (v3) -- (v9);            \draw (v4) -- (v10);            \draw (v5) -- (v11);            \draw (v6) -- (v12);
            
        \end{tikzpicture} 
        \end{center}\\ 
     \hline
     68&  4& 
     \begin{center}
        \begin{tikzpicture} \label{Graph 68}
            \def\n{12} 
            \def\radius{2cm} 
            
            \draw (0,0) circle (\radius);
            \node[circle, fill, inner sep=1.5pt, label=right:1] (v1) at ({360/\n * (0)}:\radius) {};
            \node[circle, fill, inner sep=1.5pt, label=right:3] (v2) at ({360/\n * (1)}:\radius) {};
            \node[circle, fill, inner sep=1.5pt, label=above:2] (v3) at ({360/\n * (2)}:\radius) {};
            \node[circle, fill, inner sep=1.5pt, label=above:1] (v4) at ({360/\n * (3)}:\radius) {};
            \node[circle, fill, inner sep=1.5pt, label=above:1] (v5) at ({360/\n * (4)}:\radius) {};
            \node[circle, fill, inner sep=1.5pt, label=left:2] (v6) at ({360/\n * (5)}:\radius) {};
            \node[circle, fill, inner sep=1.5pt, label=left:3] (v7) at ({360/\n * (6)}:\radius) {};
            \node[circle, fill, inner sep=1.5pt, label=left:1] (v8) at ({360/\n * (7)}:\radius) {};
            \node[circle, fill, inner sep=1.5pt, label=below:2] (v9) at ({360/\n * (8)}:\radius) {};
            \node[circle, fill, inner sep=1.5pt, label=below:3] (v10) at ({360/\n * (9)}:\radius) {};
            \node[circle, fill, inner sep=1.5pt, label=below:3] (v11) at ({360/\n * (10)}:\radius) {};
            \node[circle, fill, inner sep=1.5pt, label=right:2] (v12) at ({360/\n * (11)}:\radius) {};
        
            \draw (v1) -- (v8);            \draw (v2) -- (v7);            \draw (v3) -- (v9);            \draw (v4) -- (v10);            \draw (v5) -- (v11);            \draw (v6) -- (v12);
            
        \end{tikzpicture} 
        \end{center}\\ 
     \hline
     70&  4& 
     \begin{center}
        \begin{tikzpicture} \label{Graph 70}
            \def\n{12} 
            \def\radius{2cm} 
            
            \draw (0,0) circle (\radius);
            \node[circle, fill, inner sep=1.5pt, label=right:3] (v1) at ({360/\n * (0)}:\radius) {};
            \node[circle, fill, inner sep=1.5pt, label=right:3] (v2) at ({360/\n * (1)}:\radius) {};
            \node[circle, fill, inner sep=1.5pt, label=above:1] (v3) at ({360/\n * (2)}:\radius) {};
            \node[circle, fill, inner sep=1.5pt, label=above:2] (v4) at ({360/\n * (3)}:\radius) {};
            \node[circle, fill, inner sep=1.5pt, label=above:2] (v5) at ({360/\n * (4)}:\radius) {};
            \node[circle, fill, inner sep=1.5pt, label=left:1] (v6) at ({360/\n * (5)}:\radius) {};
            \node[circle, fill, inner sep=1.5pt, label=left:1] (v7) at ({360/\n * (6)}:\radius) {};
            \node[circle, fill, inner sep=1.5pt, label=left:2] (v8) at ({360/\n * (7)}:\radius) {};
            \node[circle, fill, inner sep=1.5pt, label=below:3] (v9) at ({360/\n * (8)}:\radius) {};
            \node[circle, fill, inner sep=1.5pt, label=below:3] (v10) at ({360/\n * (9)}:\radius) {};
            \node[circle, fill, inner sep=1.5pt, label=below:1] (v11) at ({360/\n * (10)}:\radius) {};
            \node[circle, fill, inner sep=1.5pt, label=right:2] (v12) at ({360/\n * (11)}:\radius) {};
        
            \draw (v1) -- (v7);            \draw (v2) -- (v5);            \draw (v3) -- (v11);            \draw (v4) -- (v10);            \draw (v6) -- (v9);            \draw (v8) -- (v12);
            
        \end{tikzpicture} 
        \end{center}\\ 
     \hline
     72&  4& 
     \begin{center}
        \begin{tikzpicture} \label{Graph 72}
            \def\n{12} 
            \def\radius{2cm} 
            
            \draw (0,0) circle (\radius);
            \node[circle, fill, inner sep=1.5pt, label=right:3] (v1) at ({360/\n * (0)}:\radius) {};
            \node[circle, fill, inner sep=1.5pt, label=right:1] (v2) at ({360/\n * (1)}:\radius) {};
            \node[circle, fill, inner sep=1.5pt, label=above:2] (v3) at ({360/\n * (2)}:\radius) {};
            \node[circle, fill, inner sep=1.5pt, label=above:3] (v4) at ({360/\n * (3)}:\radius) {};
            \node[circle, fill, inner sep=1.5pt, label=above:1] (v5) at ({360/\n * (4)}:\radius) {};
            \node[circle, fill, inner sep=1.5pt, label=left:1] (v6) at ({360/\n * (5)}:\radius) {};
            \node[circle, fill, inner sep=1.5pt, label=left:3] (v7) at ({360/\n * (6)}:\radius) {};
            \node[circle, fill, inner sep=1.5pt, label=left:2] (v8) at ({360/\n * (7)}:\radius) {};
            \node[circle, fill, inner sep=1.5pt, label=below:1] (v9) at ({360/\n * (8)}:\radius) {};
            \node[circle, fill, inner sep=1.5pt, label=below:3] (v10) at ({360/\n * (9)}:\radius) {};
            \node[circle, fill, inner sep=1.5pt, label=below:2] (v11) at ({360/\n * (10)}:\radius) {};
            \node[circle, fill, inner sep=1.5pt, label=right:2] (v12) at ({360/\n * (11)}:\radius) {};
        
            \draw (v1) -- (v7);            \draw (v2) -- (v9);            \draw (v3) -- (v8);            \draw (v4) -- (v10);            \draw (v5) -- (v12);            \draw (v6) -- (v11);
            
        \end{tikzpicture} 
        \end{center}\\ 
     \hline
     \end{longtable}
\end{center}

Now we move on to all of the other cubic graphs on 12 vertices. We list forbidden subgraphs as pictured in Theorem \ref{thm: cubic on 12} and all the graphs that contain them. Note that Graph 74 is Tietze's Graph.

\begin{center}
\begin{longtable}{|m{2cm} | m{7cm} | } 
     \hline
     Forbidden Subgraph & Graph Numbers, \cite{Bussemaker} \\ 
     \hline
     Bridge& 1-4 \\
     \hline
     Diamond&  5-10 and 17-28\\
     \hline
     Extended Bowtie & 12, 30, 31, 36, 37, 39, 42, 48, 50-52, 56, and 65 \\
     \hline
     $F_1$ & 29, 32, 38, 41, 44, 45, 47, 49, 55, 57, 59, 62, 64, 69, 71, 73, 75-81, and 83\\
     \hline
     $F_2$ & 43\\
     \hline
     $F_3$ & 40, 61, 82, 84, and 85\\
     \hline
     $F_4$ & 54 and 58\\
     \hline
     
 \end{longtable}
\end{center}

\bibliographystyle{abbrvnat}
\bibliography{refs}

\begin{thebibliography}{13}
\providecommand{\natexlab}[1]{#1}
\providecommand{\url}[1]{\texttt{#1}}
\expandafter\ifx\csname urlstyle\endcsname\relax
  \providecommand{\doi}[1]{doi: #1}\else
  \providecommand{\doi}{doi: \begingroup \urlstyle{rm}\Url}\fi

\bibitem[Aram et~al.(2012)Aram, Sheikholeslami, and Volkmann]{TotalDomaticNumber}
H.~Aram, S.~M. Sheikholeslami, and L.~Volkmann.
\newblock On the total domatic number of regular graphs.
\newblock \emph{Trans. Comb.}, 1\penalty0 (1):\penalty0 45--51, 2012.
\newblock ISSN 2251-8657,2251-8665.

\bibitem[Biswas and Das(2022)]{GeneralizedPappus}
S.~Biswas and A.~Das.
\newblock A generalization of {P}appus graph.
\newblock \emph{Electron. J. Graph Theory Appl. (EJGTA)}, 10\penalty0 (1):\penalty0 345--356, 2022.
\newblock ISSN 2338-2287.
\newblock \doi{10.5614/ejgta.2022.10.1.25}.
\newblock URL \url{https://doi.org/10.5614/ejgta.2022.10.1.25}.

\bibitem[Bussemaker et~al.(1977)Bussemaker, \v~Cobelji\'c, Cvetkovi\'c, and Seidel]{Bussemaker}
F.~C. Bussemaker, S.~\v~Cobelji\'c, D.~M. Cvetkovi\'c, and J.~J. Seidel.
\newblock Cubic graphs on {$\leq 14$} vertices.
\newblock \emph{J. Combinatorial Theory Ser. B}, 23\penalty0 (2-3):\penalty0 234--235, 1977.
\newblock ISSN 0095-8956.
\newblock \doi{10.1016/0095-8956(77)90034-x}.
\newblock URL \url{https://doi.org/10.1016/0095-8956(77)90034-x}.

\bibitem[Castagna and Prins(1972)]{CastagnaPrins}
F.~Castagna and G.~Prins.
\newblock Every generalized {P}etersen graph has a {T}ait coloring.
\newblock \emph{Pacific J. Math.}, 40:\penalty0 53--58, 1972.
\newblock ISSN 0030-8730,1945-5844.
\newblock URL \url{http://projecteuclid.org/euclid.pjm/1102968820}.

\bibitem[Chen et~al.(2015)Chen, Kim, Tait, and Verstraete]{Coupon}
B.~Chen, J.~H. Kim, M.~Tait, and J.~Verstraete.
\newblock On coupon colorings of graphs.
\newblock \emph{Discrete Appl. Math.}, 193:\penalty0 94--101, 2015.
\newblock ISSN 0166-218X,1872-6771.
\newblock \doi{10.1016/j.dam.2015.04.026}.
\newblock URL \url{https://doi.org/10.1016/j.dam.2015.04.026}.

\bibitem[Dankelmann and Calkin(2004)]{DomaticNumber}
P.~Dankelmann and N.~Calkin.
\newblock The domatic number of regular graphs.
\newblock \emph{Ars Combin.}, 73:\penalty0 247--255, 2004.
\newblock ISSN 0381-7032,2817-5204.

\bibitem[Freyberg and Marr(2024)]{FreybergMarr}
B.~Freyberg and A.~Marr.
\newblock Neighborhood balanced colorings of graphs.
\newblock \emph{Graphs Combin.}, 40\penalty0 (2):\penalty0 Paper No. 41, 13, 2024.
\newblock ISSN 0911-0119,1435-5914.
\newblock \doi{10.1007/s00373-024-02766-9}.
\newblock URL \url{https://doi.org/10.1007/s00373-024-02766-9}.

\bibitem[Hahn et~al.(2002)Hahn, Kratochv\'il, \v~Sir\'a\v~n, and Sotteau]{InjectiveChromaticNumber}
G.~n. Hahn, J.~Kratochv\'il, J.~\v~Sir\'a\v~n, and D.~Sotteau.
\newblock On the injective chromatic number of graphs.
\newblock \emph{Discrete Math.}, 256\penalty0 (1-2):\penalty0 179--192, 2002.
\newblock ISSN 0012-365X,1872-681X.
\newblock \doi{10.1016/S0012-365X(01)00466-6}.
\newblock URL \url{https://doi.org/10.1016/S0012-365X(01)00466-6}.

\bibitem[Isaacs(1975)]{Isaacs}
R.~Isaacs.
\newblock Infinite families of nontrivial trivalent graphs which are not {T}ait colorable.
\newblock \emph{Amer. Math. Monthly}, 82:\penalty0 221--239, 1975.
\newblock ISSN 0002-9890,1930-0972.
\newblock \doi{10.2307/2319844}.
\newblock URL \url{https://doi.org/10.2307/2319844}.

\bibitem[Li et~al.(2020)Li, Shao, and Zhu]{InjectiveGP}
Z.~Li, Z.~Shao, and E.~Zhu.
\newblock Injective coloring of generalized {P}etersen graphs.
\newblock \emph{Houston J. Math.}, 46\penalty0 (1):\penalty0 1--12, 2020.
\newblock ISSN 0362-1588.

\bibitem[Mann(1965)]{Mann}
H.~B. Mann.
\newblock On linear relations between roots of unity.
\newblock \emph{Mathematika}, 12:\penalty0 107--117, 1965.
\newblock ISSN 0025-5793.
\newblock \doi{10.1112/S0025579300005210}.
\newblock URL \url{https://doi.org/10.1112/S0025579300005210}.

\bibitem[Meringer(1999)]{Meringer}
M.~Meringer.
\newblock Fast generation of regular graphs and construction of cages.
\newblock \emph{J. Graph Theory}, 30\penalty0 (2):\penalty0 137--146, 1999.
\newblock ISSN 0364-9024,1097-0118.
\newblock \doi{10.1002/(SICI)1097-0118(199902)30:2<137::AID-JGT7>3.0.CO;2-G}.

\bibitem[Watkins(1969)]{Watkins}
M.~E. Watkins.
\newblock A theorem on {T}ait colorings with an application to the generalized {P}etersen graphs.
\newblock \emph{J. Combinatorial Theory}, 6:\penalty0 152--164, 1969.
\newblock ISSN 0021-9800.

\end{thebibliography}
\end{document}